\renewcommand{\Im}{\operatorname{Im}}
\renewcommand{\Re}{\operatorname{Re}}
\newcommand{\cala}{{\mathcal A}}
\newcommand{\calb}{{\mathcal B}}
\newcommand{\calf}{{\mathcal F}}
\newcommand{\calm}{{\mathcal M}}
\newcommand{\caln}{{\mathcal N}}
\newcommand{\calr}{{\mathcal R}}
\newcommand{\cals}{{\mathcal S}}
\newcommand{\C}{{\mathbb C}}
\newcommand{\D}{{\mathbb D}}
\newcommand{\M}{{\mathbb M}}
\newcommand{\N}{{\mathbb N}}
\newcommand{\R}{{\mathbb R}}
\newcommand{\Z}{{\mathbb Z}}
\newcommand{\chat}{{\widehat{\mathbb C}}}
\renewcommand{\hat}{\widehat}
\newcommand{\la}{\lambda}
\newcommand{\las}{\lambda^*}
\newcommand{\inv}{^{-1}}
\newcommand{\lo}{{\la_0}}
\newcommand{\ba}{{\bf a}}
\newtheorem{thm}{Theorem}[section]
\newtheorem{cor}[thm]{Corollary}
\newtheorem{prop}[thm]{Proposition}
\newtheorem{lemma}[thm]{Lemma}
\newtheorem{defn}{Definition}
\newtheorem{remark}{Remark}[section]
\def\M{\mathcal M}
\def\N{\mathcal N}
\def\F{\mathcal F}
\def\FT{\mathcal{FT}}
\def\Minf{\mathcal M_{\infty}}
\def\bfa{\mathbf a}
\def\CC{\mathbb C}
\def\ZZ{\mathbb Z}
\def\iota{\mathcal{INV}}
\newtheorem*{thmA}{Theorem A}
\newtheorem*{thmB}{Theorem B}
\newtheorem*{thmC}{Theorem C}
\newtheorem*{propA}{Proposition A}
\newtheorem*{corA}{Corollary A}
\newtheorem*{corB}{Corollary B}
\begin{document}
        \title{ Dynamics of Generalized Nevanlinna Functions}

\author{Tao Chen  and Linda Keen}

\address{}
\email{}

\thanks{}

\subjclass[2010]{37F10, 30D05, 37F30, 30D30}

\begin{abstract}
In the early 1980's, computers made it possible to observe that in complex dynamics, one often sees dynamical behavior reflected in parameter space and vice versa.
    This duality was first exploited by Douady, Hubbard and their students in early work on rational maps.   See \cite{DH,BH}  for example.

Here, we continue to study these ideas in the realm of transcendental functions.

 In \cite{KK1}, it was shown that for  the tangent family, $\lambda \tan z$,  the way the hyperbolic components meet at a point where the asymptotic value eventually lands on infinity reflects the dynamic behavior of the functions at infinity.  In the first part of this paper we show that this duality extends to a much more general class of transcendental meromorphic functions that we call {\em generalized Nevanlinna functions} with the additional property that infinity is not an asymptotic value.
 In particular, we show that in
   ``dynamically natural'' one dimensional slices of parameter space, there are ``hyperbolic-like'' components with a unique distinguished boundary point
 whose  dynamics reflect the behavior inside an asymptotic tract at infinity.  Our main result is  that {\em every} parameter point in such a slice for which  the asymptotic value eventually lands on a pole is such a distinguished boundary point.

In the second part of the paper, we apply this result to the families $\lambda \tan^p z^q$, $p,q \in \mathbb Z^+$, to prove that all hyperbolic components of period greater than $1$ are bounded.
  \end{abstract}

\maketitle
\section*{Introduction}

In the early 1980's, computers made it possible to observe that in complex dynamics, one often sees dynamical behavior reflected in parameter space and vice versa.
    This duality was first exploited by Douady, Hubbard and their students in early work on rational maps.   See \cite{DH,BH}  for example.
     Here, we continue to study these ideas in the realm of transcendental functions.

  In \cite{KK1}, it was shown that for  the tangent family, $\lambda \tan z$,  the way the hyperbolic components meet at a point where the asymptotic value eventually lands on infinity, reflects the dynamic behavior of the functions at infinity.  This is very different from what one sees for the exponential family $\exp z+c$ (see \cite{DFJ, RG, Sch}) where all the hyperbolic components are unbounded.  A crucial difference between these families is that for the exponential family infinity is an asymptotic value whereas for the tangent family it is not.

   In the first part of this paper we define  a much more general class of transcendental meromorphic functions that we call {\em generalized Nevanlinna functions}.   We show that  families of this class with the additional property that infinity is NOT an asymptotic value exhibit this duality; that is, in a one dimensional slice of the parameter space,  the way the hyperbolic components  meet at a point where the asymptotic value eventually lands on infinity reflects the dynamical behavior of the functions at infinity.

To describe these functions and state our theorems, we need some background. The dynamical plane of a meromorphic map is divided into two sets: the {\em Fatou or stable set} on which the iterates are well defined and form a normal family, and the {\em Julia set}, its complement.  The Julia set can be characterized as the closure of the set of repelling periodic points, or equivalently, the closure of the set of pre-poles, points that map to infinity after finite iteration.   A good introduction to meromorphic dynamics can be found in \cite{Ber}.

The points over which a meromorphic function is not a regular covering map are called singular values.  There are two types of singular values: {\em critical values}  (images of  zeroes of $f'$, the {\em critical points}) and {\em asymptotic values} (points $v = \lim_{t \to \infty} f(\gamma(t))$ where $\gamma(t) \to \infty$ as $t\to\infty$).  If an asymptotic value is isolated, the local inverse there is the logarithm.
We denote the class of meromorphic functions with finitely many singular values by $\FT$.  This class is particularly tractable:  families in $\FT$  have parameter spaces
with natural embeddings into $\CC^n$, where $n$ is a simple function of the number of singular values; all asymptotic values are isolated; their Fatou domains have a simple classification because  there are no wandering domains (see Section~\ref{basics} for definitions).

The focus in this paper is on the subclass $\Minf$ of functions in $\FT$ for which infinity is not an asymptotic value.  These
necessarily have infinitely many poles and  this  behavior at infinity has consequences for the dynamical and parameter spaces (see e.g. \cite{BK}).   In particular, it is very different from that for entire functions or transcendental functions with finitely many poles.

 A general principle in dynamics is that each stable dynamical phenomenon is ``controlled'' by a singular point.  For example, each attracting or parabolic periodic cycle always attracts a singular value.
   Using this principle, one can define  one dimensional slices of the parameter space of a family in $\Minf$ that are ``compatible with the dynamics" by keeping all but one of the dynamical phenomena fixed and letting the remaining one, controlled by the  ``free singular value $v$'', vary.    Looking for regions in the slice where the free singular value is attracted to an attracting cycle gives a picture of how the ``hyperbolic-like" components of the slice fit together around the bifurcation locus.

 {\em Dynamically natural} slices were defined in \cite{FK} for families of functions in  $\Minf$.
   It was shown there that for those slices, as for slices of parameter spaces of rational maps,  the bifurcation locus  contains parameters distinguished by functional relations.  There are {\em Misiurewicz points} where $v$ lands on a repelling periodic cycle and    {\em centers} where $v$ lands on a super-attracting cycle.    Another type of distinguished parameter,  not seen for rational maps,  is a {\em virtual cycle parameter}, where $v$ lands on a pole.

Dynamically natural slices contain two different kinds of ``hyperbolic-like'' domains in which the functions are all quasiconformally conjugate on their Julia sets: {\em capture components}, where the free asymptotic value is attracted to one of the fixed phenomena --- an attracting or super-attracting cycle, and {\em shell components},  where the free asymptotic value is the only singular value attracted to an attracting cycle. In a shell component, the period of the attracting cycle is constant and the multiplier map is a well-defined universal covering map onto the punctured disk.   Properties of shell components for general families in $\Minf$  were studied in detail in \cite{FK}.  In particular, it was proved that the boundary of every shell component contains a special point, the {\em virtual center} where the limit of the multiplier map is zero.   One of the main results proved there is

\begin{thmA} For families in $\Minf$, a virtual center on the boundary of a shell component in a dynamically natural slice is a virtual cycle parameter and any virtual cycle parameter on the boundary is a virtual center.
 \end{thmA}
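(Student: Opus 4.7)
My plan is to track the attracting cycle and the orbit of the free asymptotic value $v$ as $\mu$ approaches a boundary point $\mu^*$ of a shell component $\Omega$ in a dynamically natural slice. For $\mu \in \Omega$, there is an analytic family of attracting cycles $\{z_0(\mu), \ldots, z_{p-1}(\mu)\}$ of fixed period $p$ with $v(\mu)$ in the immediate basin as the unique singular value there. The multiplier factors as $\lambda(\mu) = \prod_{i=0}^{p-1} f_\mu'(z_i(\mu))$, and since $v$ is an asymptotic value (not a critical value) and is the only singular value attracted to the cycle, each factor is holomorphic and nonvanishing on $\Omega$.

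For the direction virtual center $\Rightarrow$ virtual cycle parameter, I pick a sequence $\mu_n \to \mu^*$ in $\Omega$ with $\lambda(\mu_n) \to 0$. If the cycle remained in a compact subset of $\CC$, each subsequential limit $z_i^* := \lim z_i(\mu_n)$ would be finite, and the only way $\prod f_{\mu^*}'(z_i^*) = 0$ would be for a critical point of $f_{\mu^*}$ to enter the cycle; this would make $\mu^*$ a classical ``center'' rather than a boundary point of a shell component, a configuration ruled out by the covering structure of $\lambda : \Omega \to \DD \setminus \{0\}$. Hence after reindexing cyclically, some $z_j(\mu_n) \to \infty$, and the relation $z_j(\mu) = f_\mu(z_{j-1}(\mu))$ forces $z_{j-1}(\mu_n) \to p_0$ for some pole $p_0$ of $f_{\mu^*}$. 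Since $v(\mu)$ lies in the immediate basin, its forward orbit accumulates on the cycle for each $\mu \in \Omega$; a normal-families argument applied to the iterates $\mu \mapsto f_\mu^m(v(\mu))$ then shows that at $\mu = \mu^*$ some iterate of $v(\mu^*)$ equals $p_0$, making $\mu^*$ a virtual cycle parameter.

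For the reverse direction, assume $\mu^* \in \partial\Omega$ and $f_{\mu^*}^k(v(\mu^*)) = p_0$ for a pole $p_0$. Continuity of the iterates on a small neighborhood of $\mu^*$ in $\Omega$ gives $f_\mu^k(v(\mu))$ near $p_0$ and $f_\mu^{k+1}(v(\mu))$ very large; since $v(\mu)$ is in the immediate basin of the cycle, these iterates must accumulate on cycle points, which forces the cycle itself to contain a point $z_j(\mu) \to \infty$ with $z_{j-1}(\mu) \to p_0$. The main obstacle is the quantitative estimate on the two degenerating factors. The local form $f_\mu(z) \sim c_\mu/(z-p_0)$ near the pole gives $f_\mu'(z_{j-1}(\mu)) \sim -c_\mu/(z_{j-1}(\mu)-p_0)^2$, blowing up like $|z_{j-1}(\mu)-p_0|^{-2}$, and this must be matched against $f_\mu'(z_j(\mu))$ as $z_j(\mu) \to \infty$. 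The hypothesis that $\infty$ is not an asymptotic value of $f_{\mu^*}$ is precisely what guarantees the required decay of $f_\mu'$ at infinity, in sharp contrast to families like the exponential where the analogous equivalence fails. Once this matching is worked out, combined with boundedness of the remaining cycle factors, one obtains $\lambda(\mu) \to 0$, identifying $\mu^*$ as a virtual center. The delicate asymptotic matching between the pole-neighborhood blow-up and the infinity decay is the principal technical hurdle.
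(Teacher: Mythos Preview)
The present paper does not prove Theorem~A: it is quoted from \cite{FK} and restated as part~(c) of Theorem~\ref{shell components}; the only argument actually given here is the specialized Lemma~\ref{virtcenter}, which proves ``virtual center $\Rightarrow$ virtual cycle parameter'' for the family $\lambda\tan^p z^q$ under the extra hypothesis that the shell component is bounded. Comparing your first direction with that argument exposes a genuine gap. You correctly deduce that some cycle point $z_j(\mu_n)\to\infty$ and hence $z_{j-1}(\mu_n)\to p_0$ for a pole $p_0$, but the line ``a normal-families argument applied to $\mu\mapsto f_\mu^m(v(\mu))$ then shows that at $\mu=\mu^*$ some iterate of $v(\mu^*)$ equals $p_0$'' is not an argument: non-normality of those iterates at a bifurcation parameter carries no information tying the orbit of $v$ to the \emph{particular} pole $p_0$. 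The mechanism in Lemma~\ref{virtcenter} (and in \cite{FK}) looks \emph{forward} from the escaping point rather than backward. One shows that $f_\mu'(z_j)\to 0$ with $z_j\to\infty$ forces $z_j$ to escape \emph{through an asymptotic tract of $v$}, whence $z_{j+1}=f_\mu(z_j)\to v_{\mu^*}$; continuity of the finitely many subsequent iterates then gives $z_{j-1}\to f_{\mu^*}^{k-2}(v_{\mu^*})$, and combining this with $z_{j-1}\to p_0$ yields $f_{\mu^*}^{k-2}(v_{\mu^*})=p_0$ directly, with no normal-families step. Your outline never invokes the asymptotic-tract structure, and without it the orbit of $v$ is not pinned to the degenerating cycle.

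For the reverse direction you have correctly isolated the crux---balancing the blow-up of $f_\mu'$ near the pole against its decay in the tract---but you have not carried it out; the present paper does not either, again deferring to \cite{FK}.
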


In this paper, we concern ourselves with a special subclass in $\Minf$.  We start with the {\em Nevanlinna functions}, the family $\N_r \subset \FT$ of functions with $r>0$ asymptotic values and no critical values.  We next form the family $\tilde\calm_{p,q,r}$ by pre- and post-composing by polynomials of degrees $q$ and $p$ respectively.   We then look at $\calm_{p,q,r} \subset \Minf$, the subset of   functions in $\tilde\calm_{p,q,r}$ all of whose asymptotic values are finite.  We first define the embedding of $\calm_{p,q,r}$ into $\CC^{p+q+r+3}$ and then study shell components of dynamically natural slices for this embedding.  We include the proof, given originally in \cite{FK}, that

\begin{propA}
If $h$ is topologically conjugate to a meromorphic function $f=  P \circ g \circ Q$ in $\tilde\calm_{p,q,r}$, and if $h$ is meromorphic, then it is also in $\tilde\calm_{p,q,r}$;  that is there is a  function $\tilde{g} \in  \mathcal{N}_r $ and  polynomials $\tilde P, \tilde Q$ of degrees $p,q$ respectively such that $h = \tilde{P} \circ \tilde{g} \circ \tilde{Q}$.
\end{propA}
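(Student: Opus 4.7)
The plan is to lift the conjugacy to a quasiconformal one and then use the Measurable Riemann Mapping Theorem (MRMT) to transport the factorization $f=P\circ g\circ Q$ to a parallel factorization of $h$. The first step is to appeal to a standard result for $\FT$ maps (see e.g.\ \cite{FK}): a topological conjugacy between meromorphic functions in $\FT$ can be upgraded to a quasiconformal one. After normalizing so that $\phi(\infty)=\infty$, this gives $h=\phi\circ f\circ\phi^{-1}$ with $\phi$ quasiconformal. Let $\mu:=\mu_\phi$ be its Beltrami coefficient. The composition formula applied to $h=\phi\circ f\circ\phi^{-1}$ together with $\mu_h=0$ forces the invariance relation $f^{*}\mu=\mu$, where $f^{*}$ denotes pullback by the holomorphic map $f$.

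Since $f=P\circ g\circ Q$, this invariance factors as $Q^{*}g^{*}P^{*}\mu=\mu$. I define intermediate Beltrami coefficients by iterated pullback:
\[
\mu_2:=P^{*}\mu,\qquad \mu_1:=g^{*}\mu_2,\qquad \mu_0:=Q^{*}\mu_1=\mu.
\]
Each satisfies $\|\mu_i\|_\infty=\|\mu\|_\infty<1$, since pullback by a holomorphic map preserves $|\mu|$ pointwise. By MRMT, choose quasiconformal self-homeomorphisms $\phi_1,\phi_2$ of $\hat{\C}$ with Beltrami coefficients $\mu_1,\mu_2$, normalized to fix $\infty$. Set
\[
\tilde{Q}:=\phi_1\circ Q\circ\phi^{-1},\qquad \tilde{g}:=\phi_2\circ g\circ\phi_1^{-1},\qquad \tilde{P}:=\phi\circ P\circ\phi_2^{-1}.
\]
A routine calculation with the composition rule shows that each of these has vanishing Beltrami coefficient and is therefore meromorphic, while the factors telescope to $\tilde{P}\circ\tilde{g}\circ\tilde{Q}=\phi\circ f\circ\phi^{-1}=h$.

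It then remains to identify the types of the three factors. The map $\tilde{Q}$ is holomorphic on $\C$ with $\infty$ as its unique preimage of $\infty$ (since $\phi,\phi_1$ fix $\infty$ and $Q^{-1}(\infty)=\{\infty\}$) and has topological degree $q$, so it is a polynomial of degree $q$. The same argument identifies $\tilde{P}$ as a polynomial of degree $p$. For $\tilde{g}$, conjugation by quasiconformal maps preserves the combinatorial structure of singular values, so $\tilde{g}$ is meromorphic on $\C$ with exactly $r$ asymptotic values and no critical values, placing it in $\mathcal{N}_r$. I expect the principal technical obstacle to be the first step---upgrading topological conjugacy to quasiconformal conjugacy within $\FT$---as the MRMT-based construction and the verification of degrees and singular-value data are essentially routine once that is in hand.
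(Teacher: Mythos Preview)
Your proposal is correct and follows essentially the same MRMT-based straightening argument as the paper: pull back $\mu$ through the factors $P$, $g$, $Q$, integrate the intermediate Beltrami coefficients, and sandwich each factor between the resulting quasiconformal maps. One small remark: your phrase ``conjugation by quasiconformal maps'' for $\tilde g=\phi_2\circ g\circ\phi_1^{-1}$ is imprecise, since $\phi_1\neq\phi_2$---the paper is careful to point out that $\tilde g$ is \emph{not} a conjugate of $g$ but merely has the same topological covering structure because $\phi_1,\phi_2$ are homeomorphisms, which is all that is needed to invoke Corollary~\ref{cor:Nev}; also, where you use the invariance $f^*\mu=\mu$ to see directly that $\tilde Q$ is holomorphic, the paper instead deduces this a posteriori from the fact that $h=\tilde P\circ\tilde g\circ\tilde Q$ is meromorphic and $\tilde P\circ\tilde g$ is already known to be meromorphic---these are equivalent.
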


A corollary  is

\begin{corA}  There is a natural embedding of $\tilde\calm_{p,q,r}$ into $C^{p+q+r+4}$ and hence an embedding of $\calm_{p,q,r}$ into $C^{p+q+r+3}$.
\end{corA}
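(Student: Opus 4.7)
The plan is to count parameters for each factor in the decomposition $f = P \circ g \circ Q$ and then assemble them. A degree-$p$ polynomial $P$ is determined by its $p+1$ coefficients and so embeds naturally in $\C^{p+1}$; likewise $Q$ of degree $q$ embeds in $\C^{q+1}$. For the middle factor $g \in \N_r$, Nevanlinna's classical theorem identifies $g$ (up to post-composition by a M\"obius transformation) with its Schwarzian derivative, which is a polynomial of degree $r-2$ and hence has $r-1$ complex coefficients. Together with the $3$-dimensional M\"obius freedom, this yields a natural embedding $\N_r \hookrightarrow \C^{r+2}$. Assembling the three factor parameterizations gives a map $\C^{p+q+r+4} \to \tilde\calm_{p,q,r}$ sending $(P,g,Q) \mapsto P \circ g \circ Q$.

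The role of Proposition A is to upgrade this map to a genuine embedding. Applied with the identity conjugacy, it tells us that every $f \in \tilde\calm_{p,q,r}$ really does admit a decomposition of the required form, and more generally it pins down that decomposition as an intrinsic feature of $f$. After fixing a standard normalization --- for instance, making $P$ and $Q$ monic with vanishing sub-leading coefficients, and normalizing the M\"obius freedom in the Nevanlinna factor --- the triple $(P, g, Q)$ is uniquely determined by $f$. This yields the embedding $\tilde\calm_{p,q,r} \hookrightarrow \C^{p+q+r+4}$.

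For the embedding of $\calm_{p,q,r}$, impose the additional constraint that $\infty$ is not an asymptotic value of $f$. Since the asymptotic values of $P \circ g \circ Q$ are exactly $\{P(a_j)\}$ as $a_j$ ranges over the asymptotic values of $g$, and $P$ is a polynomial, $\infty$ is an asymptotic value of $f$ precisely when some $a_j = \infty$. In the natural Nevanlinna parameterization $\N_r \hookrightarrow \C^{r+2}$, one of the M\"obius parameters is used to record the location in $\chat$ of a distinguished asymptotic value; requiring it to lie in $\C$ rather than all of $\chat$ removes exactly one complex parameter, reducing the total count to $p+q+r+3$ and yielding $\calm_{p,q,r} \hookrightarrow \C^{p+q+r+3}$.

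The main obstacle is verifying uniqueness of the decomposition. The identity $P \circ g \circ Q = (P \circ L^{-1}) \circ (L \circ g \circ (L')^{-1}) \circ (L' \circ Q)$ for affine $L, L'$ shows that affine factors can be shuffled between the three slots (the middle factor remains in $\N_r$ because the class is closed under M\"obius pre- and post-composition), producing a finite-dimensional gauge ambiguity. One must exhibit a normalization that cuts each gauge orbit transversally in a single point and accounts for precisely the quoted dimensions. Proposition A is the structural input that makes this tractable: because it guarantees that the decomposition data is canonically attached to $f$, it rules out the a priori possibility that the same function $f$ could be written as $P \circ g \circ Q$ in two genuinely inequivalent normalized ways.
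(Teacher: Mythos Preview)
Your parameter count is exactly the paper's: $(p+1)+(q+1)+(r+2)=p+q+r+4$, with the Nevanlinna factor accounted for via the $r-1$ Schwarzian coefficients plus three M\"obius parameters (the paper phrases this via the two-dimensional solution space of $w''+\tfrac12 S_g w=0$, but the content is identical), and the passage to $\calm_{p,q,r}$ by removing the single parameter that allows $\infty$ as an asymptotic value. At this level the two arguments coincide.

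Where you diverge is in trying to justify that the parameterization is an honest embedding. Two remarks. First, the paper's proof is \emph{only} the dimension count; it never addresses the gauge redundancy you identify and does not invoke Proposition~A in the argument, despite labeling the statement a corollary. So you are being more scrupulous than the paper. Second, however, your attempt to use Proposition~A here does not work: Proposition~A says that a meromorphic function topologically conjugate to some $f\in\tilde\calm_{p,q,r}$ again lies in $\tilde\calm_{p,q,r}$. Applied with the identity this is vacuous, and in general it says nothing about whether two decompositions $P\circ g\circ Q=\tilde P\circ\tilde g\circ\tilde Q$ of the \emph{same} function must be affinely related. That uniqueness (modulo the affine gauge you describe) is a separate fact, and Proposition~A is not the tool that delivers it.

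There is also an inconsistency in your normalization: making $P$ and $Q$ monic with vanishing sub-leading coefficient already consumes the full four-dimensional affine gauge $(L,L')$, so you cannot \emph{also} normalize the M\"obius freedom in $g$ without over-determining. With your normalization the honest count is $(p-1)+(q-1)+(r+2)=p+q+r$, not $p+q+r+4$; this still yields the stated embedding into the larger space, but the reasoning as written conflates the raw and normalized counts.
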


Theorem A does not preclude the existence of virtual cycle parameters that are not on the boundary of a shell component. For example, such a parameter might be  buried in the bifurcation locus.
Our first new theorem says this cannot happen.

\begin{thmB} In a dynamically natural slice in $\calm_{p,q,r}$, every virtual cycle parameter lies on the boundary of a shell component.
\end{thmB}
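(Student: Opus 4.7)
The plan is to construct, for each virtual cycle parameter $\lambda_0$, a shell component $\Omega$ with $\lambda_0 \in \partial\Omega$. Let $k \geq 1$ be the least integer with $f_{\lambda_0}^{k}(v(\lambda_0)) = \infty$, so that $w(\lambda_0) := f_{\lambda_0}^{k-1}(v(\lambda_0))$ is a pole $p_0$ of $f_{\lambda_0}$. By the implicit function theorem the pole persists holomorphically as $p(\lambda)$, and $w(\lambda) = f_\lambda^{k-1}(v(\lambda))$ is holomorphic on a neighborhood $U$ of $\lambda_0$ with $w(\lambda_0) = p_0$. Put $\delta(\lambda) = w(\lambda) - p(\lambda)$, which vanishes (generically to first order) at $\lambda_0$.

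For $\lambda \in U$ I seek a period-$(k+1)$ point of $f_\lambda$ of the form $\zeta = v(\lambda) + \epsilon$. Writing out $f_\lambda^{k+1}(\zeta) = \zeta$ using the simple-pole normal form of $f_\lambda$ at $p(\lambda)$, the logarithmic asymptotic-tract normal form of $f_\lambda$ inside the tract $T_\lambda$ associated to $v(\lambda)$, and the bounded, non-degenerate behavior of $f_\lambda^{k-1}$ along the regular portion of the orbit $v(\lambda) \to w(\lambda)$, the fixed-point equation reduces to a transcendental relation of the schematic form
\[
\epsilon \;=\; \exp\!\Bigl(-\tfrac{c(\lambda)}{\delta(\lambda) + c_1(\lambda)\,\epsilon + O(\epsilon^2)}\Bigr).
\]
For $\lambda$ in the region of $U\setminus\{\lambda_0\}$ in which $\Re(c(\lambda)/\delta(\lambda))$ is large and positive, the right-hand side is a strict contraction of a small disk, and the Banach fixed-point theorem produces a unique holomorphic $\epsilon(\lambda)\to 0$ as $\lambda\to\lambda_0$ in that region, giving a periodic point $\zeta(\lambda)$ of period exactly $k+1$.

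A chain-rule calculation using the same normal forms, together with the fixed-point relation $\delta + c_1\epsilon \sim -c/\log\epsilon$ to eliminate $\delta + c_1\epsilon$, yields
\[
|\mu(\lambda)| \;=\; |(f_\lambda^{k+1})'(\zeta(\lambda))| \;\sim\; |\epsilon(\lambda)|\,(\log|\epsilon(\lambda)|)^2 \;\longrightarrow\; 0 \quad\text{as } \lambda\to\lambda_0,
\]
so $\zeta(\lambda)$ is an attracting cycle on its whole domain of existence. By construction $v(\lambda)$ lies in the immediate basin of this cycle, and the dynamical naturality of the slice prevents any other singular value from participating. Hence the connected component $\Omega$ of the parameter set on which this cycle persists and attracts $v(\lambda)$ is a shell component of period $k+1$ in the sense of \cite{FK}. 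Since no such cycle exists at $\lambda_0$ itself, we have $\lambda_0 \in \partial\Omega$, and Theorem~A identifies $\lambda_0$ as the virtual center of $\Omega$.

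The main obstacle is the rigorous analysis of the transcendental fixed-point equation: replacing the heuristic normal forms by their genuine, remainder-controlled versions, verifying uniform contraction on explicit disks, and ensuring that the intermediate iterate $f_\lambda^k(\zeta)$ actually enters the asymptotic tract $T_\lambda$ rather than escaping elsewhere --- a constraint that restricts the admissible direction of approach and is what makes the explicit construction cover only part of $\Omega$ near $\lambda_0$. A secondary issue is checking that $\Omega$ is a shell rather than capture component, which follows from the dynamical naturality of the slice: no singular value other than $v(\lambda)$ is free to be absorbed into the newly created basin.
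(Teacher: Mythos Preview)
Your approach is genuinely different from the paper's and, at the level of a sketch, essentially sound. The paper proceeds geometrically: for a fixed $\lambda$ in a carefully chosen sector $\hat S_V$ near $\las$, it builds a triangular region $\tilde{\mathbb T}$ near $v_\lambda$ with a vertex at the preimage $u_\lambda$ of the pole and containing $v_\lambda$, lifts it to $\mathbb T = f_\lambda^{-1}(\tilde{\mathbb T})$ inside the asymptotic tract $A_\lambda$, and then uses Koebe distortion together with hyperbolic-metric estimates on $A_\lambda$ to show $f_\lambda^k(\mathbb T)\subset\mathbb T$, concluding by the Schwarz lemma. You instead write the periodic-point equation near $v(\lambda)$ in normal-form coordinates and solve it by the Banach contraction principle, then compute the multiplier directly. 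Both arguments locate the same cycle (your $\zeta$ is the paper's $a_1$, its tract point is the paper's $a_0$); yours yields explicit asymptotics for the multiplier, while the paper's avoids the bookkeeping of remainder terms by working with invariant domains rather than coordinates.

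There is, however, one concrete gap you should repair. In $\calm_{p,q,r}$ with $p>1$ the poles of $f_\lambda = P\circ g\circ Q$ are \emph{not} simple: since $g\in\caln_r$ has no critical points its poles are simple, but post-composition with the degree-$p$ polynomial $P$ makes a generic pole of $f_\lambda$ have order $p$ (and higher still at critical points of $Q$). Your schematic equation $\epsilon = \exp\bigl(-c/(\delta+c_1\epsilon+\cdots)\bigr)$ and the estimate $|\mu|\sim|\epsilon|(\log|\epsilon|)^2$ are written for a simple pole; for a pole of order $m$ the relation becomes $\epsilon\approx C\exp\bigl(-\alpha c/(\delta+c_1\epsilon)^m\bigr)$, with a choice among $m$ branches corresponding to the $m$ pre-asymptotic sectors at the pole, and the multiplier scales like $|\epsilon|\,|\log\epsilon|^{(m+1)/m}$. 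The paper handles general $m$ explicitly via the estimate $|f_\lambda^{-1}(y)-p_\lambda|\sim K_3/|\log\eta_\lambda|^{1/m}$. Your contraction argument survives this modification, but as written it covers only $p=1$. Two minor remarks: the parenthetical ``generically to first order'' for the vanishing of $\delta$ is not needed, since neither argument uses transversality, only that $\lambda$ approach $\lambda_0$ through an appropriate sector; and the shell-versus-capture verification is indeed implicit in the paper as well, following because the dynamically fixed singular values are, by definition of the slice, already committed to attractors of constant period and multiplier and so cannot be absorbed by the new basin.
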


Corollary B, which follows directly from  Theorems  A  and B, says that in $\calm_{p,q,r}$ the notions of virtual center and virtual cycle parameter are equivalent.

\begin{corB}
 In a dynamically natural slice of  $\calm_{p,q,r}$,  every virtual cycle parameter is a virtual center and vice versa.
 \end{corB}

   In \cite{FK} it was proved that  for families in $\Minf$, shell components of period $1$ in dynamically natural slices  are always unbounded and it was conjectured that, in contrast to families of entire functions,  those of period greater than $1$ are always bounded.   The conjecture was proved true in the tangent family in \cite{KK1}.  In this paper we prove it for the generalization of the tangent family,  $\calf=\la\tan^p(z^q)$.  This is a  subfamily of $\calm_{p,q,r}$.

\begin{thmC}
Every shell component of period greater than $1$ in the $\lambda$ plane of the family $\calf$ is bounded.
\end{thmC}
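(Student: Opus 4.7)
The plan is to argue by contradiction. Suppose $\Omega$ is an unbounded shell component of $\calf$ of period $n\geq 2$, and pick a sequence $\lambda_k\in\Omega$ with $\lambda_k\to\infty$. For each $k$, let $C_k=\{z_{k,1},\dots,z_{k,n}\}$ be the attracting cycle with multiplier $\rho_k=(f^n_{\lambda_k})'(z_{k,1})\in\D^*$, and let $v_k=\pm\lambda_k i^p$ be the free asymptotic value, which lies in the immediate basin of $C_k$. Passing to a subsequence, $z_{k,j}\to z^*_j\in\hat{\CC}$ for each $j$. We split into two cases depending on whether $z^*_j$ is finite for every $j$.

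Suppose first that every $z^*_j\in\CC$. Writing $f_\lambda(z)=\lambda g(z)$ with $g(z)=\tan^p(z^q)$ independent of $\lambda$, the cycle equation $z_{k,j+1}=\lambda_k g(z_{k,j})$ combined with the boundedness of the left side as $\lambda_k\to\infty$ forces $g(z^*_j)=0$ for each $j$. Every zero of $g$ is mapped to $0$ by $f_\lambda$, and $0$ is a fixed point of $f_\lambda$, so iterating the limit gives $z^*_j=0$ for all $j$. Since $g(z)\sim z^{pq}$ near $0$, the cycle relation reduces to $z_{k,j+1}\sim\lambda_k z_{k,j}^{pq}$ and the derivative satisfies $f'_{\lambda_k}(z_{k,j})\sim pq\,\lambda_k z_{k,j}^{pq-1}=pq\cdot z_{k,j+1}/z_{k,j}$. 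Multiplying around the cycle, the product telescopes to give $\rho_k\to(pq)^n$. Since the tangent case $(p,q)=(1,1)$ is treated in \cite{KK1}, we may assume $pq\geq 2$, whence $(pq)^n\geq 4$, contradicting $|\rho_k|<1$.

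Now suppose some $z^*_j=\infty$. The asymptotic tracts of $f_\lambda$ are the $2q$ sectoral preimages of the half-planes $\Im w>0$ and $\Im w<0$ under $z\mapsto z^q$; inside any such tract, in logarithmic coordinates, $f_\lambda(z)$ converges to its asymptotic value exponentially fast, and a direct computation shows that $|f'_\lambda(z)|$ decays exponentially as $|\Im z^q|\to\infty$. After extracting a subsequence, each escaping cycle point $z_{k,j}$ sits deep inside a fixed asymptotic tract with $|z_{k,j}|$ comparable to $|\lambda_k|$, so $|f'_{\lambda_k}(z_{k,j})|\to 0$ and hence $|\rho_k|\to 0$. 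In other words, the point $\infty\in\partial\Omega$ behaves like a virtual center for $\Omega$. But by Corollary~B, every virtual center of a shell component in $\calm_{p,q,r}$ is a virtual cycle parameter, and the latter are finite points of $\CC$ where $v_\lambda$ lands on a pole of $f_\lambda$. This contradiction completes the proof.

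The main obstacle is Case~2. One must verify that an escaping cycle point really enters a genuine asymptotic tract (rather than drifting along a tract boundary, where $\tan$ has accumulating poles and the exponential decay of $f'$ breaks down), and one must convert the interior statement $|\rho_k|\to 0$ at $\infty$ into a contradiction with Corollary~B, whose formal statement concerns only finite boundary points. The natural route is to approximate $\infty$ by a sequence of finite virtual centers on $\partial\Omega$ using the covering property of the multiplier map $\rho:\Omega\to\D^*$; the bounded-cycle case in contrast reduces cleanly to the telescoping multiplier computation above.
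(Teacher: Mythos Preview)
Your approach is genuinely different from the paper's, and as you yourself flag, Case~2 is where it breaks down.

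First, a small point about Case~1. The sentence ``iterating the limit gives $z^*_j=0$ for all $j$'' is not a valid deduction: there is no limiting dynamical system here since $\lambda_k\to\infty$, so you cannot iterate anything. What is true is that each $z^*_j$ is a zero of $g$, hence $(z^*_j)^q=m_j\pi$ for some integer $m_j$. If some $m_j\neq 0$, then $2(z^*_j)^q$ is a nonzero pole of $u/\sin u$, and the multiplier formula $\rho_k=(2pq)^n\prod_j z_{k,j}^q/\sin(2z_{k,j}^q)$ forces $|\rho_k|\to\infty$, a contradiction. So in fact all $z^*_j=0$, and then your telescoping computation is fine. This is easily repaired.

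Case~2 is the real problem, and neither of your suggested patches works. The appeal to Corollary~B is circular: Corollary~B (and Theorem~\ref{shell components}(c)) only asserts that \emph{finite} virtual centers coincide with virtual cycle parameters; nothing in the paper rules out infinity as the virtual center of a higher-period component --- that is precisely Theorem~C. Your ``natural route'' of approximating $\infty$ by a sequence of finite virtual centers on $\partial\Omega$ is blocked by Theorem~\ref{shell components}(b): each shell component has a \emph{unique} virtual center, so there is no such sequence. And the assertion that an escaping cycle point ``sits deep inside a fixed asymptotic tract'' is exactly what fails to be automatic: an escaping $z_{k,j}$ can drift along a Julia ray where $\Im z_{k,j}^q$ stays bounded, $\tan^p(z_{k,j}^q)$ does not converge to an asymptotic value, and $f'_{\lambda_k}$ has no decay. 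The paper's Lemma~\ref{centers of cals2} handles exactly this issue, but only for period~$2$, by a delicate argument comparing $z_0^q$ and $z_1^q$; extending that directly to arbitrary period $n$ is not attempted.

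The paper's proof is structurally different. It first proves (Lemma~\ref{centers of cals2}) that every period-$2$ shell component is bounded. Then it uses the explicit geometry of the $2q$ unbounded period-$1$ components $\Omega_k^\pm$ (Theorem~\ref{comps of cals1}) together with the period-$2$ bud components budding off them at parabolic boundary points $\mu=-1$: adjacent period-$1$ components share virtual centers of their bud components on the separating rays, so one can thread a simple closed curve $\gamma_m$ around the origin through period-$1$ and period-$2$ components. Any shell component of period $>1$ is disjoint from these and hence trapped inside some $\gamma_m$. The argument is topological rather than a direct multiplier estimate.
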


The paper is organized as follows.   Part~\ref{part1} is a discussion of the general family $\calm_{p,q,r}$ and dynamically natural slices of its parameter space.  In Section~\ref{basics} we give a brief overview of the basic theory, set our notation and discuss the theorem of Nevanlinna, Theorem~\ref{thm:Nev}, which we use to define the class $\N_r$ of Nevanlinna functions.  Next, in Section~\ref{Mpqr}, we define the classes $\tilde\calm_{p,q,r}$ and $\calm_{p,q,r}$, and prove Proposition~A and Corollary~A.  In Section~\ref{dynamics} we  recall  the properties of Fatou components that we need and define dynamically natural slices of parameter spaces and the  shell components in these slices.    In Section~\ref{shell}, we define virtual cycle parameters and virtual centers, state Theorem~A and prove Theorem~B.   Part~\ref{part2} is a discussion of the special symmetric subfamily $\calf \subset \calm_{p,q,2}$.  In Section~\ref{tanpq} we classify the shell components by period and discuss the special properties of the components of periods $1$ and $2$.   Finally, in Section~\ref{thmC} we prove Theorem~C.

\part{The family $\M_{p,q,r}$} \label{part1}
\section{Basics and Tools}\label{basics}
 \subsection{Meromorphic functions.}
  In this paper,  unless we specifically say otherwise, we always assume that an entire or meromorphic map is transcendental and so has infinite degree.  If we mean a map of  finite degree we  call it polynomial or rational.  We also always assume infinity is an essential singularity.   We need the following definitions:

 \begin{defn}\label{sing pts} A point $v\in\C$ is called a {\em singular value}\footnote{In the literature these are sometimes called singularites of $f^{-1}$.} of $f$ if, for some small neighborhood of $v$, some branch of $f\inv$  is not well defined.   If $c$ is  a zero of $f'$, it is a critical point and its image $v=f(c)$ is a  {\em critical value}. The branch with $f^{-1}(v)=c$ is not well defined so a critical value is singular.   A point is also singular if there is a path $\gamma(t)$ such that $\lim_{t \to \infty} \gamma(t)=\infty$ and $\lim_{t \to \infty} f(\gamma(t))=v$.  The limit $v$ is singular and called an {\em asymptotic value}  of $f$.     Singular values may be critical, asymptotic or accumulations  of such points.   We denote the set of singular points  of $f$ by $\mathcal{S}_f$.
 \end{defn}

 If  an asymptotic value $v$ is isolated, and $\gamma$ is an asymptotic path for $v$, we can find a nested sequence of neighborhoods of $v$, $D_{r,v}$ with $r\to 0$,  and a particular branch $g$ of $f^{-1}$ such that $V_r=g(D_{r,v})$ is a nested sequence of neighborhoods containing $\gamma$ and $\cap g(D_{r,v}) = \emptyset$.   Then $r$ can be chosen small enough so that the  map $f: V_r \rightarrow D_{r,v} \setminus \{v\}$ is a universal covering map.  In this case $V_r$ is called an {\em asymptotic tract} for the asymptotic value $v$ and $v$ is called a {\em logarithmic singular value}.  The number of distinct asymptotic tracts of a given asymptotic value is called its {\em multiplicity}\footnote{Note this notion is not the same as multiplicity of a critical point.}.

 \begin{defn}\label{Julia} A ray $\beta$ approaching infinity is called a {\em Julia ray} or {\em Julia direction} for the meromorphic function $f$ if $f$ assumes all (but at most two) values infinitely often in any sector containing $\beta$.
 \end{defn}

An example to keep in mind is $e^z$ with asymptotic values at $0$ and $\infty$.  The asymptotic tracts are the left and right half planes and the Julia directions are parallel to the positive and negative imaginary axes.    Another example is $\tan z$ with asymptotic values $\pm i$, asymptotic tracts the upper and lower half planes and Julia directions parallel to the positive and negative real axes.

In this paper, whenever we talk about the number of critical points and/or asymptotic values, we tacitly assume that we count with multiplicity.

An entire function $f$ always has an asymptotic value at infinity.

We will only be interested in meromorphic functions with $\# \mathcal{S}_f < \infty$.  These are called functions of {\em finite type}.    Note that all the asymptotic values of a finite type function are isolated and so are logarithmic.

\subsubsection{  Nevanlinna Functions}
  Nevanlinna, in  \cite{Nev, Nev1}, Chap X1, characterized families of
meromorphic functions with finitely many asymptotic values, finitely many critical points  and a single essential singularity at infinity.    (See \cite{DK, KK1,EreGab} for further discussion.)

  Recall that the Schwarzian derivative of a function $g$ is defined by
 \begin{equation}\label{schw}
 S(g)=(g''/g')' - \frac{1}{2}(g''/g')^{2}.
 \end{equation}
Because  Schwarzian derivatives satisfy the cocycle relation
\[  S(f \circ g)(z) = S(f)(g'(z))^2 + S(g)(z) \]
and the Schwarzian derivative of a M\"obius transformation is zero,  solutions to equation~(\ref{schw}) are determined only up to
post-composition by a M\"obius transformation.

     Nevanlinna's theorem says
\begin{thm}[Nevanlinna]  \label{thm:Nev}
Every meromorphic function $g$ with   $p < \infty$ asymptotic values and $q < \infty$ critical points  has the property that its Schwarzian derivative is a rational function of degree $p +q -2$.
If $q=0$, the Schwarzian derivative is a polynomial $P(z)$.
In the opposite direction,  for every polynomial function $P(z)$ of degree $ p-2$,  the solution to
the Schwarzian differential equation $S(g)=P(z)$ is a meromorphic function with exactly $p$ asymptotic values and no critical points.
\end{thm}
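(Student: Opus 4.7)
The plan is to translate the Schwarzian equation $S(g)=Q$ into a linear second-order ODE and read off the structure of $Q$ from the local behavior of $g$. Recall that if $w_1,w_2$ are linearly independent solutions of $w''+(Q/2)w=0$, then $g=w_1/w_2$ satisfies $S(g)=Q$; conversely, every meromorphic function with Schwarzian $Q$ arises this way up to post-composition by a Möbius transformation. This correspondence is the engine of both directions of the theorem, and it fits the invariance property of $S$ under Möbius post-composition noted after equation~(\ref{schw}).

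First I would pin down the local form of $S(g)$ at each critical point. Writing $g(z)-g(z_0)=c(z-z_0)^k+O((z-z_0)^{k+1})$ with $k\geq 2$, a direct computation gives
\[ S(g)(z)=\frac{1-k^2}{2(z-z_0)^2}+O\!\left((z-z_0)^{-1}\right), \]
so each critical point contributes a double pole to $S(g)$ at the critical point itself (not at the critical value), and $S(g)$ is holomorphic on $\mathbb{C}$ away from these $q$ points. To promote this to rationality I would use that every asymptotic value is logarithmic (automatic for finite singular set): inside an asymptotic tract a logarithmic change of coordinates reduces $g$ to the exponential model $v+e^{\varphi(z)}$, from which one reads off polynomial growth of $S(g)$ along the tract. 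Combined with the finiteness of singular values, this forces $S(g)$ to be a rational function with polynomial principal part at $\infty$.

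The degree count is the heart of the argument, and I would carry it out via Stokes/WKB analysis of the linear ODE $w''+(Q/2)w=0$. If the polynomial part of $Q$ at infinity has degree $d$, then near $\infty$ the equation has $d+2$ Stokes sectors of opening $2\pi/(d+2)$; in each sector one has a dominant and a subdominant solution with WKB asymptotics
\[ w_\pm(z)\sim Q(z)^{-1/4}\exp\!\left(\pm\int\sqrt{-Q(z)/2}\,dz\right), \]
and in each sector the ratio $g=w_1/w_2$ tends to a definite limit in $\widehat{\mathbb{C}}$, which is realized as an asymptotic value along an asymptotic tract contained in that sector. Counting these tracts gives $p=d+2$ from infinity, while the $q$ double poles of $Q$ at the critical points give the remaining pole contributions to the degree, so $\deg S(g)=p+q-2$. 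Specializing to $q=0$ gives the polynomial case.

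For the converse, given a polynomial $P$ of degree $p-2$, the ODE $w''+(P/2)w=0$ has two linearly independent \emph{entire} solutions $w_1,w_2$ with nonzero constant Wronskian $W$, so $g=w_1/w_2$ is meromorphic, and the identity $g'=W/w_2^2$ shows $g$ has no critical points. Applying the Stokes analysis above with $d=p-2$ produces exactly $p$ asymptotic tracts at $\infty$, and hence $p$ asymptotic values (counted with multiplicity), closing the equivalence. I expect the main obstacle to be the third step: justifying that the WKB asymptotics survive the crossing of anti-Stokes lines and that a single limit value is attained throughout each Stokes sector. The rest is bookkeeping via the ODE correspondence and the local Schwarzian computation.
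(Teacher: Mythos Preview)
The paper does not actually prove Theorem~\ref{thm:Nev}; it is quoted as a classical result of Nevanlinna with references to \cite{Nev,Nev1,DK,KK1,EreGab}, and only Remark~\ref{truncated solutions} sketches the ideas. So the comparison is between your outline and that remark.

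For the converse direction your approach and the paper's indication coincide: both reduce $S(g)=P$ to the linear equation $w''+(P/2)w=0$ and read off the asymptotic values from the sectorial asymptotics of solutions. What the remark calls the ``truncated solutions'' $g_k$ with $\log g_k(z)\sim(-1)^{k+1}z^{p/2}$ in sectors of opening $2\pi/p$ is exactly your WKB/Stokes picture, so your identification of $p$ tracts and the absence of critical points via $g'=W/w_2^2$ matches the intended argument. Your caveat about justifying the asymptotics across anti-Stokes lines is the right technical flag; the paper simply defers this to \cite{Hille,Nev}.

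For the forward direction the routes diverge. The remark says Nevanlinna obtains rationality of $S(g)$ by realizing $g$ as a limit of holomorphic functions whose Schwarzians are rational of bounded degree (this is the classical line-complex/approximation machinery). You instead attempt a direct local-to-global argument: double poles of $S(g)$ at critical points, then polynomial behavior at $\infty$ from an exponential model in each tract. The local computation is fine, but the step ``exponential model in a tract gives polynomial growth of $S(g)$, hence $S(g)$ is rational'' is where your sketch is thin: writing $g=v+e^{\varphi}$ in a tract with $\varphi$ merely holomorphic does not by itself force polynomial growth of $S(g)=\varphi'''/\varphi'-\tfrac32(\varphi''/\varphi')^2-\tfrac12(\varphi')^2$, and you still need control along the Julia directions between tracts, not just inside them. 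This is precisely the place where Nevanlinna's approximation argument, or an order-of-growth argument tying the finiteness of singular values to finite order of $g$, does real work. If you want to keep your direct approach, that gap is the step to fill.
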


\begin{remark}\label{truncated solutions} The proof of the first part of this theorem involves the construction of the function as a limit of  holomorphic functions whose Schwarzians are rational of bounded degree.   The proof of the second part of the  theorem involves understanding the asymptotic properties of solutions to the equation $S(g)=P(z)$.  In particular, there are exactly $p$ ``truncated solutions'' $g_0, \ldots, g_{p-1}$ that, for any $\epsilon >0$, have asymptotic developments of the form
\[ \log g_k(z) \sim (-1)^{k+1} z^{p/2} \]
defined in the sector $|\arg z - 2\pi k/p | < 3 \pi/p - \epsilon$.  Each $g_k$ is entire and  tends to zero as $z$ tends to infinity along each ray of the sector $|\arg z - 2\pi k/p | <  \pi/p$ and tends to infinity in the adjacent sectors.   The rays separating the sectors are the Julia rays for $g(z)$.   See \cite{Hille, Nev} for details.
\end{remark}

\begin{defn} We   denote the family of meromorphic functions with $p<\infty$ asymptotic values and no critical values by $\mathcal{N}_p$ and call the functions {\em Nevanlinna functions}. \end{defn}

One immediate corollary is that Nevanlinna functions cannot have exactly one asymptotic value.  Moreover, since polynomials of degree $p-2$ depend on their $p-1$ coefficients, and the solutions are determined up to post-compostion by M\"obius transformations which depend on three coefficents,
another  immediate corollary is
\begin{cor}\label{cor:dim Nev} Nevanlinna functions with $p<\infty$ asymptotic values  have a natural embedding into $\C^{p+2}$.
\end{cor}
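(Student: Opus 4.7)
The plan is to read the dimension count directly off Theorem~\ref{thm:Nev} together with the cocycle identity for the Schwarzian. By Nevanlinna's theorem, a function $g \in \mathcal{N}_p$ has Schwarzian derivative $S(g)$ equal to a polynomial $P(z)$ of degree $p-2$, and conversely every such polynomial arises this way. Since $P$ is determined by its $p-1$ coefficients, I would begin by identifying the space of admissible Schwarzians with $\C^{p-1}$ via the coefficient map $P \mapsto (a_0, a_1, \ldots, a_{p-2})$.

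Next, I would explain how to pass from $P$ back to $g$. The Schwarzian ODE $S(g) = P$ does not have a unique solution: the cocycle relation
\[ S(f \circ g)(z) = S(f)(g(z)) (g'(z))^2 + S(g)(z) \]
combined with the vanishing of the Schwarzian on M\"obius maps shows that if $g$ is one solution and $M \in \mathrm{PSL}(2,\C)$, then $M \circ g$ is also a solution with the same Schwarzian, and conversely any two meromorphic solutions differ by such a post-composition. Thus the fiber of $g \mapsto S(g)$ over each polynomial $P$ is a $\mathrm{PSL}(2,\C)$-orbit, which is parameterized by $3$ complex coordinates (the three entries of a normalized matrix).

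Combining these two parameterizations, a function in $\mathcal{N}_p$ is uniquely specified by the pair $(P, M) \in \C^{p-1} \times \mathrm{PSL}(2,\C)$, giving the desired embedding into $\C^{(p-1)+3} = \C^{p+2}$. To make this a genuine embedding I would fix a holomorphic section of $\mathrm{PSL}(2,\C) \hookrightarrow \C^3$ (for instance, normalize the M\"obius factor by prescribing its values at three chosen points, or equivalently by picking three of the asymptotic values), so that the correspondence $(P,M) \leftrightarrow g$ is a holomorphic bijection onto its image.

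The main subtlety — and the only step that is not purely formal — is justifying the claim that the map $P \mapsto [g]$ into $\mathcal{N}_p / \mathrm{PSL}(2,\C)$ is bijective; but this is precisely the content of Nevanlinna's theorem (surjectivity from the second half of Theorem~\ref{thm:Nev}, and injectivity from the uniqueness built into the truncated-solution construction recalled in Remark~\ref{truncated solutions}). Once that is invoked, the corollary is a dimension count: $(p-1)$ coefficients for the Schwarzian plus $3$ parameters for the M\"obius ambiguity equals $p+2$.
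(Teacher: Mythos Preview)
Your argument is correct and is essentially identical to the paper's: the text preceding the corollary observes that a degree $p-2$ polynomial has $p-1$ coefficients and that solutions of $S(g)=P$ are determined up to post-composition by a M\"obius transformation (three parameters), giving the count $(p-1)+3=p+2$. Your added remarks on fixing a section and on bijectivity via Nevanlinna's theorem simply flesh out what the paper leaves implicit.
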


We are interested in the dynamical systems generated by these Nevanlinna functions and some generalizations of them.  Infinity plays a special role so we separate the cases where infinity is an asymptotic value from those where it is not.  For these functions, the dynamics are unchanged by conjugation by an affine transformation.  Thus  suitably normalized solutions have a natural embedding into $\C^{p-1}$

  In his proof of the above theorem, (see also \cite{DK}, Section 1), Nevanlinna shows that for $f \in \mathcal N_p$, a neighborhood of  infinity is divided into exactly $p$ disjoint sectors, $W_0, \ldots, W_{p-1}$, each with angle $2\pi/p$.  Each of these is an asymptotic tract for one asymptotic value.  The sectors are separated by the rays that bound them, and these are Julia rays.  Although two tracts may map to the same asymptotic value, tracts in adjacent sectors must map to different asymptotic values.

 Another immediate corollary of Nevanlinna's theorem is that the family $\mathcal{N}_p$ is topologically closed.  That is,
\begin{cor}\label{cor:Nev}  If $f$ is topologically conjugate to a meromorphic function $g$ in $\mathcal{N}_p$, and if $f$ is meromorphic, then it is also in $\mathcal{N}_p$.
\end{cor}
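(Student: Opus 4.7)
The plan is to reduce the Corollary to Nevanlinna's Theorem~\ref{thm:Nev}: since $f$ is assumed meromorphic, membership in $\mathcal{N}_p$ is equivalent to having no critical points and exactly $p$ asymptotic values. So the task is to show that these two singular-value counts are preserved by topological conjugation.

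First I would record the set-up. A topological conjugation gives a homeomorphism $\phi$ of $\widehat{\CC}$ with $f\circ\phi=\phi\circ g$; since each of $f$ and $g$ has $\infty$ as its unique essential singularity, necessarily $\phi(\infty)=\infty$. The functional equation forces $\phi(\mathcal{S}_g)=\mathcal{S}_f$, so the singular sets are in bijection, and what remains is to check that $\phi$ preserves the decomposition into critical and asymptotic parts.

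The main step is the topological characterization of the two types of singularity. A point $c\in\CC$ is a critical point of $g$ if and only if $g$ is locally a branched cover of degree $\geq 2$ at $c$, a property of the topological map-germ at $c$; so any critical point of $f$ would pull back via $\phi\inv$ to a critical point of $g$, and since $g$ has none, neither does $f$. An asymptotic value $v$ of $g$ with tract $V$ is characterized, as in Definition~\ref{sing pts}, by the property that $V$ is an unbounded connected component of $g\inv(D_{r,v}\setminus\{v\})$ on which $g$ is an infinite-sheeted (i.e.\ universal) covering onto the punctured disk, while the remaining components are relatively compact disks mapping as finite, unbranched covers (here we use that $g$ has no critical points). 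Since $\phi$ is a homeomorphism fixing $\infty$, it carries unbounded components to unbounded components and covering maps to covering maps of the same sheet-count, so $\phi$ induces a bijection between the asymptotic tracts of $g$ and those of $f$ preserving multiplicities. Hence $f$ has exactly $p$ asymptotic values (counted with multiplicity) and no critical points.

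Applying Nevanlinna's theorem to the meromorphic function $f$ then yields that $S(f)$ is a polynomial of degree $p-2$, so $f\in\mathcal{N}_p$. The main technical point to be careful about is the topological distinction between an asymptotic tract and an ordinary (pre-compact) component of the preimage of a punctured neighborhood; this comes down to using that $\phi$ fixes the essential singularity $\infty$, so that ``unbounded'' is a $\phi$-invariant condition.
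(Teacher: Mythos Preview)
Your argument is correct and is exactly the elaboration the paper has in mind when it calls this an ``immediate corollary'': the singular-value data (number of critical points, number of asymptotic tracts) is a topological invariant of the covering, so a homeomorphism conjugating $g$ to $f$ transports the count ``$p$ asymptotic values, no critical points'' from $g$ to $f$. One small redundancy: once you have established that $f$ is meromorphic with exactly $p$ asymptotic values and no critical points, you are done by the \emph{definition} of $\mathcal{N}_p$; the extra appeal to Theorem~\ref{thm:Nev} to compute $S(f)$ is not needed.
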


 \section{The family $\calm_{p,q,r}$}\label{Mpqr}

 A   family that is more general than $\mathcal{N}_p$ is the family of functions
 \[  \tilde\calm_{p,q,r} = \{ f = P \circ g \circ Q \, | \, g \in \mathcal{N}_r, \, P,Q \mbox{   polynomials of degrees } p,q  \}.  \]

 This family is also topologically closed.  We have\footnote{This result is proved in  \cite{FK}.  Since the proof is short and  the ideas used are relevant we include it here.}
 \begin{propA}  If $h$ is topologically conjugate to a meromorphic function $f=  P \circ g \circ Q$ in $\tilde\calm_{p,q,r}$, and if $h$ is meromorphic, (with essential singularity at infinity),  then it is also in $\tilde\calm_{p,q,r}$;  that is there is a  function $\tilde{g} \in  \mathcal{N}_r $ and  polynomials $\tilde P, \tilde Q$ of degrees $p,q$ respectively such that $h = \tilde{P} \circ \tilde{g} \circ \tilde{Q}$.
 \end{propA}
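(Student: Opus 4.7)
The plan is to insert quasiconformal interpolants between the factors of $f$ so that three resulting compositions become holomorphic. Writing the conjugacy as $h = \varphi \circ P \circ g \circ Q \circ \varphi\inv$, I would seek quasiconformal homeomorphisms $\varphi_1,\varphi_2$ of $\chat$ such that
$$\tilde Q := \varphi_1 \circ Q \circ \varphi\inv, \qquad \tilde g := \varphi_2 \circ g \circ \varphi_1\inv, \qquad \tilde P := \varphi \circ P \circ \varphi_2\inv$$
are all holomorphic. The identity $h = \tilde P \circ \tilde g \circ \tilde Q$ then holds by construction, and it remains only to recognize each factor.

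The first step is to upgrade the topological conjugacy $\varphi$ to a quasiconformal homeomorphism. For $f,h \in \FT$ this is a standard application of quasiconformal surgery: the dynamics on each Fatou component admits a canonical holomorphic model (Koenigs, B\"ottcher, or Fatou linearization), and the conjugacy extends to the Julia set as a quasisymmetry because $f$ has only finitely many singular values. Once $\varphi$ is quasiconformal, let $\mu_\varphi$ denote its Beltrami coefficient. The Measurable Riemann Mapping Theorem, together with the chain rule, determines $\varphi_1$: its Beltrami coefficient is chosen so that the coefficient of $\tilde Q$ cancels to zero. Iterating, $\varphi_2$ is constructed analogously so that $\tilde g$ becomes holomorphic, and the compatibility of the three Beltrami equations then forces $\tilde P$ to be holomorphic as well.

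The three holomorphic factors are identified as follows. $\tilde Q$ is a proper holomorphic self-map of $\CC$ of degree $q$, since $Q$ is such and the quasiconformal maps $\varphi,\varphi_1$ send $\infty$ to $\infty$ properly; hence $\tilde Q$ is a polynomial of degree $q$. The identical argument shows $\tilde P$ is a polynomial of degree $p$. Finally, $\tilde g$ is a transcendental meromorphic function quasiconformally, and therefore topologically, conjugate to $g \in \mathcal{N}_r$; by Corollary~\ref{cor:Nev}, $\tilde g \in \mathcal{N}_r$. This produces the desired decomposition and shows $h \in \tilde\calm_{p,q,r}$.

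The main obstacle is the first step, together with the subsidiary task of checking that the pullback of $\mu_\varphi$ through the non-injective map $Q$ is single-valued as a measurable Beltrami differential: this requires carefully exploiting the way the conjugacy $\varphi$ transports the dynamical decomposition $f = P\circ g\circ Q$ across to $h$, so that the required compatibility of $\mu_\varphi$ on the fibers of $Q$ is automatic. Both points follow from the finite-type structure together with the distinguished topological combinatorics at the essential singularity, but they are the heart of the argument; the remaining identifications of $\tilde P$ and $\tilde Q$ as polynomials and of $\tilde g$ as a Nevanlinna function are routine once the surgery is in place.
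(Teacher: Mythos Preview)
Your overall architecture---insert quasiconformal interpolants between the three factors and invoke the Measurable Riemann Mapping Theorem---is exactly the paper's strategy. But you have run the construction in the wrong direction, and that is what creates the ``main obstacle'' you flag at the end.

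You try to define $\varphi_1$ first so that $\tilde Q=\varphi_1\circ Q\circ\varphi^{-1}$ is holomorphic. This requires $Q^*\mu_{\varphi_1}=\mu_\varphi$, i.e.\ you must \emph{push forward} $\mu_\varphi$ through the non-injective map $Q$; as you note, this is only well-defined if $\mu_\varphi$ happens to be constant along the fibres of $Q$, and nothing in the conjugacy $h=\varphi f\varphi^{-1}$ forces that. Your claim that ``the required compatibility \ldots\ is automatic'' is not justified and in fact need not hold. The paper avoids the issue entirely by going the other way: set $\nu=P^*\mu_\varphi$ so that $\tilde P=\varphi\circ P\circ(\phi^\nu)^{-1}$ is holomorphic, then $\eta=g^*\nu$ so that $\tilde g=\phi^\nu\circ g\circ(\phi^\eta)^{-1}$ is holomorphic. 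Pullback through a holomorphic map is always defined, so there is no fibre-compatibility problem. The remaining factor $\tilde Q=\phi^\eta\circ Q\circ\varphi^{-1}$ is then holomorphic because $h=\tilde P\circ\tilde g\circ\tilde Q$ is given to be meromorphic; equivalently, $Q^*\eta=Q^*g^*P^*\mu_\varphi=f^*\mu_\varphi=\mu_\varphi$ since $h$ holomorphic forces $\mu_\varphi$ to be $f$-invariant. So the circle closes without ever needing a push-forward.

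One smaller point: your $\tilde g=\varphi_2\circ g\circ\varphi_1^{-1}$ is \emph{not} a conjugate of $g$ (the two homeomorphisms differ), so you cannot invoke Corollary~\ref{cor:Nev} as stated. What survives is that $\tilde g$ is a meromorphic map with the same topological covering structure as $g$---$r$ logarithmic asymptotic values and no critical points---and Nevanlinna's theorem then places it in $\mathcal N_r$. The paper makes exactly this distinction.
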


  The proof uses quasiconformal mappings and the Measurable Riemann mapping theorem.   We refer the reader  to the standard literature (\cite{A,LV} and for a more dynamical discussion, \cite{BF}.
 \begin{proof}
We prove the theorem for $h$ quasiconformally conjugate to $f$.   It then follows from Theorem 3.3 of \cite{KK1} that it is true for $h$ topologically conjugate to $f$.

 Let $ \phi^{\mu}$ be a quasiconformal homeomorphism with Beltrami coefficient $\mu$ such that
\[
h=  \phi^{\mu} \circ f \circ (\phi^{\mu})^{-1} =\phi^{\mu}\circ P \circ g \circ Q                 \circ (\phi^{\mu})^{-1}
\]
 is meromorphic.   We can use $P$ to pull back the complex structure defined by $\mu= \bar{\partial}\phi^\mu / \partial \phi^\mu$ to obtain a complex structure $\nu=P^*\mu$ such that the map
\[ \tilde P= \phi^{\mu} \circ  P \circ (\phi^{\nu})^{-1} \]
is  holomorphic.   Note that this is not a conjugacy since it involves two different homeomorphisms.
Similarly,  we can use $g$ to pull back the complex structure defined by $\nu$ to obtain a complex structure $\eta=g^*\nu$ such that the map
\[ \tilde g= \phi^{\nu} \circ  g \circ (\phi^{\eta})^{-1} \]
is meromorphic.  Again this is not a conjugacy.
The function
\[
\tilde{P} \circ \tilde{g}= \phi^{\mu} \circ P \circ (\phi^{\nu})^{-1} \circ \phi^{\nu} \circ g \circ (\phi^{\eta})^{-1}
\]
is a composition of meromorphic functions and is meromorphic.    Now set
\[
\tilde Q=\phi^{\eta} \circ Q_0 \circ (\phi^{\mu})^{-1}.
\]
Although $\tilde Q$ is  not conjugate to $Q$, we were given that
\[h=  \phi^{\mu} \circ P \circ (\phi^{\nu})^{-1} \circ \phi^{\nu} \circ g \circ (\phi^{\eta})^{-1}\circ \phi^{\eta} \circ Q_0 \circ (\phi^{\mu})^{-1} = \tilde{P} \circ \tilde{g} \circ \tilde{Q}\]
  is meromorphic  so that $ \tilde{Q}$ is also meromorphic.

The main point here is that although $\tilde g$ is not   a conjugate of $g$, since the quasiconformal maps $\phi^{\mu}$ and  $\phi^{\nu}$   are homeomorphisms,  the map $\tilde g $    is a meromorphic  map with the same topology as $g$; that is, it has $r$ asymptotic values and no critical values so that by Corollary~\ref{cor:Nev} $\tilde{g}$ belongs to $\caln_r$.   Similarly,  although $\tilde{Q}$ and $\tilde{P}$ are not  respective  conjugates of $Q$ and $P$,   because the quasiconformal maps $\phi^{\mu}$, $\phi^{\nu}$ and  $\phi^{\eta}$   are homeomorphisms,   the maps $\tilde{Q}$ and $\tilde{P}$ are holomorphic maps with the same topology as $Q$ and $P$; that is, they are respectively a degree $q$ and a degree $p$ branched covering of the Riemann sphere with the same number of critical points and the same branching as   $Q$ and $P$.  The maps $\phi^{\mu}$, $\phi^{\nu}$ and  $\phi^{\eta}$ are defined up to normalization.   If we want to keep  the essential singularity of $h$ at infinity,  we normalize so that $\tilde{Q}$ and $\tilde{P}$ are polynomials of degrees $q$ and $p$ respectively, (that is, infinity is a fixed critical point with respective multiplicities $q-1$ and $p-1$).
  \end{proof}

A corollary to this theorem is
\begin{corA}\label{cor:finitedimspace}  The space of functions $\tilde\calm_{p,q,r}$ has a natural embedding into $\C^{p+q+r+4}$ and the subspace $\calm_{p,q,r}$ has dimension $\C^{p+q+r+3}$.
\end{corA}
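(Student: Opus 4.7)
The plan is a direct parameter count that combines Corollary~\ref{cor:dim Nev} with the obvious coefficient parametrization of polynomials, followed by one normalization step to account for the restriction to $\calm_{p,q,r}$.

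First, I would invoke Corollary~\ref{cor:dim Nev} to parametrize $\caln_r$ by $\C^{r+2}$: a function $g\in\caln_r$ is specified by the $r-1$ coefficients of its Schwarzian polynomial (which has degree $r-2$) together with the $3$ complex parameters of the M\"obius group that normalize the solution to the Schwarzian equation. A polynomial $P$ of degree $p$ is specified by its $p+1$ coefficients, and similarly $Q$ of degree $q$ by its $q+1$ coefficients. Hence the factorization $f=P\circ g\circ Q$ that defines $\tilde\calm_{p,q,r}$ provides natural coordinates on the space with total complex dimension $(r+2)+(p+1)+(q+1)=p+q+r+4$. Proposition~A ensures that the factorization data are intrinsic to the meromorphic function $f$ in the sense that they are preserved under quasiconformal (and, by Theorem~3.3 of \cite{KK1}, topological) conjugation within the meromorphic category, so these coordinates are compatible with the natural topology on $\tilde\calm_{p,q,r}$ and give the asserted natural embedding into $\C^{p+q+r+4}$.

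For the subspace $\calm_{p,q,r}$, the extra condition is that infinity is not an asymptotic value of $f$. Since $P$ is a polynomial with $P(\infty)=\infty$, and since $Q$ polynomial means every asymptotic path for $f$ pushes forward to an asymptotic path for $g$, the asymptotic values of $f$ are exactly $P(a_1),\ldots,P(a_r)$ where $a_1,\ldots,a_r$ are the asymptotic values of $g$, and the condition becomes: every $a_i$ lies in $\C$. On this open subset of $\caln_r$, I would exploit the translation subgroup of the M\"obius normalization by post-composing $g$ with the unique translation $T$ sending $a_1$ to $0$; the translation is then absorbed into $P$, since $P\circ T^{-1}$ is still a polynomial of degree $p$. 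This fixes exactly one complex parameter from the M\"obius group and yields the embedding $\calm_{p,q,r}\hookrightarrow\C^{p+q+r+3}$.

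The main delicacy is the non-uniqueness of the factorization $f=P\circ g\circ Q$: the substitutions $(g,P)\mapsto(A\circ g,\,P\circ A^{-1})$ and $(g,Q)\mapsto(g\circ B,\,B^{-1}\circ Q)$ with $A,B$ affine leave $f$ unchanged. These ambiguities are already encoded in the M\"obius parameters of $\caln_r$ and in the coefficients of $P,Q$, so they do not interfere with the dimension count; the phrase ``natural embedding'' refers to the canonical system of parameters (Schwarzian coefficients, M\"obius data, and polynomial coefficients) rather than to a strict injection of the function space. The drop in dimension from $\tilde\calm_{p,q,r}$ to $\calm_{p,q,r}$ corresponds precisely to the absorption of this translation freedom, which becomes available exactly when all asymptotic values of $g$ are forced to lie in $\C$.
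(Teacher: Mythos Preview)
Your parameter count for $\tilde\calm_{p,q,r}$ is exactly the paper's: the polynomials contribute $p+1$ and $q+1$ coefficients, and $g\in\caln_r$ contributes $r+2$ via the $r-1$ Schwarzian coefficients plus three M\"obius parameters. The paper phrases this last step by passing through the associated linear equation $w'' + \tfrac{1}{2} S_g\, w = 0$ and noting that $g$ is a ratio of two solutions, but the content is identical to your invocation of Corollary~\ref{cor:dim Nev}.

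For the passage to $\calm_{p,q,r}$ the two arguments diverge. The paper's proof is the single sentence ``Requiring that infinity cannot be an asymptotic value restricts one parameter,'' treating the constraint as if it were codimension one. You instead normalize one finite asymptotic value of $g$ to $0$ by a translation absorbed into $P$, thereby cutting one M\"obius parameter. This does legitimately produce coordinates on $\calm_{p,q,r}$ in $\C^{p+q+r+3}$, and is more explicit than what the paper writes. The one misstep is your closing rationale: the translation normalization does \emph{not} become available only in $\calm_{p,q,r}$. Any $g\in\caln_r$ with $r\geq 2$ already has at least one finite asymptotic value, so the same absorption trick works on the generic locus of $\tilde\calm_{p,q,r}$ as well. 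In other words, the condition defining $\calm_{p,q,r}$ inside $\tilde\calm_{p,q,r}$ is open, not a hypersurface, so there is no genuine dimension drop of the function space to explain; both $\C^{p+q+r+4}$ and, after your normalization, $\C^{p+q+r+3}$ are redundant coordinate systems, and your final sentence should be read as choosing a slightly more economical one on $\calm_{p,q,r}$ rather than as accounting for a constraint.
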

\begin{proof} A polynomial of degree $d$ is determined by its $d+1$ coefficients.  If  $g \in \caln_r$, set $S_g=S(g)$.   It is determined by its $r-1$ coefficients.  It is straightforward to check that if $w_1,w_2$ are linearly independent solutions of the equation
\[ w'' + \frac{1}{2} S_g w = 0 \]
then $S(w_2/w_1)=S_g$.    Since the space of solutions to this linear differential equation has dimension $2$,  and the solution $g$ of the Schwarzian equation is the quotient of two such solutions,  $g$ is  determined by $r+2$ parameters.  Requiring that infinity cannot be an asymptotic value restricts one parameter.
Therefore $\tilde\calm_{p,q,r}$ has a natural embedding into $\C^{p+q+r+4}$ and the subspace $\calm_{p,q,r}$ has dimension $\C^{p+q+r+3}$.
\end{proof}

\subsection{Dynamics of meromorphic functions}\label{dynamics}
 Let $f:\mathbb{C}\to \widehat{\mathbb{C}}$ be a transcendental meromorphic function and let $f^n$ denote the $n^{th}$ iterate of $f$, that is $f^n(z)=f(f^{n-1})(z)$ for $n\geq 1$. Then $f^n$ is well-defined, except at  the poles of $f, f^2, \cdots, f^{n-1}$, which form a countable set.  These points have finite orbits that end at infinity.

 The basic objects studied are the \emph{Fatou set} and \emph{Julia set} of the function $f$. The Fatou set $F(f)$ of $f$ is defined by
\[  F(f) = \{ z\in \mathbb{C} \ | \ f^n \text{ is defined and normal in a neighborhood of } z\}, \] and the Julia set by
\[ J(f)=\widehat{\mathbb{C}}\setminus F(f).  \]

Note that the point at infinity is always in the Julia set.  If $f$ is a meromorphic function with more than one pole, then the set of prepoles,  $\mathcal{P}=\cup_{n\geq 1}f^{-n}(\infty)$ is infinite. By the Picard theorem, $f^n$ is normal on $\widehat{\mathbb{C}}\setminus\overline{\mathcal{P}}$.  Since it is  not normal on $\overline{\mathcal{P}}$, $J(f)=\overline{\mathcal{P}}$, (see also \cite{BKL1}).

A point $z$ is called a periodic point of $p\geq 1$, if $f^p(z)=z$ and $f^k(z)\neq z$ for any $k<p$. The multiplier of the cycle is defined to be $\mu=(f^p)'(z).$ The periodic point is \emph{attracting} if $0<|\mu|<1$, \emph{super-attracting} if $\mu=0$, \emph{parabolic} if $\mu=e^{2\pi i \theta}$, where $\theta$ is rational number, and {\em neutral} if $\theta$ is not rational.  It is \emph{repelling} if $|\mu|>1$.

If  $D$ is a component of the Fatou set, then $f(D)$ is either a component of the Fatou set or  a component missing one point. For the orbit of $D$ under $f$, there are only two cases:
\begin{itemize}
\item there exist integers $m\neq n\geq 0$ such that $f^m(D)\subset f^n(D)$, and $D$ is called \emph{eventually periodic};
 \item for all $m\neq n$, $f^n(D)\cap f^m(D)=\emptyset$, and $D$ is called a \emph{wandering domain}.
\end{itemize}

Suppose that $\{D_0, \cdots, D_{p-1}\}$ is a periodic cycle of Fatou components, then either:
\begin{enumerate} [\rm (a)]
\item The cycle is (super)attracting: each $D_i$ contains a point of a periodic cycle with multiplier $|\mu|<1$ and  all points in each domain $D_i$ are  attracted to this cycle.
If $\mu=0$, the critical point itself belongs to the periodic cycle and the domain is called super-attracting.
\item The cycle is parabolic: the boundary of each $D_i$ contains a point of a periodic cycle with multiplier $\mu=e^{2\pi i q/m}$, $(q,m)=1$, $m$ a divisor of $p$,  and all points in each domain $D_i$ are
attracted to this cycle.
\item  The components of the cycle are  Siegel disks:  that is, each $D_i$ contains a point of a periodic cycle with multiplier $\mu=e^{2\pi i \theta}$,  where $\theta$ is irrational and there is a holomorphic homeomorphism
mapping each $D_i$ to the unit disk $\Delta$, and conjugating the first return map
$f^p$ on $D_i$ to an irrational rotation of $\Delta$. The preimages under this conjugacy
of the circles $|\xi|=r, r<1$, foliate the disks $D_i$ with $f^p$ forward invariant
leaves on which $f^p$ is injective.
\item  The components of the cycle are Herman rings: each $D_i$ is holomorphically homeomorphic to a standard
annulus and the first return map is conjugate to an irrational rotation of
the annulus by a holomorphic homeomorphism. The preimages under this conjugacy of the circles $|\xi|=r, 1<r<R$, foliate the disks with $f^p$ forward invariant leaves on which $f^p$ is injective.
\item  $D_i$ is an essentially parabolic (Baker) domain: the boundary of each $D_i$
contains a point $z_i$ (possibly $\infty$) and  $f^{np}(z)\to z_i$ for all $z\in D_i$, but $f^p$ is not
holomorphic at $z_i$. If $p=1$, then $z_0=\infty$.
\end{enumerate}

\begin{defn} Define the {\em post-singular set}  of $f$ as the closure of the orbits of the singular values; that is,
\[ P\cals_f=\overline{\cup_{n=0}^\infty f^n(\cals_f)}. \]
For notational simplicity, if a pre-pole $s$ is a singular value, $\cup_{n=0}^\infty f^n(s)$ is a finite set and includes infinity.
\end{defn}

Each non-repelling  periodic cycle  is associated to some singular point.  In particular we have, see e.g. \cite{M2}, chap 8-11 or \cite{Ber}, Sect.4.3,

\begin{thm} If $\{D_0, \cdots, D_{p-1}\}$ is an attracting, superatttracting, parabolic or Baker  periodic cycle of Fatou components, then for some $i =0, 1, \ldots, p-1$, $D_i$ contains a singular value.   If $\{D_0, \cdots, D_{p-1}\}$  is a cycle of rotation domains (Siegel disks or Herman rings) the boundary of each $D_i$ contains the accumulation set of some singular value.
\end{thm}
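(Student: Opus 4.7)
The plan is to treat the six dynamical types separately, using in each case the same underlying principle: if no singular value entered the cycle, then $f^p$ would restrict to an unramified covering of $\bigcup_i D_i$ onto itself, and a Schwarz--Pick-type estimate at the distinguished periodic point would fail. For the rotation-domain case the principle is dual: absence of singular accumulation on the boundary would allow the linearization to extend across the boundary into the Julia set.

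For the attracting case with multiplier $0<|\mu|<1$, assume for contradiction that $(\bigcup_i D_i) \cap \cals_f = \emptyset$. Then each restriction $f:D_i \to D_{i+1\bmod p}$ is an unramified holomorphic covering, and the local inverse of $f^p$ at the attracting periodic point $z_0 \in D_0$ lifts to a single-valued holomorphic self-map $g$ of $D_0$ fixing $z_0$ with $|g'(z_0)|=1/|\mu|>1$. Since $D_0$ is a hyperbolic Riemann surface (its complement in $\chat$ contains the infinite set $J(f)$), $g$ contradicts the Schwarz--Pick lemma. The super-attracting case is immediate from the definition: $(f^p)'(z_0)=0$ forces some iterate $f^k(z_0)$, $0\le k < p$, to be a critical point of $f$, and $f^{k+1}(z_0)$ is then a critical value in the cycle.

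For a parabolic cycle, I work on an attracting petal $\Pi \subset D_0$ at the parabolic periodic point $\zeta \in \partial D_0$ on which the first-return map is conjugated to $w \mapsto w+1$ by a Fatou coordinate. If no singular value lay in the cycle, the inverse branches of $f^{np}$ would extend univalently from $\Pi$ throughout $D_0$ and correspond on $\Pi$ to $w \mapsto w-n$; the resulting normal family of pull-backs then yields a contradiction by hyperbolic-metric estimates on $D_0$ analogous to the attracting case. The Baker-domain case follows the same template but requires Cowen's functional-model classification of essentially parabolic domains; this is where I expect the main technical difficulty, since $D_i$ is not relatively compact and the iterates escape to the essential singularity at infinity, so the relevant singular value must be located along an escaping orbit using the postsingular-tracking estimates surveyed in \cite{Ber}.

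Finally, for a rotation cycle (Siegel disk or Herman ring), the first-return map $f^p|_{D_i}$ is conjugate to an irrational rotation, so every forward orbit in $D_i$ is recurrent and $\partial D_i \subset J(f)$ is forward invariant. If a boundary point $\zeta \in \partial D_i$ lay outside $\overline{\bigcup_{n\ge 0} f^n(\cals_f)}$, successive inverse branches of $f^n$ could be defined and extended holomorphically in a small neighborhood of $\zeta$ along the forward orbit; Montel's theorem would then place $\zeta$ in the Fatou set, contradicting $\zeta \in J(f)$. Hence $\partial D_i \subseteq \overline{\bigcup_n f^n(\cals_f)}$, as required.
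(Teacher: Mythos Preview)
The paper does not prove this theorem; it merely states it with references to \cite{M2}, Chapters~8--11, and \cite{Ber}, \S4.3. Your outline follows the classical arguments in those sources, but two steps are genuine gaps as written.

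In the attracting case, extending the local inverse of $f^p$ at $z_0$ to a single-valued self-map of all of $D_0$ uses the monodromy theorem and hence simple connectivity of $D_0$, which you have not established (and which can fail for immediate basins of rational or meromorphic maps). The standard fix (Milnor, \S8) is to continue the inverse branch only over the increasing union $V=\bigcup_n g^n(U_0)$ of a small linearizing disk $U_0\ni z_0$; then $V$ is simply connected by construction, hyperbolic since $V\subset D_0\subsetneq\C$, and Schwarz--Pick on $V$ gives the contradiction. In the rotation case, normality of the inverse branches $g_n$ of $f^{np}$ on a neighborhood $U$ of $\zeta\in\partial D_i$ does \emph{not} place $\zeta$ in the Fatou set, which is defined by normality of \emph{forward} iterates; what a normal limit of the $g_n$ actually yields is an extension of the linearizing conjugacy of $D_i$ across $\zeta$, contradicting maximality of the rotation domain. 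Moreover, the normality of $\{g_n\}$ does not come from Montel alone: you need that the chosen branches are univalent on $U$ and take bounded values on $U\cap D_i$. Finally, your caution about Baker domains is well placed: for general meromorphic maps a Baker cycle need not contain a singular value at all, so that clause of the statement relies on the finite-type hypothesis ambient in the paper.
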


We use the following  definition of a hyperbolic function in $\tilde\calm_{p,q,r}$\footnote{This definition is equivalent to standard definitions of hyperbolic functions because for these function $\cals_f$ is finite.}
\begin{defn}\label{def:hyperbolic}  An  $f \in \tilde\calm_{p,q,r}$
 is called \emph{hyperbolic} if
\[P\cals_f\cap J(f)=\emptyset.\]
\end{defn}

Note that if all the singular values of $f$ are attracted to attracting or super-attracting cycles then $f$ is hyperbolic.

 For a discussion  of hyperbolicity for more general meromorphic functions,  see the discussion in \cite{Z} and the references therein.

Singularly finite maps may have Baker domains but

\begin{thm}\label{NoWD}\cite{BKL4}
If $\cals_f$ is finite, then there are no wandering domains in the Fatou set.
\end{thm}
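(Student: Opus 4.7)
The plan is to adapt the quasiconformal deformation strategy that Sullivan invented for rational maps and that Eremenko--Lyubich, Goldberg--Keen, and Baker--Kotus--L\"u extended to transcendental functions of finite type. I argue by contradiction: suppose $f$ has a wandering Fatou component $D$ and set $D_n = f^n(D)$ for $n \ge 0$. Since $\cals_f$ is finite, only finitely many critical values appear and each lies in at most one of the pairwise disjoint $D_n$; hence only finitely many $D_n$ contain critical points, and after replacing $D$ by a sufficiently high iterate I may assume $f\colon D_n \to D_{n+1}$ is an unramified, hence conformal, isomorphism for every $n \ge 0$. The forward orbit of $D_0$ is then a free grand orbit on $\bigcup_n D_n$.

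Next I would build an infinite-dimensional family of invariant Beltrami deformations. Inside $D_0$ pick a small topological disk $U$ and an infinite-dimensional ball $\calb \subset L^\infty(U)$ of Beltrami coefficients $\mu$ with $\|\mu\|_\infty \le k < 1$. Spread each such $\mu$ to the entire grand orbit of $U$ under $f$ by pushforward along the conformal forward iterates and pullback along the countably many inverse branches, and extend by zero off the grand orbit. The resulting differential $\tilde\mu$ on $\C$ satisfies $f^*\tilde\mu = \tilde\mu$ almost everywhere. The measurable Riemann mapping theorem yields a quasiconformal homeomorphism $\phi^{\tilde\mu}$ of $\C$, normalized to fix infinity, and the $f$-invariance of $\tilde\mu$ guarantees that
\[
f_{\tilde\mu} \;=\; \phi^{\tilde\mu} \circ f \circ (\phi^{\tilde\mu})^{-1}
\]
is meromorphic with an essential singularity at infinity.

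The quasiconformal closure argument used in the proof of Proposition~A (which applies to any finite-type meromorphic function, not only to those of the form $P \circ g \circ Q$) then implies that $f_{\tilde\mu}$ has the same combinatorial type as $f$: the same numbers of critical points and asymptotic values, each with the same multiplicity. Consequently all the $f_{\tilde\mu}$ lie in a single finite-dimensional parameter space whose dimension is determined by $\#\cals_f$, as in Corollary~\ref{cor:dim Nev} and Corollary~A. On the other hand $\mu \mapsto f_{\tilde\mu}$ depends holomorphically on $\mu$, and a Bers-style argument shows that its image is infinite-dimensional modulo the finite-dimensional group of M\"obius normalizations: the grand-orbit quotient of $\bigcup_n D_n$ contains $U$ as a free fundamental piece, so its Teichm\"uller space absorbs the entire ball $\calb$. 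Comparing dimensions yields a contradiction, so no wandering component can exist.

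The main obstacle is precisely this last dimension comparison: one must show that distinct invariant Beltrami differentials really produce non-conjugate meromorphic $f_{\tilde\mu}$ rather than collapsing onto a finite-dimensional subfamily. The delicate point is to check that the Teichm\"uller parameter supported on the wandering disk injects faithfully into the moduli of finite-type meromorphic dynamics, and to verify that spreading $\mu$ along the countable backward orbit of $U$ through the poles of $f$ does not introduce any hidden relations among the $\tilde\mu$'s. Handling these two subtleties is precisely the content that has to be imported from \cite{BKL4} to make the meromorphic version of Sullivan's argument go through.
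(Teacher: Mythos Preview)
The paper does not supply a proof of this theorem at all: it is stated with the citation \cite{BKL4} and used as a black box. So there is no ``paper's own proof'' to compare your proposal against; your task reduces to whether your outline faithfully reproduces the Baker--Kotus--L\"u argument.

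Your sketch is the standard Sullivan-type strategy and is essentially the route taken in \cite{BKL4} (building on Eremenko--Lyubich and Goldberg--Keen for the entire case). One small imprecision: after discarding finitely many initial iterates you can arrange that $f\colon D_n \to D_{n+1}$ is an unbranched covering, but in the transcendental setting you cannot conclude it is a conformal \emph{isomorphism}; the degree may well be infinite. This does not matter for the construction of invariant Beltrami differentials, since you only need to pull back $\mu$ along all inverse branches to obtain an $f$-invariant $\tilde\mu$, and the forward images of $U$ lie in pairwise disjoint components anyway. You have correctly identified the genuine difficulty, namely the injectivity of $\mu \mapsto [f_{\tilde\mu}]$ modulo affine conjugacy, and you are right that this is exactly what must be imported from \cite{BKL4}; the meromorphic case requires additional care because preimages accumulate on poles and one must verify the supports of the spread differentials stay disjoint and measurable.
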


\subsection{Holomorphic families}

In this paper, we are interested in the family $\calm_{p,q,r} \subset \tilde\calm_{p,q,r}$    of meromorphic functions in $\tilde\calm_{p,q,r}$ for which infinity is not an asymptotic value.  For each choice of  triples $(p,q,r)$ this is  an example of a holomorphic family.   Below we state the general definitions and results we need.

 \begin{defn}[Holomorphic family]
A {\em holomorphic family} of meromorphic maps over a complex manifold $X$  is a  map
$\calf:X \times \C \rightarrow \hat\C$, such that $\calf(x,z)=:f_x(z)$ is meromorphic for all $x\in X$ and $x \mapsto f_x(z)$ is holomorphic for all $z\in \C$.
\end{defn}

\begin{defn}[Holomorphic motion]
A {\em holomorphic motion} of a set $V \subset \hat\C$ over a connected complex manifold with basepoint $(X,x_0) $ is a map  $\phi: X  \times V \rightarrow \hat\C$ given by
 $(x,v) \mapsto \phi_x(v) $ such that
 \begin{enumerate}[\rm (a)]
\item  for each $v \in V$ , $\phi_x(v)$ is holomorphic in $x$,
 \item
  for each  $x \in X$,  $\phi_x(v) $ is an injective function of  $v \in V$, and,
  \item  at $x_0$, $\phi_ {x_0} = {\rm Id}$.
  \end{enumerate}

A holomorphic motion  of a set $V$ respects the dynamics of the holomorphic  family $\calf$ if $\phi_x(f_{x_0} (v)) = f_x(\phi_x(v))$  whenever both $v$ and $f_{x_0}(v)$ belong to $V$ .

\end{defn}

The following equivalencies are proved for rational maps in \cite{McM} and extended to the transcendental setting in \cite{KK1}.

\begin{thm} \label{jstable}
Let $\calf$ be a holomorphic family of meromorphic maps with finitely many singularities, over a complex manifold $X$, with base point $x_0$. Then the following are equivalent.
\begin{enumerate} [{\rm (a)}]
\item The number of attracting cycles of $f_x$ is locally constant in a neighborhood of  $x_0$.
\item There is a holomorphic motion of the Julia set  of $f_{x_0}$ over a neighborhood of $x_0$ which respects the dynamics of $\calf$.
\item If in addition, for $i=1, \ldots,N$, $s_i(x)$ is are holomorphic maps parameterizing the singular values of $f_x$, then the  functions $x \mapsto f_x^n(s_i(x))$ form a normal family on a neighborhood of $x_0$.
\end{enumerate}
\end{thm}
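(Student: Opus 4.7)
The plan is to prove the three implications (b)$\Rightarrow$(a), (a)$\Rightarrow$(c), and (c)$\Rightarrow$(b), following the Mañé--Sad--Sullivan--McMullen framework from the rational setting, adapted to transcendental maps with finitely many singular values as in \cite{KK1}. The principal tools are Montel's theorem (equivalently Zalcman's rescaling lemma), the holomorphic implicit function theorem applied to non-degenerate periodic cycles, and the $\lambda$-lemma formulated for unbounded Julia sets in $\C$.

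For (b)$\Rightarrow$(a), I would observe that the $\lambda$-lemma applied to $\phi_x$ makes the motion a homeomorphism of $J(f_{x_0})$ onto $J(f_x)$; in particular the Fatou set $F(f_x)$ varies continuously. An attracting periodic cycle of $f_{x_0}$ of period $n$ is a simple root of $f_x^n(z)-z = 0$, so by the implicit function theorem it persists as a holomorphic function of $x$ with multiplier $\mu(x)$ holomorphic in $x$. It cannot cross $|\mu|=1$ without the cycle meeting $J(f_x)$, which is forbidden by injectivity of $\phi_x$. Conversely, a new attracting cycle of $f_x$ for $x$ near $x_0$ would have to emerge from a cycle of $f_{x_0}$; repelling cycles are followed by $\phi_x$ and remain repelling, while non-repelling cycles of $f_{x_0}$ are finitely many and are likewise pinned down by the motion. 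Hence the attracting count is locally constant.

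For (a)$\Rightarrow$(c), I would argue by contradiction. If $\{x\mapsto f_x^n(s_i(x))\}_{n\ge 0}$ fails to be normal at $x_0$ for some $i$, then by Montel the family takes values in a neighborhood of any prescribed repelling periodic point $z^*\in J(f_{x_0})$ at parameters arbitrarily close to $x_0$. Extending $z^*$ to a holomorphic function $z^*(x)$ via the implicit function theorem, the analytic function $h_n(x):=f_x^n(s_i(x))-z^*(x)$ has zeros accumulating at $x_0$, so along a sequence $x_k\to x_0$ the orbit of $s_i$ lands on a repelling cycle. The standard Mañé--Sad--Sullivan perturbation then yields nearby parameters at which the multiplier along the cycle that has captured the (free) singular value $s_i$ crosses into the open unit disk, producing an attracting cycle not present at $x_0$---contradicting (a). Here the finiteness of $\cals_{f_x}$ is essential, since it ties the creation of attracting cycles to the capture of one of the holomorphically parameterized $s_i$.

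For (c)$\Rightarrow$(b), each repelling periodic point $z_0$ of $f_{x_0}$ of period $n$ satisfies $(f_{x_0}^n)'(z_0)\neq 1$, so the implicit function theorem produces a holomorphic extension $z(x)$ on a neighborhood of $x_0$. Two such extensions cannot collide, since a collision would force a parabolic cycle to appear, which by the classification in Section~\ref{dynamics} must attract one of the singular orbits, contradicting the normality in (c). This yields a holomorphic motion respecting the dynamics on the dense set $R\subset J(f_{x_0})$ of repelling periodic points; the transcendental $\lambda$-lemma extends it to $\overline R = J(f_{x_0})$, and dynamical compatibility on $R$ passes to its closure by continuity. The main obstacle, relative to the rational case, is the unboundedness of $J(f_x)$ and the essential singularity at infinity: one must work in $\C$ rather than $\hat\C$, and control the accumulation of prepoles and of periodic points near infinity carefully so that both the $\lambda$-lemma and the covering structure of the asymptotic tracts are preserved along the motion.
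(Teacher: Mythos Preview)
The paper does not prove Theorem~\ref{jstable}; it merely states the result and attributes it to \cite{McM} for rational maps and to \cite{KK1} for the extension to finite-type transcendental maps. There is therefore no proof in the paper to compare against. Your outline follows precisely the Ma\~n\'e--Sad--Sullivan--McMullen framework that those references implement, so in that sense your approach is the expected one and is essentially correct as a sketch. A couple of points would need more care in a full write-up: in (b)$\Rightarrow$(a) the holomorphic motion is only defined on $J(f_{x_0})$, so it does not directly ``pin down'' non-repelling cycles lying in the Fatou set---the cleaner argument is that any change in the attracting count forces a cycle to cross $|\mu|=1$, producing an indifferent cycle in $J(f_x)$ whose preimage under $\phi_x$ would have to be indifferent as well, contradicting density of repelling points; and in (a)$\Rightarrow$(c) the step from a singular value landing on a repelling cycle to the creation of a new attracting cycle requires the finite-type Fatou--Shishikura bound, which is available in this class but should be invoked explicitly.
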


\begin{defn}[$J-$stability]
A parameter $x_0\in X$ is a $J$-stable parameter for the family $\calf$ if it satisfies any of the above conditions.
\end{defn}

The set of non $J$-stable parameters is precisely the set where bifurcations occur, and it is often called the {\em bifurcation locus} of the family $\calf$, and denoted by $\calb_X$. In families of maps with more than one singular value, however, it makes sense to consider subsets of the bifurcation locus where only some of the singular values are bifurcating, in the sense that the  families  $\{g_n^i(x):= f_x^n(s_i(x))\}$ are normal in a neighborhood of $x_0$ for some values of $i$, but not for all.  We define
\[
 \calb_X(s_i)= \{x_0\in X \mid  \{g_n^i(x)\} \text{\ is not normal in any neighborhood of $x_0$}.\}
\]

In this paper we investigate {\em dynamically natural}  one dimensional slices of the holomorphic family $\calm_{p,q,r}$.      In these slices,  roughly speaking, all the dynamic phenomena but one are fixed and the last is determined by a ``free asymptotic value''.   We will study the components of the complement of the bifurcation locus in these slices.  The parameters in them are $J$-stable.

Precisely, (see \cite{FK})
 \begin{defn}\label{dynnatslice}
 A one dimensional subset $\Lambda \subset X $ is a {\em dynamically natural slice} with respect to $\calf$ if the following conditions are satisfied.

\begin{enumerate}[\rm (a)]
\item $\Lambda$ is holomorphically isomorphic to the complex plane punctured at points where the function is not in the family;  for example, points where the number of singular values is reduced.  The removed points are called {\em parameter singularities}.  By abuse of notation we denote the image of $\Lambda$ in $\C$ by $\Lambda$ again,  and denote the variable in $\Lambda$ by $\lambda$.

\item The singular values are given by distinct holomorphic functions $s_i(\lambda)$, $i=1,\ldots, N-1$, and an asymptotic value $v_{\la}$ that is an affine function of $\la$.\footnote{This is only for convenience of exposition and can be arranged by a holomorphic change of coordinates in $X$.}  We call $v_\la$ the {\em free asymptotic value}; we require that $\calb_\Lambda(v_\la) \neq \emptyset$.
\item The poles (if any) are given by distinct holomorphic functions $p_i(\lambda)$, $\lambda \in \Lambda$, $i\in\Z$.
\item The critical values and some of the asymptotic values are  attracted to an attracting or parabolic cycle\footnote{If the attracting cycle is parabolic, the functions in the slice are not hyperbolic but the results hold.  See \cite{FK} for further discussion}  whose period and multiplier are constant for all $\la\in \Lambda$.   We call these {\em dynamically fixed singular values}.  The remaining singular values include $v_\la$;  if there is only one remaining value, it may have arbitrary behavior.  If there are several,  they  satisfy a relation that persists throughout $\Lambda$ so that  the remaining dynamical behavior is controlled by the behavior of $v_\la$.  Examples of how this may work are discussed in Section~\ref{tanpq}. %
\item  Suppose  $v_{\la_0}$ is attracted to $\cala_{\la_0}$,  the basin of attraction of an attracting  cycle that does not attract any of the dynamically fixed singular values.   Then
the slice $\Lambda$ contains, up to affine conjugacy,  all meromorphic maps $g:\C\to \chat$ that are quasiconformally conjugate to  $f_\lo$ in $\C$ and conformally conjugate to $f_\lo$ on  $\C \setminus \cala_{\la_0}$.
\item $\Lambda$ is maximal in the sense that if $\Lambda'= \Lambda \cup \{\la_0\}$ where $\la_0$ is a parameter singularity, then $\Lambda'$ does not satisfy at least one of the conditions above.
\end{enumerate}
\end{defn}
In the components of the complement of the bifurcation locus in these slices  $v_\la$ is attracted to an attracting cycle $\mathbf a$ of fixed period; this is {\em the period of the component}.  We distinguish two cases:
\begin{enumerate}[\rm(i)]
\item   $\bfa$ does not attract one of the dynamically fixed singular values.  We call these {\em Shell components} and denote   the individual components by $\Omega$ and the collection by $\cals$.
\item $\bfa$ attracts one of the dynamically fixed singular values in addition to attracting  $v_\la$.  We call these {\em Capture components}.  Their properties are very different from those of the shell components and we leave a discussion of them to future work.
\end{enumerate}

\section{Shell components}\label{shell}
  The properties of shell components are described in detail in \cite{FK}.   We summarize them here.  We assume $\calf_\la$ is the restriction of a holomorphic family in $\calm_{p,q,r}$ to a   dynamically natural slice $\Lambda$ and denote a function in $\calf_\la$ by $f_\la$.  We need the following definitions

\begin{defn}  Suppose that for some $\la$, $\{a_1, a_2, \ldots, a_{k-1},  a_{0}  \}$ satisfies $f_\la^i(a_i)=a_{i+1}  \mod k$, $i=1, \ldots, k-1$ where $a_1=v_\la$ and $a_{0}=\infty$;  that is, $v_\la$ is a prepole of order $k-1$.  Then $\la$ is called a {\em virtual cycle parameter of order $k$}.   This is justified by that fact that if $\gamma(t)$ is an asymptotic path for $v_\la=a_1$ and if $h$ is the branch of the inverse of $f^k$ taking $\infty$ to the prepole $a_1$ then
\[ \lim_{t \to \infty} f^k(h(\gamma(t))=v_\la=a_1. \]
We can think of the points as forming a {\em virtual cycle}.
\end{defn}

\begin{defn} Let $\Omega$ be a shell component in $\Lambda$ and let
\[\bfa_\la=\{a_0, a_1, \ldots, a_{k-2},  a_{k-1} \} \]
be the attracting cycle of period $k$ that attracts $v_\la$.   Suppose that as $n \to \infty$,  $\la_n \to \las \in \partial{\Omega}$ and     the multiplier $\mu_{\la_n}=\mu(\bfa_{\la_n})=\Pi_0^{k-1} f'(a_i^n) \to 0.$   Then $\las$ is called a {\em virtual center} of $\Omega$.
\end{defn}
Since the attracting basin of the cycle $\bfa_\la$ must contain $v_\la$, we will assume throughout that the points in the cycle are labeled so that $v_\la$ and $a_1$ are in the same component of the immediate basin.

The next theorem collects the main the results in \cite{FK} about shell components in a dynamically natural slice $\Lambda$ for a holomorphic family of transcendental functions with finite singular set, none of whose asymptotic values is at infinity.  Note that Theorem A of the introduction is part (c) of the theorem.

\begin{thm}\label{shell components}
Let $\Omega$ be a shell component in $\Lambda$.  Then if $\D^* = \{ z \, : \, 0<|z| <1 \}$
\begin{enumerate}[(\rm a)]
\item The map $\mu_\la: \Omega \rightarrow \D^*$ is a universal covering map.  It extends continuously to $\partial\Omega$ and $\partial\Omega$ is piecewise analytic;  either $\Omega$ is  simply connected and   $\mu_\la$ is infinite to one or $\Omega$ is isomorphic a punctured disk and the puncture is a parameter singularity.

\item There is a unique virtual center on $\partial\Omega$.   If the period of the component is $1$, the virtual center is at infinity.
\item  Every (finite) virtual center of a shell component is a virtual cycle parameter and any virtual cycle parameter on the boundary of a shell component is a virtual center.
\end{enumerate}
\end{thm}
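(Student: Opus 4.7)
\medskip

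The plan is to handle the three assertions in order, exploiting the fact that on a shell component $\Omega$ the attracting cycle $\bfa_\la=\{a_0,\dots,a_{k-1}\}$ depends holomorphically on $\la$ (by the implicit function theorem) and does \emph{not} attract any of the dynamically fixed singular values.

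For part (a), I would begin by showing that the multiplier $\mu_\la=(f_\la^k)'(a_1(\la))$ is a well-defined holomorphic map $\Omega\to\D$ that avoids $0$. A zero of $\mu_\la$ would mean some $a_j(\la)$ coincides with a critical point of $f_\la$; but the critical points of $f_\la\in\calm_{p,q,r}$ come from the polynomial factors $P$ and $Q$, and their values are precisely the dynamically fixed singular values, which by the definition of a shell component are attracted to a different cycle. Hence $\mu_\la\colon\Omega\to\D^*$. Next I would verify properness: by Theorem~\ref{jstable} every boundary point of $\Omega$ is either a parameter singularity or a $J$-unstable parameter; in the latter case the attracting cycle either becomes neutral ($|\mu_\la|\to1$), is lost to a pole, or loses $v_\la$ from its basin. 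A proper holomorphic map $\Omega\to\D^*$ with no critical points is a covering map, so it must be a universal cover. Uniformization then gives that $\Omega$ is either simply connected (covered by the half-plane) or isomorphic to $\D^*$ itself, in which case the puncture is a parameter singularity.

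For part (b), observe that in the universal-cover description the fiber $\mu_\la^{-1}(r)$ for $r\to 0$ approaches a single end of $\Omega$: when $\Omega$ is simply connected, $\mu_\la$ factors through $\exp$ on the upper half plane, and $\mu_\la\to0$ corresponds to the single end at $+i\infty$; when $\Omega\cong\D^*$, $\mu_\la$ is the identity and $\mu_\la\to 0$ corresponds to the puncture. Either way there is a unique boundary point where $\mu_\la\to 0$, i.e.\ a unique virtual center. The period-one case (virtual center at infinity) follows from the observation that for $k=1$ the fixed point $a_0(\la)$ escapes to $\infty$ along any sequence with $\mu_\la\to 0$, and $\infty$ is a parameter singularity of the slice.

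Part (c) is the technical heart and will be the main obstacle. Let $\la_n\to\la^*\in\partial\Omega$ with $\mu_{\la_n}\to0$. Because $\mu_{\la_n}=\prod_{i=0}^{k-1}f_{\la_n}'(a_i^n)$ and critical points were ruled out in part (a), some $a_j^n\to\infty$; since the poles $p_i(\la)$ move holomorphically, $a_{j-1}^n$ must approach a pole of $f_{\la^*}$. Iterating this analysis backward along the cycle and using the normalization $a_1=v_\la$, we conclude that $v_{\la^*}$ is a prepole of order $k-1$, i.e.\ $\la^*$ is a virtual cycle parameter. The converse direction (a virtual cycle parameter on $\partial\Omega$ is a virtual center) requires showing that along any approach $\la_n\to\la^*$ within $\Omega$, the cycle point $a_j(\la_n)$ enters an asymptotic tract of infinity, so that $f_{\la_n}'(a_j(\la_n))\to\infty$ and the product forming $\mu_{\la_n}$ collapses to $0$. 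The delicate point, and the reason the hypothesis that $\infty$ is \emph{not} an asymptotic value is crucial, is in controlling the geometry of the approach: one uses that at a logarithmic singularity the local inverse is a logarithm, so the tract geometry around the pole $p_i(\la^*)$ forces large derivatives in a quantitative way, while the finite asymptotic-value structure at infinity keeps the preceding iterates from escaping into an uncontrolled region. This quantitative asymptotic analysis inside the tract is the step that must be executed carefully.
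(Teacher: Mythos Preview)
The paper does \emph{not} prove this theorem: it is stated as a summary of results from \cite{FK} (see the sentence immediately preceding the theorem, ``The next theorem collects the main results in \cite{FK}\ldots''), and no proof is given in the present paper. So there is nothing in the paper to compare your proposal against.

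That said, your sketch has a substantive gap in part~(a). You correctly argue that $\mu_\la$ never \emph{vanishes} (because a zero would force a critical point of $f_\la$ onto the cycle), but you then invoke ``a proper holomorphic map $\Omega\to\D^*$ with no critical points is a covering map'' without ever showing that $\mu_\la$ has no critical points \emph{as a function of $\la$}. These are two different statements: $\mu_\la\neq 0$ concerns the value of the multiplier, while absence of critical points of the map $\la\mapsto\mu_\la$ concerns $d\mu_\la/d\la$. Nothing in your argument addresses the latter. Moreover, the properness claim is delicate precisely because $\mu_\la$ \emph{can} tend to $0$ along sequences approaching $\partial\Omega$ (this is the virtual center), so preimages of compact sets in $\D^*$ need not obviously be compact in $\Omega$ without further work.

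The approach actually taken in \cite{FK} (and the reason condition~(e) appears in Definition~\ref{dynnatslice}) is quasiconformal surgery: given $\la_0\in\Omega$ and a target multiplier $\mu'$ near $\mu_{\la_0}$, one deforms the complex structure on the attracting basin $\cala_{\la_0}$ to change the multiplier to $\mu'$, straightens by the Measurable Riemann Mapping Theorem, and then invokes condition~(e) to conclude the resulting map lies in the slice $\Lambda$. This produces a local holomorphic inverse to $\mu_\la$, hence $\mu_\la$ is a local biholomorphism; path-lifting then gives the covering property. Your outline for parts~(b) and~(c) is broadly reasonable as a plan, but note that the converse direction of~(c) in \cite{FK} also relies on careful tract estimates of the type you allude to.
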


Here we prove a stronger theorem for slices of the family $\calm_{p,q,r}$; recall that all their asymptotic values  are finite.  Note that because the functions are of the form  $f(z) =Q \circ g \circ P(z)$, if $v$ is an asymptotic value of $g$, then   $Q(v)$ is an asymptotic value of $f$.    There are  $p$ distinct asymptotic tracts corresponding to each asymptotic tract of $v$  so that there are $pr$ distinct asymptotic tracts at infinity separated by the Julia directions.        The asymptotic values, tracts and Julia directions depend holomorphically on the parameters.
At each pole of $f$ of order $k$ there are $kqr$ pre-asymptotic tracts and the pull-backs of the rays in the Julia directions separate them.

\begin{thmB}
\label{nearby attracting cycle}
Let $\Lambda$ be a dynamically natural slice for the meromorphic family $\calm_{p,q,r}$ consisting of meromorphic  functions of the form  $f_\la=Q \circ g \circ P$ all of whose asymptotic values are finite,  and let $\las$ be a virtual center parameter of order $k$.  Then $\las$ is on the boundary of a shell component of order $k$ in $\Lambda$.  That is,  in any neighborhood of $\las$ there exists $\la \in \Lambda$ such that $f_\la$ has an attracting cycle of period $k$.
 \end{thmB}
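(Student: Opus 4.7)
The plan is to exhibit, in every neighborhood of $\lambda^*$, a parameter $\lambda\in\Lambda$ for which $f_\lambda$ admits an attracting cycle of period $k$ whose immediate basin captures the free asymptotic value $v_\lambda$. Such a $\lambda$ automatically lies in a shell component of period $k$, and as these components will be shown to accumulate at $\lambda^*$, the parameter $\lambda^*$ ends up on the boundary of the shell locus. The cycle will be built as a perturbation of the virtual cycle $\{v_{\lambda^*},a_2,\ldots,a_{k-1},\infty\}$ in which the ``point at infinity'' is replaced by a point $b_0$ deep inside an asymptotic tract $T_\lambda$ at $\infty$ for $v_\lambda$.

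I would begin by setting up coordinates using the Nevanlinna structure of $f_\lambda=Q\circ g\circ P$, $g\in\caln_r$. For each $\lambda$ near $\lambda^*$, the tract $T_\lambda$ through which the virtual orbit escapes admits a holomorphically varying conformal map $\psi_\lambda\colon T_\lambda\to\{\Re w>M\}$ realizing $f_\lambda|_{T_\lambda}$ as the universal covering $f_\lambda(z)=v_\lambda+e^{-\psi_\lambda(z)}$ of $D(v_\lambda,\rho)\setminus\{v_\lambda\}$. I would also record the Laurent expansion of $f_\lambda$ at the pole $p(\lambda)$ near $a_{k-1}(\lambda^*)$, together with the holomorphic germ $G_\lambda(w):=f_\lambda^{k-2}(v_\lambda+w)$, which is holomorphic in $(\lambda,w)$ near $(\lambda^*,0)$ with $G_{\lambda^*}(0)=p(\lambda^*)$.

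Next I would solve the period-$k$ closure equation. Parametrizing a putative cycle by its point $b_0\in T_\lambda$ on a chosen sheet $n$ of the universal cover, the intermediate points are $b_1=v_\lambda+e^{-\psi_\lambda(b_0)}$ and $b_j=f_\lambda^{j-1}(b_1)$ for $j=2,\ldots,k-1$, and the closure condition $f_\lambda(b_{k-1})=b_0$ becomes a single holomorphic equation in $b_0$ depending holomorphically on $\lambda$. For every sufficiently large $|n|\in\Z$, the implicit function theorem yields a holomorphic branch $b_0=b_0^{(n)}(\lambda)$ on a fixed neighborhood of $\lambda^*$, giving a genuine period-$k$ cycle $\bfa^{(n)}(\lambda)$ of $f_\lambda$ with $b_0^{(n)}(\lambda)$ escaping along $T_\lambda$ as $|n|\to\infty$.

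Finally I would analyze the multiplier $\mu_n(\lambda)$. Using $f_\lambda'(z)=-\psi_\lambda'(z)\,e^{-\psi_\lambda(z)}$ on $T_\lambda$, the pole derivative of $f_\lambda$ at $b_{k-1}$, and the bounded factors at the intermediate $b_j$'s, one obtains a product formula for $\mu_n$ from which one reads that $\mu_n(\lambda)\to 0$ along any path on which the cycle degenerates to the virtual one, that is as $b_0^{(n)}\to\infty$. Hence the open set $\{\lambda : |\mu_n(\lambda)|<1\}$ is non-empty; each component of it is a shell component $\Omega_n$ of period $k$, and the $J$-stability of the dynamically fixed singular values on a fixed neighborhood of $\lambda^*$ ensures that in $\Omega_n$ the attracting cycle $\bfa^{(n)}(\lambda)$ captures $v_\lambda$ but no other singular value. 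As $|n|\to\infty$ the cycles $\bfa^{(n)}$ degenerate toward the virtual cycle at $\lambda^*$, so the corresponding $\Omega_n$ accumulate at $\lambda^*$; every neighborhood of $\lambda^*$ meets $\Omega_n$ for some large $n$, and the theorem follows. The main obstacle is the quantitative analysis of $\mu_n$: one must verify rigorously that the exponentially small factor $e^{-\psi_\lambda(b_0)}$ genuinely dominates the polynomial-in-$|b_0|$ contributions from $\psi_\lambda'(b_0)$ and the pole factor at $b_{k-1}$, uniformly in the tract and pole geometries determined by the particular Nevanlinna factorization of $f_\lambda$.
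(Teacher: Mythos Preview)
Your multiplier step contains a genuine gap. The period-$k$ cycles $b_0^{(n)}(\lambda)$ that the implicit function theorem produces on a full neighborhood of $\lambda^*$ are all \emph{repelling}, with $|\mu_n|\to\infty$ (not $0$) as $|n|\to\infty$. The closure equation itself forces this: at $\lambda=\lambda^*$ one has $b_{k-1}-p\approx G'(0)\,e^{-\psi(b_0)}$ and $f(b_{k-1})\approx c\,(b_{k-1}-p)^{-m}$, so $f(b_{k-1})=b_0$ yields $|b_0|\asymp e^{m\Re\psi(b_0)}$, i.e.\ $\Re\psi(b_0)\asymp\tfrac{1}{m}\log|b_0|$. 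Hence your ``exponentially small'' factor is only $e^{-\Re\psi(b_0)}\asymp|b_0|^{-1/m}$, while the pole contributes $|f'(b_{k-1})|\asymp|b_0|^{1+1/m}$; combined with the polynomial growth of $|\psi'(b_0)|$ this gives $|\mu_n|\asymp|\psi'(b_0)|\cdot|b_0|\to\infty$. The attracting cycle the theorem asserts does exist for suitable $\lambda$ near $\lambda^*$, but it does \emph{not} persist to $\lambda=\lambda^*$: its tract point escapes to $\infty$ as $\lambda\to\lambda^*$, so it cannot be captured by an implicit-function construction centred at $\lambda^*$.

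The paper's argument is organized differently and avoids this trap. It first \emph{selects the parameter}: since $g(\lambda):=f_\lambda^{k-1}(v_\lambda)$ has a pole at $\lambda^*$, there is a ``triangular'' sector $\hat S_V\subset\Lambda$ with vertex $\lambda^*$ on which $g(\lambda)$ lands deep inside the asymptotic tract $A_\lambda$. For each such $\lambda$ the authors then build an explicit domain $\mathbb T\subset A_\lambda$, namely the $f_\lambda$-preimage of a small triangular region $\tilde{\mathbb T}\ni v_\lambda$ with a vertex at the nearby prepole $u_\lambda=f_\lambda^{-(k-2)}(p_\lambda)$, and verify $f_\lambda^{k}(\mathbb T)\subset\mathbb T$ by a direct size comparison: with $\eta_\lambda=|v_\lambda-u_\lambda|$ one has $|f_\lambda^{k-2}(v_\lambda)-p_\lambda|\asymp\eta_\lambda$, whereas the pullback of $\partial\mathbb T$ to the vicinity of $p_\lambda$ lies at distance $\asymp|\log\eta_\lambda|^{-1/m}$; since $\eta_\lambda\ll|\log\eta_\lambda|^{-1/m}$ the inclusion holds and Schwarz's lemma gives the attracting fixed point of $f_\lambda^k$. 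The idea your sketch is missing is precisely this parameter selection, which pushes the \emph{orbit of $v_\lambda$} (rather than a pre-existing periodic orbit) into the tract and thereby creates a genuine exponential gain in the contraction.
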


As an immediate corollary to Theorem~\ref{shell components} and Theorem B is
\begin{corB}
 In a dynamically natural slice of  $\calm_{p,q,r}$,  every virtual cycle parameter is a virtual center and vice versa.
 \end{corB}

\begin{remark} The essence of the theorem is that the dynamic picture at the poles is reflected in the parameter picture at the virtual center parameters.\footnote{ This is analogous to the situation for Misiurewicz points in the parameter plane for quadratic polynomials. }   If infinity is an asymptotic value, both the dynamics and parameter pictures are different.
\end{remark}

\begin{proof}   Since $\las$ is a virtual center parameter of order $k$,  $f_{\las}$ has a virtual cycle  $\ba^*=\{a_1^*=v_{\las},\ldots, a_{k-1}^*, a_{0}^*=\infty \}$. For each  $j=2, \ldots, k-1$, the cycle uniquely determines
  a branch of the inverse of $f_{\las}$, $f_{\las,j}^{-1}$  such that $f_{\las,j}^{-1}(a_{j}^*)=a_{j-1}^*$.  By abuse of notation, for readability, we drop the $j$ and denote all of these branches by $f_{\las}^{-1}$.   Because we are in a dynamically natural slice of a holomorphic family,
  the analytic continuations of the $f_{\las}^{-1}$, denoted by $f_{\la}^{-1}$ are well defined.

   Note however, that in a neighborhood  of the asymptotic value $a_1^*$, the inverse branch of $f_{\las}$ is not  uniquely determined.
Since $v_{\las}$ is part of a virtual cycle, a neighborhood $U$ of $v_{\las}$ has at least one pre-image that is in an asymptotic tract. If the multiplicity of $v_{\las}$ is one,  then by definition there is a unique asymptotic tract that is determined by the virtual cycle of $f_{\las}$; we denote it by $A_{\las}$ and take as $f_{\las}^{-1}$ the branch that maps $U$ to $A_{\las}$. Then taking the analytic continuation of this $f_{\las}^{-1}$ we obtain $A_{\la}$ as the analytic continuation of $A_{\las}$. If the multiplicity  of $v_{\las}$ is greater than one there will be a choice among the tracts corresponding to $v_{\las}$ (and hence the inverse branch $f_{\las}^{-1}$). Similarly, if there is more than one asymptotic value that varies with $\lambda$ (and satisfies a functional relation with $\lambda$), we choose the tract (or one of them if there is more than one) corresponding to the free asymptotic value\footnote{See the examples in Part 2.}. In the argument below, we take $A_{\las}$, or choose one of the tracts as $A_{\las}$, along with the corresponding branch $f_{\las}$. Since, in general, the asymptotic value is not omitted, there may also be infinitely many inverse branches of the neighborhood that are bounded. We don't need to concern ourselves with them here.

  Because we are in a  dynamically natural slice we have the following holomorphic functions:
\begin{enumerate}
\item   $ v(\la)=v_\la$ is the free asymptotic value of $f_\la$.
\item If $p^*=a_{k-1}^*$, then $p(\la)=p_\la$ is the holomorphic function defining the pole of $f_\la$ such that   $p(\las)=p^*$.
\item  Note that for $\la$ in a   neighborhood $V  \subset \Lambda$  of $\las$, the affine function $\la \mapsto v_{\la}$   determines a corresponding neighborhood $U$ of $v_\las$ in the dynamic plane of $f_{\las}$ and vice versa.    Define the map
$h: V \rightarrow \C$ by $h(\la)=f^{k-2}_\la(v_\la)=h_\la$.  Then if $V$ is small enough,
both $p_\la$ and $h_\la$ are in   $\hat U \subset U$,  a small neighborhood of $p^*$   in the dynamic plane of $f_{\las}$.
\item  In the dynamic plane of $f_{\la}$, set $u(\la)= f_\la^{-(k-2)}(p_\la)=u_\la$.  Then  $u_\la$ is the preimage of the pole $p_\la$ in a neighborhood of the asymptotic value $v_\la$.
\item Each $u_\la$ has infinitely many inverses in the asymptotic  tract $A_\la$; denote these by $w_{\la, j}$, $j \in \Z$.
\end{enumerate}
   The main ideas for the proof are first to use the relation between $\lambda$ and the free asymptotic value $v_{\la}$ to carefully choose and fix a $\lambda$ close to $\las$ and then to construct a domain  $\mathbb T \subset A_\la$ such that $f_\la^k(\mathbb T) \subset {\mathbb T}$.    It will then follow from Schwarz's lemma that $f_\la^k$ has an attracting fixed point in $\mathbb T$.

 \begin{figure}[htb!]
\begin{center}
\includegraphics[height=3in]{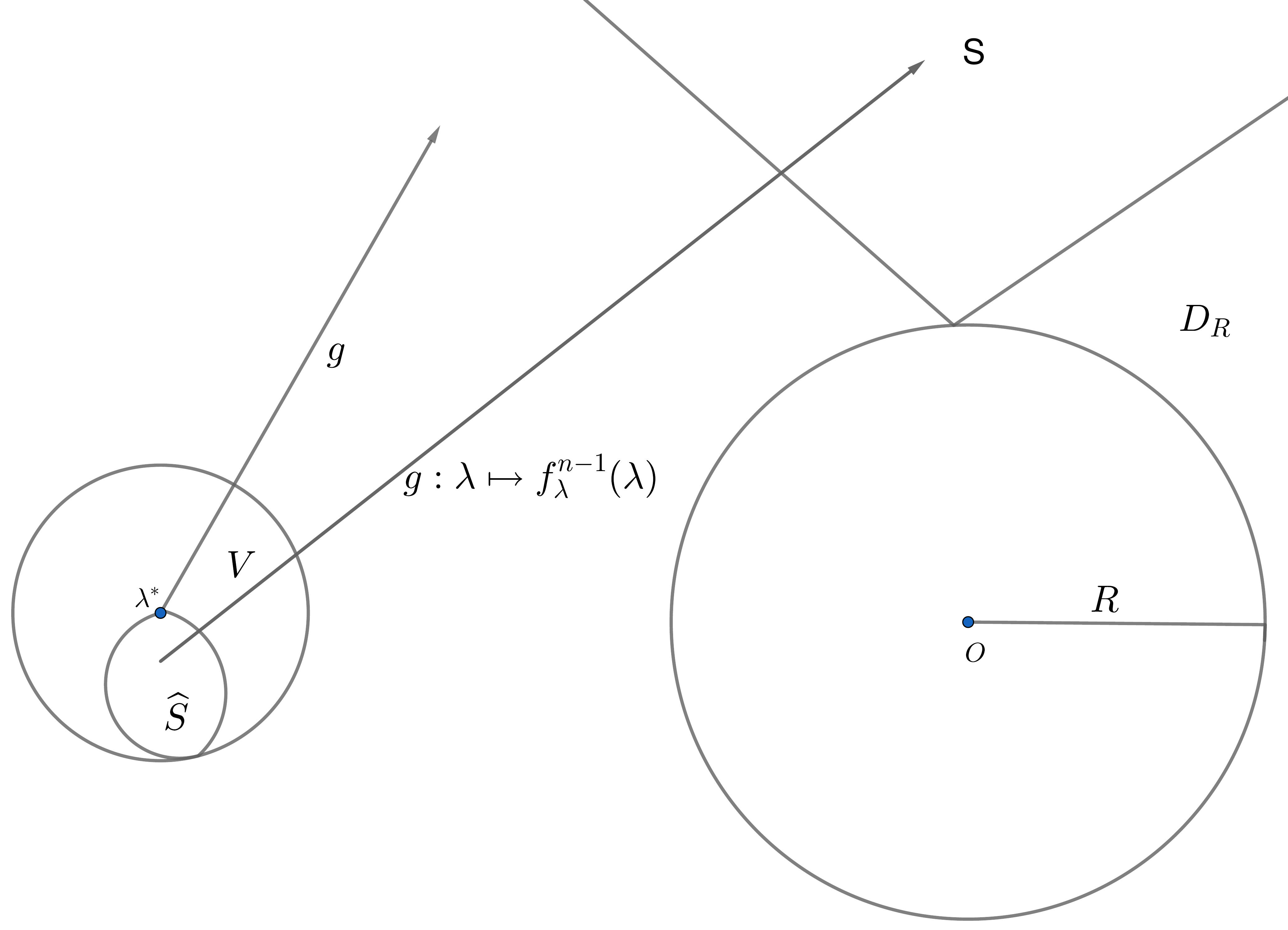}

\end{center}
\caption{\small The map $g_{\la}$ on parameter space.  $S$ is a sector inside all the asymptotic tracts $A_{\la}$, $\la \in V$.  Note that $g(\la^*)= \infty$.}
\label{param}
\end{figure}

{\bf Choosing $\lambda$:}  (See figure~\ref{param}.) The asymptotic tract $A_{\las}$ lies between two Julia rays $r_1^*, r_2^*$ and these span an angle $\theta_{pr}=2\pi/pr$.  We choose a small enough neighborhood $V$ of $\las$ such that  for each $f_\la, \la \in V$,  the Julia rays $r_1(\la),r_2(\la)$ of  $f_\la$,   lie within $\delta=\delta(V)$ of $r_1^*, r_2^*$ respectively.   Each $f_{\la}$ has an asymptotic tract $A_{\la}$ between these rays and we can find a set
$S \subset \cap_{\la \in V} A_\la$   which lies inside the sector in $\hat\C$ bounded by the rays $r_1^*$ and  $r_2^*$ such  that  $S$ is a sector in  $\hat\C$ with vertex at infinity and angle $\theta_{pr}-2\delta$.

 As above, let $U$ be a neighborhood of $v_{\las}$ and let $V$ be the corresponding neighborhood in parameter space.    For each $\la \in V$, $v_\la \in U$   and  $f_\la^{k-1}(v_\la)$  is defined.   Define $g_\la:V \rightarrow \C$ by $g(\la) =f_\la^{k-1}(v_\la)$.   Since $V$ contains $\las$ and $g_\la$ is holomorphic,  we can find $R=R_V$ such that $g(\la)=|f_\la^{k-1}(v_\la)| > R.$    Set $D_R=\{z \in \C \, | \, |z| >R\}$ and let $\hat{S}_V=g_{\la}^{-1}(S \cap D_R )$ be the ``triangular'' subset of $V$ with vertex at $\las$.  Note that the number of such triangular sets is equal to the order of the pole.  If it is one, the set is unique.  If it is greater than one, we choose one  arbitrarily.

 \begin{figure}[htb!]
\begin{center}
\includegraphics[height=3.2in]{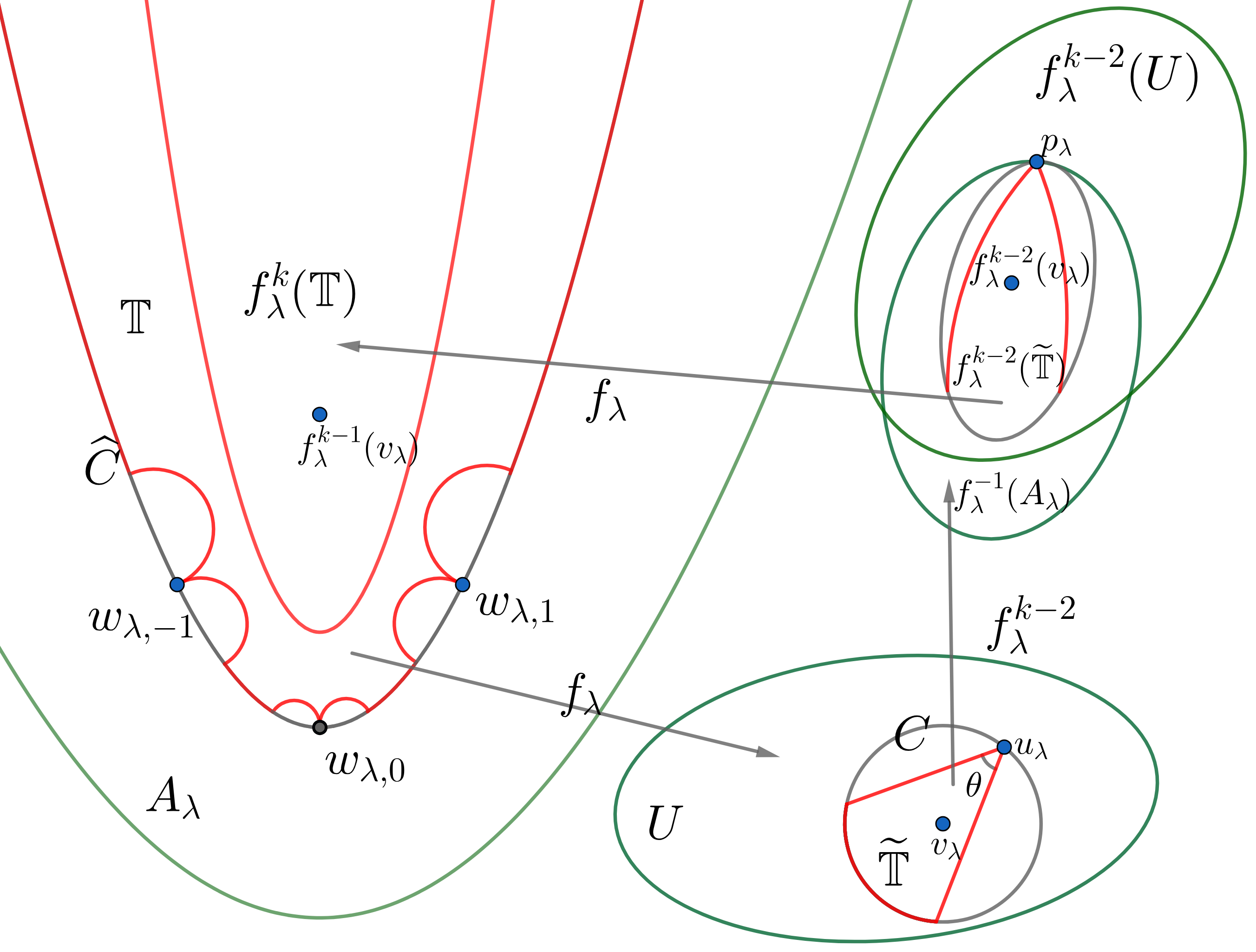}
\end{center}
\caption{\small  The dynamic plane for $f_{\la}$.  The region $f_\lambda^{n}(\mathbb T)$ is contained inside $\mathbb T$.}
\label{dynam}
\end{figure}

{\bf Constructing $\mathbb T$:} (See figure~\ref{dynam}.)  Now we work with a fixed $\la \in \hat{S}_V$ and we set  $U=f_{\la}(A_{\la})$.  Since $S \subset A_{\la}$  we have $f_\la(S) \subseteq U$. In $U$ we have $v_\la$ and $u_\la=f_\la^{-(k-2)}(p_\la)$.   Let $C$ be a circle with center $v_\la$ and radius $|v_\la-u_\la|=\eta_\la$. Taking $\la$  closer to $\las$ if necessary, we may assume $C$ lies in a compact subset of $U$;  that is the disk $\{z \, | \, |z-v_\la|  \leq \eta_\la \} \subset U$.
  Now $\hat{C}=f_\la^{-1}(C) \subset A_\la$ and $f_\la: \hat{C} \rightarrow C$ is an infinite to one cover so $\hat{C}$ contains preimages $w_{\la,j}$, $j \in \Z$,   of $u_\la$.

We want to approximate the distance from a point on $\hat{C}$ to $\partial A_{\la}$.   Let $\phi:A_\la\rightarrow H_l  $
and $\psi: U \setminus\{v_\la\} \rightarrow \D^* $
 be  a conformal homeomorphisms from the left half plane to $A_\la$
 and the punctured disk to $U \setminus\{v_\la \}$ respectively chosen such that $f_\la = \psi^{-1} \circ \exp \circ \phi $.

   Because $\psi$ is a homeomorphism,  $|\psi'(z)|$ is bounded above and below for $z$ in a closed disk  containing  $C$.   Applying Koebe's distortion theorem, there are positive constants $K_1,K_1'$ such that for $z \in C$,
  \[  \frac{ K_1 \eta_\la}{(1+ \eta_\la )^2} <| \psi^{-1}(z)| <  \frac{ K_1' \eta_\la}{(1- \eta_\la )^2}. \]
  We are assuming $\eta_\la$ is small so we may assume it is less than $1/2$.
  Thus we can find closed annuli in $\D^*$ and $U$ containing $\psi(C)$ and $C$ respectively.

  We then have, for appropriate positive constants,   $K_2,K_2'$,
   \begin{equation}\label{hypdens}   K_2 +|\log \eta_{\lambda}|  <| \Re \log \psi^{-1}(z) | <  K_2'+ |\log \eta_{\lambda}|.
   \end{equation}

   This says that the lift $\log \psi^{-1}(C)$ of the circle $C$ lies in a vertical strip $W$ of bounded horizontal width in $H_l.$  Let $\hat{W}$ denote $\phi^{-1}(W) \subset A_{\la}$; it is the lift of the annulus in $U$ to $A_\la$.

   The covering group $\ZZ$ for the exponential map acting on $H_l$ pulls back to an infinite cyclic group $\Gamma$ generated by a map $\gamma: z \mapsto \gamma(z)$ that is the covering group acting on $A_{\la}$ under  the map $f_\la$ so that
   \[  \phi(\gamma(z)) = w + 2 \pi i.   \]

  Thus
\[ \phi'(z) dz = dw \mbox{ and } \phi'(\gamma(z))\gamma'(z)dz=dw. \]

This says that
\[ \frac{dw}{dz}=\phi'(z)= \phi'(\gamma(z))\gamma'(z), \]
and because $|\phi'(z)|$ is bounded in the closure of a  fundamental domain for $\Gamma$ intersected with  $\hat{W}$, it is bounded for all  $w$ in  $\hat{W}$.
Now because $\phi$ is a conformal homeomorphism it preserves the hyperbolic density so that
\[ \rho_{A_{\lambda}}(\gamma(z)) | d\gamma(z)| =\frac{|dw|}{|\Re w|}  \]
\[\rho_{A_{\lambda}}(\gamma(z)) |\gamma'(z) dz|=\frac{|dw +2 \pi i|}{|\Re (w+2 \pi i|)}=\frac{|dw|}{|\Re w|}. \]

Therefore  the hyperbolic density in $A_{\lambda}$ is invariant under the action of $\gamma$.
Equation~(\ref{hypdens}),  says that $|\Re w|$ is comparable to $|\log \eta_{\lambda}|$ which
  gives us   the estimate we want;   that is, there are positive constants $K_3, K_3'$ such that
     \[  \min_{\zeta \in \partial A_\la, y \in \hat C} | y - \zeta |  \sim K_3 +  K_3' | \log \eta_\la |. \]
     Note that because we have a group action on $A_{\la}$, there will be a $\zeta \in \partial A_{\la}$ and a $y \in \hat{C}$ in each fundamental region.
 Now let  $\tilde{\mathbb T}$ be a
 triangular region in $U$ with one vertex at $u_\la$,  two sides  spanning an angle of $\theta< \theta_{pr}/m$, where $m$ is the order of the pole $p_\lambda$, joining $u_{\la}$ to $C$, and third side  an arc of the circle $C$;  the sides are chosen so  that $\tilde{\mathbb T}$ contains $v_\la$ in its interior.  Next set $\mathbb T=f_\la^{-1}(\tilde{\mathbb T})$.   First note that $\mathbb T \subset A_{\la}$ and $f_\la^{-1}(\partial \tilde{\mathbb T})$ is a doubly infinite curve in $A_\la$ that stays a  bounded distance from $\hat{C}$, where the bound depends on $\eta_\la$ and $\theta$.   The inverse images of sides of the triangle form scallops ``above'' $\hat{C}$  (further inside $A_\la$).

 Now look at $ f^{k-2}_\la(\tilde{\mathbb T})$. This is a triangular shaped region contained in $f^{k-2}(U)$ with a boundary point $p_\la$;  it contains $f^{k-2}_\la(v_\la)$. Choose $y \in \widehat{C}$ realizing the minimum above so that $f_\la^{-1}(y)$ is contained in $f^{k-2}(U)$.  Then, as $p_\la$ is a pole of order $m$, we have
 \begin{equation}\label{disttopole}
 |f_\la^{-1}(y) - p_\la| \sim  \frac{K_3}{|\log \eta_{\lambda}|^\frac{1}{m}}.
 \end{equation}
 Since the derivative of $f_{\las}$ along the orbit of $v_{\las}$ from $v_{\las}$ to $p_{\las}$ is bounded and varies holomorphically with $\la$,  the derivative of $f_\la$ along the orbits of $v_\la$ from $v_\la$ to $f^{k-2}_\la(v_\la)$ and $u_\la$ to $p_\la$ are also bounded for $\la \in \hat{S}_V$.   Because $v_\la \in \tilde{\mathbb T}$ and $\eta_\la$ is small, when we map by $f^{k-2}_\la$ the images of $v_\la$ and points on $C \cap \tilde{\mathbb T}$ are comparably close to the pole $p_\la$.   Specifically, for some positive constant $K_4$, we have
 \[ | f^{k-2}_\la(v_\la) - p_\la |  \sim K_4 \eta_\la  <<  \frac{K_3}{ |\log \eta_{\lambda}|^\frac{1}{m}}.\]
Comparing this with the estimate~(\ref{disttopole})
it   follows that
\[ f^{k-1}_\la(C \cap \tilde{\mathbb T} ) \subset \mathbb T  \mbox{  and }
 f^{k-1}_\la(v_\la) \in f_\la^k(\mathbb T).  \]   Therefore  because $v_\la$ is inside $\tilde{\mathbb T}$,
\[  f_\la^k(\mathbb T)  \subset \mathbb T \]
so that by the Schwarz  lemma  $f_\la^k$  has a fixed point in $\mathbb T$.
  \end{proof}

\part{The Extended Tangent Family $\la \tan^p z^q$.}
\label{part2}

In this part of the paper we use the results above to prove that for family
$\calf = \{\la \tan^p z^q \}$, every shell component of period $n>1$ is bounded. A corollary is that every capture component is   bounded as well.  The proof will follow from by studying the period $1$ and period $2$ components.

\section{ The shell components of $\calf$}\
\label{tanpq}
The family $\F= f_\lambda=\lambda \tan^p z^q$ is a subfamily of $\M_{p,q,r}=\{ P\circ g \circ Q \}$ with $P(z)=z^p$ and $Q(z)=z^q$  and $g$ in the one dimensional slice of $\mathcal{N}_2$ consisting of functions that fix $0$ and have symmetric asymptotic values.   The functions in $\F$ have one fixed critical point at $0$ and have either one asymptotic value with mulitplicity $2q$ or
two   asymptotic values with multiplicity $q$  that are   opposite in sign.     Specifically,  the map $\tan z$ has two distinct asymptotic tracts and two asymptotic values, $\pm i$.    Each of these tracts has $q$ pre-images under $Q^{-1}$.
 If $p$ is even,  all $2q$ of the asymptotic tracts map onto punctured neighborhoods of the single point $i^p\lambda=(-1)^{p/2}\lambda$ which is the free asymptotic value $v_\lambda$.
   If $p$ is odd, $q$ of the asymptotic tracts map onto punctured neighborhoods of $i^p \lambda$ and the other $q$ tracts map onto punctured neighborhoods of $(-i)^p \lambda$.  In this case we choose $v_\lambda=i^p\lambda$ as the free asymptotic value.   The other asymptotic value satisfies the relation $v_\lambda' = -v_\la$.

The punctured plane $\lambda \neq 0$ is thus a  {\em dynamically natural slice} in  $\M_{p,q,2}$  and the dynamics are determined by the forward orbit of   $v_{\lambda}$.

  The full set of shell components in this slice is denoted by
  $\mathcal{S}=\{ \Omega  \}  $ and we divide it into subsets
 depending on the period of the cycle as follows:
\begin{defn}
If $pq$ is even
 \[ \mathcal{S}_{n}=\{ \Omega_n |\ f_\lambda\ \text{has an attracting cycle of period  } n\}, \]
  otherwise
 \[ \mathcal{S}_{n}=\{ \Omega_n \ |\ f_\lambda\ \text{has one attracting cycle of period  } 2n \text{ or} \]
\[ \Omega_n' \ |f_\lambda\   \text{ has two attracting cycles of period } n\}.   \]
\end{defn}

For readability we only include the subscript on $\Omega$ if the period is not obvious from the context.

 Figure~\ref{fig:1} shows the parameter plane for the family $f_\lambda(z)=\lambda \tan^2 z^3$.  The period~$1$ shell components are yellow, the period~$2$ shell components are cyan blue.  The capture components are green.
 \begin{figure}[htb!]
\centerline{
\includegraphics[width=0.6\textwidth]{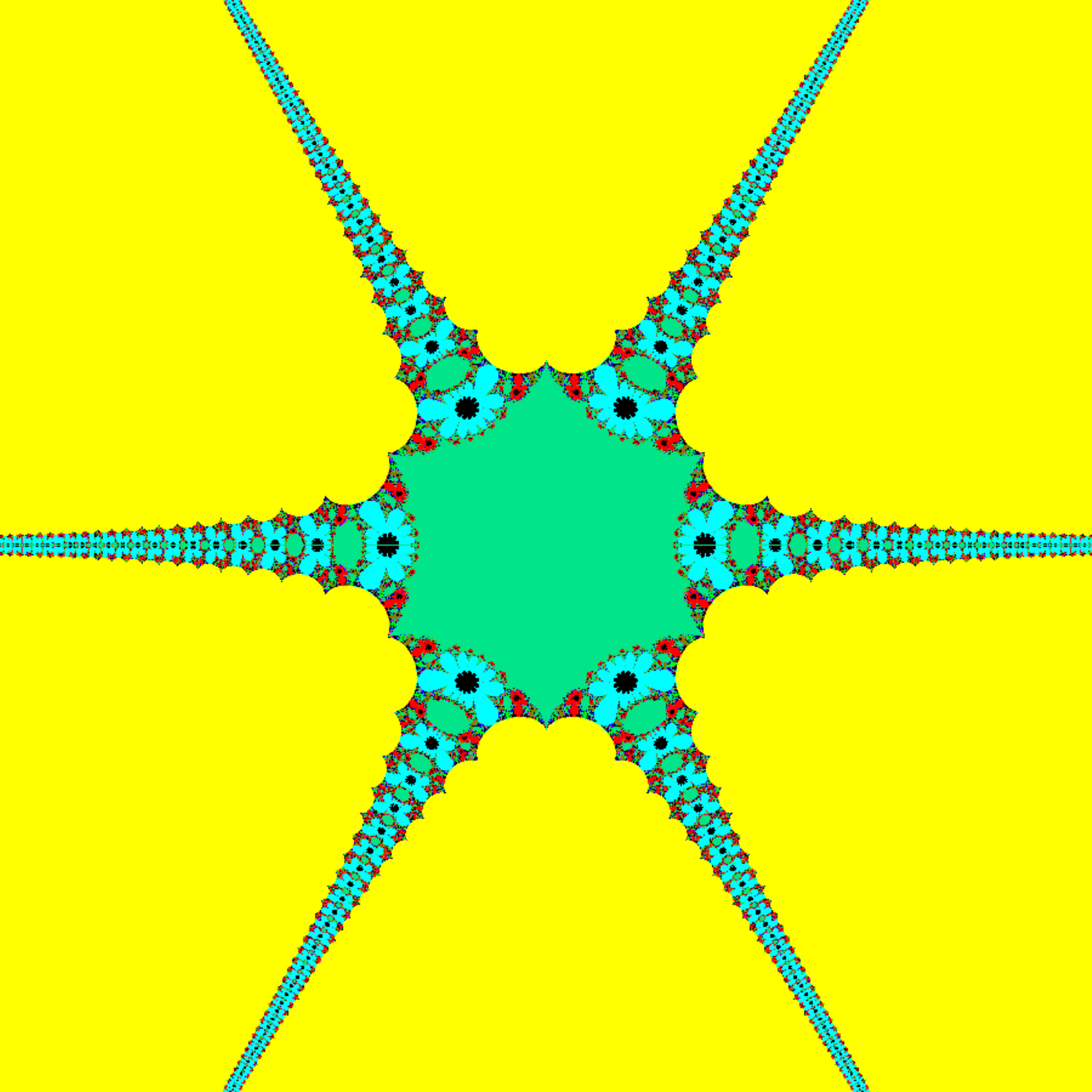}}
\caption{\label{fig:1} \small The parameter plane for $\lambda \tan^2 z^3$.}
\end{figure}

\subsection{Symmetries}
Note  that for any $\lambda$, $f_{\bar{\lambda}}(\bar{z})=\overline{f_{\lambda}(z)}$.    In addition, if $\omega_k, k=0, \ldots, q-1$, are the $q^{th}$ roots of unity,
\[ f_{\omega_k \lambda}(\omega_kz)=\omega_k \lambda \tan^p (\omega_kz)^q=\omega_k f_{\lambda}(z).  \]
 It follows that if  $\Omega \in \cals_n$ then  $\overline{\Omega}, \omega_k \Omega \in \mathcal{S}_n$.

 Suppose $pq$ is even so that  $f_{\lambda}$ has a single attracting cycle $\{z_1, \ldots, z_n\}$ of period $n$ with multiplier $\mu(\lambda)$.  Then  $\{-z_1, \ldots, -z_n\}$ is a cycle for $f_{-\lambda}$ and $\mu(-\lambda)=\mu(\lambda)$.

 If $pq$ is odd then
 \[ f_{\lambda}(-z)=-f_{\lambda}(z)=f_{-\lambda} (z) \mbox{ and } f_{\bar{\lambda}}(z)=\overline{f_{\lambda}(\bar{z})}.  \]
 Assume that $f_{\lambda}$ has two cycles of period $n$. They must be symmetric: that is they are  $\{z_1, \ldots, z_n\}$ and  $\{-z_1, \ldots, -z_n\}$ and they have the same multiplier.

 Now $f_{-\lambda}(z_1)=-z_2$,   $f_{-\lambda}(-z_2)=z_3$, \ldots,  $f_{-\lambda}^m(z_1)=(-1)^mz_{m+1}$, so that  if $n$ is even, $f^n_{-\lambda}(z_1)=z_1$ and $f_{-\lambda}$ also has two cycles of period $n$.  The set of periodic points of $f_{-\lambda}$  is the same as that for $f_{\lambda}$ but they divide into different cycles for $\lambda$ and $-\lambda$; again, $\mu(-\lambda)=\mu(\lambda)$.    If, however, $n$ is odd,
  $f^n_{-\lambda}(z_1)= - z_1$ and $f_{-\lambda}$ has a single cycle of period $2n$; it has multiplier $\mu^2(\lambda)$.
     We summarize this discussion as follows.
  \begin{prop}\label{omegasym}  If $\Omega$ is a hyperbolic component of the $\lambda$ plane, then $-\Omega, \bar{\Omega}$ and $\omega_k\Omega$, $k=0, q-1$ are all hyperbolic components.   That is, the parameter plane is symmetric with respect to reflection in the real and imaginary axes,   rotation by $\pi$ and  rotation by $q^{th}$ roots of unity.

  If $pq$ even,     $\mu(\lambda)=\mu(-\lambda)$ for $\lambda \in \Omega$, while if $pq$ is odd and there are two cycles for  $\lambda \in \Omega$, then each has multiplier $\mu(\lambda)$ and there is a single cycle of double the period for $-\lambda \in -\Omega$ such that for this cycle,  $\mu^2(\lambda)=\mu^2(-\lambda)$.
    \end{prop}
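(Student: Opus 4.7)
The proof of Proposition~\ref{omegasym} reduces to three functional identities for $f_\lambda = \lambda\tan^p z^q$ together with a bookkeeping of how attracting cycles and multipliers transform under them. The plan is first to record the two easy identities
\[
f_{\bar\lambda}(\bar z) = \overline{f_\lambda(z)}, \qquad f_{\omega_k\lambda}(\omega_k z) = \omega_k f_\lambda(z),
\]
and then to compute $f_\lambda(-z)$ by splitting on the parity of $pq$. When $pq$ is even, one checks from $(-z)^q = (-1)^q z^q$ and the parity of $\tan^p$ that $f_\lambda(-z)=f_\lambda(z)$, so $f_\lambda$ itself is even and $f_{-\lambda}=-f_\lambda$. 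When $pq$ is odd, the same elementary calculation gives $f_\lambda(-z)=-f_\lambda(z)=f_{-\lambda}(z)$, so $f_\lambda$ is odd and its derivative even.

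Given an attracting $n$-cycle $\{z_1,\ldots,z_n\}$ of $f_\lambda$ with multiplier $\mu(\lambda)$, the first two identities immediately transport it to a cycle of the corresponding $f$: $\{\bar z_i\}$ is an $n$-cycle of $f_{\bar\lambda}$ and $\{\omega_k z_i\}$ is an $n$-cycle of $f_{\omega_k\lambda}$. Differentiating each identity and multiplying around the cycle shows that the new multipliers are respectively $\overline{\mu(\lambda)}$ and $\mu(\lambda)$, both still in $\mathbb D^*$. Since the period and attracting character of the cycle are preserved, $\bar\Omega$ and $\omega_k\Omega$ are hyperbolic components of the same period as $\Omega$.

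For the $\lambda\mapsto-\lambda$ symmetry, the even $pq$ case is straightforward: using that $f_\lambda$ is even (hence $f_\lambda'$ odd) and $f_{-\lambda}=-f_\lambda$, one verifies directly that $\{-z_1,\ldots,-z_n\}$ is an attracting $n$-cycle of $f_{-\lambda}$ with multiplier $\mu(\lambda)$, giving $\mu(-\lambda)=\mu(\lambda)$. For odd $pq$, the oddness of $f_\lambda$ first produces, from any cycle $\{z_i\}$, the symmetric cycle $\{-z_i\}$ for the \emph{same} function $f_\lambda$; evenness of $f_\lambda'$ gives that the two cycles share the multiplier $\mu(\lambda)$. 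Tracking the orbit of $z_1$ under $f_{-\lambda}=-f_\lambda$ yields inductively $f_{-\lambda}^m(z_1)=(-1)^m z_{m+1}$, so when $n$ is odd the first return to $z_1$ occurs at step $2n$. A chain-rule computation along this orbit, again exploiting the evenness of $f_\lambda'$, produces the multiplier relation claimed in the proposition.

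The main obstacle is the bookkeeping in the odd $pq$, odd $n$ case: one must be careful to confirm that the two $n$-cycles of $f_\lambda$ genuinely fuse into a single $2n$-cycle of $f_{-\lambda}$ (rather than two $n$-cycles), and that the alternating signs in the chain rule combine with the evenness of $f_\lambda'$ to yield the stated power of $\mu(\lambda)$. Once this is pinned down, all assertions of the proposition follow directly from the three functional identities above.
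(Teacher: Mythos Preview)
Your proposal is correct and follows essentially the same approach as the paper: the proposition is stated in the paper as a summary of the preceding ``Symmetries'' discussion, which establishes the same three functional identities $f_{\bar\lambda}(\bar z)=\overline{f_\lambda(z)}$, $f_{\omega_k\lambda}(\omega_k z)=\omega_k f_\lambda(z)$, and $f_\lambda(-z)=(-1)^{pq}f_\lambda(z)$, and then tracks cycles and multipliers under them exactly as you do, including the computation $f_{-\lambda}^m(z_1)=(-1)^m z_{m+1}$ in the odd $pq$ case. Your write-up is slightly more explicit about the even/odd parity of $f_\lambda$ and the resulting parity of $f_\lambda'$ used in the chain-rule bookkeeping, but the argument is the same.
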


\subsection{Components of $\cals_1$}
In this section we will show that  $\cals_1$ consists of exactly $2q$ unbounded components arranged symmetrically around the origin.
Let $\eta_k=\eta_k^+, k=0, \ldots, q-1$, be the roots of $\eta_k^{q}=i$ and  let $\eta_k^-, k=0, \ldots, q-1$, be the roots of $(\eta^{-}_k)^{q}=-i$.  If $q$ is odd these are labeled so that $ \eta_k^-=-\eta_k^+$ whereas if $q$ is even they are labeled so that  $ \eta_k^-=\overline{\eta_k^+}$.

\begin{thm}\label{comps of cals1} The set $\cals_1$ consists of $2q$ unbounded components, $\Omega_k^{\pm}$, $k=0, \ldots, q-1$, such that each is  symmetric about a ray $\ell_k^{\pm}$  in the direction  $ i^{-p}\eta_k^{\pm}$; that is,  if $\lambda \in \ell_k^{\pm}$, $v_\lambda=   s \eta_k^{\pm}$, $s >0$.

   Moreover, for $\lambda \in \ell_k^{\pm} \cap \Omega^{\pm}$:
\begin{enumerate}[(i)]
\item When both $p, q$ are even:     $f_\lambda$ has a single attracting fixed point $z_\la$  and  $\arg{z_\la} =\arg{v_\lambda} $;
\item   When $p$ is odd and $q$ is even, again $f_\lambda$ has a single attracting fixed point $z_\la$  and  $\arg{z_\la} =\arg{v_\lambda} $ or $\arg(-v_\lambda)$;
\item When  $pq$ is odd: $f_\la$ either has two attracting fixed points, $z_\la$ and $-z_\la$ or a single attracting period two cycle $\{z_\la,-z_\la\}$. Moreover, if $\lambda \in \ell_k^+$,  $f_\lambda$ has two attracting fixed points, $z_\la$ on $\ell_k^+$ and $-z_\la$  on $\ell_k^-$, which attract $v_\lambda$ and $-v_\lambda$ respectively;  if  $\lambda \in \ell_k^-$, then $f_\lambda$ has one attracting cycle of period two, $\{z_\la, -z_\la\}$ which attracts both $v_\lambda$ and $-v_\lambda$ and  we can label the points so that $\arg{z_\la} =\arg{v_\lambda}$.
\end{enumerate}
\end{thm}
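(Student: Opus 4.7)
The plan is to exhibit the $2q$ rays $\ell_k^{\pm}$ as axes of invariant one-dimensional real dynamics of $f_\lambda$, to find an attracting cycle on each for $|\lambda|$ large, and then to use the symmetries of Proposition~\ref{omegasym} together with Theorem~\ref{shell components}(b) to package these into exactly $2q$ unbounded shell components.

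For existence, write $\lambda = t\cdot i^{-p}\eta_k^{+}$ with $t>0$, so that $v_\lambda = i^p\lambda = t\eta_k^{+}$. For $z = s\eta_k^{+}$ with $s\in\mathbb{R}$, the identity $(\eta_k^{+})^q = i$ yields $z^q = is^q$ and $\tan(z^q) = i\tanh(s^q)$, hence $f_\lambda(z) = t\eta_k^{+}\tanh^p(s^q)$. Thus the $\eta_k^{+}$-axis is $f_\lambda$-invariant and the restricted dynamics, in the real coordinate $s$, is the one-variable map $\psi_t(s) = t\tanh^p(s^q)$. For $t$ large, $\psi_t$ has an attracting real fixed point $s^*(t)\in(0,t)$ close to $t$ with exponentially small multiplier (since $\mathrm{sech}^2(s^{*q})$ is tiny), and the point $s=t$, representing $v_\lambda$, lies in its immediate basin; so $\lambda = t\cdot i^{-p}\eta_k^{+}$ lies in a period-$1$ shell component $\Omega_k^{+}$ for $t$ large. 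The ray $\ell_k^{-}$ is handled by the parallel computation using $(\eta_k^{-})^q = -i$, which gives $f_\lambda(s\eta_k^{-}) = (-1)^p t\eta_k^{-}\tanh^p(s^q)$. When $p$ is even this again reduces to $\psi_t$ (case (i)); when $p$ is odd and $q$ is even, a sign flip forces the fixed point to sit on the opposite side, giving $\arg z_\lambda = \arg(-v_\lambda)$ on $\ell_k^{-}$ and $\arg z_\lambda = \arg v_\lambda$ on $\ell_k^{+}$, which is case (ii). When $pq$ is odd on $\ell_k^{-}$, the $\eta_k^{-}$-axis is invariant only up to sign, $f_\lambda^2$ restricts to $\psi_t^2$, and the attracting fixed point $s^*$ of $\psi_t$ yields an attracting period-$2$ cycle $\{s^*\eta_k^{-},-s^*\eta_k^{-}\}$; for the $\ell_k^{+}$ part of case (iii), the identity $f_\lambda(-z) = -f_\lambda(z)$ (valid when $pq$ is odd) promotes $z_\lambda$ to a pair $\{z_\lambda,-z_\lambda\}$ of attracting fixed points attracting $v_\lambda$ and $-v_\lambda$ respectively.

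The symmetry of each $\Omega_k^{\pm}$ about its ray follows from Proposition~\ref{omegasym}: combining $f_{\bar\lambda}(\bar z) = \overline{f_\lambda(z)}$ with the rotation $f_{\omega_k\lambda}(\omega_k z) = \omega_k f_\lambda(z)$ produces an antiholomorphic involution of the parameter plane fixing $\ell_k^{\pm}$ pointwise. Unboundedness is immediate, since $\Omega_k^{\pm}$ contains the entire semi-infinite tail of $\ell_k^{\pm}$.

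For exhaustion, differentiate the fixed-point equation $z_\lambda = \lambda\tan^p(z_\lambda^q)$ to express the multiplier at an attracting fixed point as $\mu_\lambda = 2pq\, z_\lambda^q/\sin(2z_\lambda^q)$. Since every period-$1$ shell component has its virtual center at $\infty$ by Theorem~\ref{shell components}(b), one has $\mu_\lambda\to 0$ as $\lambda\to\infty$ within any such component; this forces $|\Im(z_\lambda^q)|\to\infty$, hence $\tan(z_\lambda^q)\to\pm i$ and $z_\lambda\sim(\pm i^p)\lambda$. Combined with the requirement that $v_\lambda = i^p\lambda$ lie in the immediate basin of $z_\lambda$, this pins the asymptotic direction of $\lambda$ at infinity to exactly one of the $2q$ rays $\ell_k^{\pm}$, and the component must coincide with the corresponding $\Omega_k^{\pm}$. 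The main obstacle I anticipate is this completeness step --- making rigorous the match between accesses to infinity of shell components and the $2q$ rays --- together with the parity-by-parity bookkeeping for cases (i)--(iii); the existence portion reduces to routine one-variable calculus on $\psi_t$.
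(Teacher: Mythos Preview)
Your existence and symmetry arguments are correct and, on the rays, arguably more transparent than the paper's. The route, however, is genuinely different. The paper does \emph{not} build the components ray by ray and then count; instead it parametrizes period~$1$ shell components by the attracting fixed point itself. From the fixed-point relation $\lambda=z/\tan^p z^q$ it derives (as you do) $\mu=2pq\,z^q/\sin(2z^q)$, sets $u=2z^q$, and observes that $\{|pq\,u/\sin u|<1\}$ consists of exactly two simply connected unbounded regions $U^{\pm}$ in the upper and lower half $u$-planes. Taking $q$-th roots yields exactly $2q$ connected regions $V_k^{\pm}$ in the $z$-plane, and pushing forward by $z\mapsto\lambda(z)$ gives the $\Omega_k^{\pm}$. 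Symmetry about the rays $\ell_k^{\pm}$ is then checked by the explicit identity $\lambda((\eta_k^{\pm})^2\bar z)=(\eta_k^{\pm})^2\,\overline{\lambda(z)}$ (up to a sign when $p$ is odd), and distinctness of the images is read off from the arguments of $\lambda$ along the symmetry rays. The parity analysis for (i)--(iii) is done in this same framework.

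The gap in your proposal is precisely the step you flag: the exhaustion/count. Your asymptotic argument shows that in any period~$1$ shell component, $z_\lambda\sim(\pm i)^p\lambda$ and $|\Im z_\lambda^q|\to\infty$ along a path to the virtual center; but this only confines $\lambda$ to one of $2q$ \emph{sectors}, not rays, and does not by itself rule out several components sharing a sector. Invoking ``$v_\lambda$ in the immediate basin'' does not close this, since the basin is a $2$-dimensional region with no a~priori angular control. The paper sidesteps the issue entirely: connectedness of $U^{\pm}$ in the $u$-plane gives connectedness of each $V_k^{\pm}$, hence of its image, so the count of $2q$ is automatic. If you want to keep your ray-based construction, the cleanest fix is to import exactly this piece: show that the locus $\{z:|\mu(z)|<1\}$ has $2q$ connected components via the substitution $u=2z^q$, and conclude that each of your $\Omega_k^{\pm}$ already exhausts the period~$1$ parameters in its sector.
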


\begin{proof}   If $f_\lambda(z) \in \cals_1$ and has an attracting fixed point $z=z_\la$, then using the relation
\begin{equation}\label{fp}
  \lambda=\frac{z}{\tan^p z^q},
  \end{equation}
    we compute that the multiplier $\mu(z)$ is
 \[
 \mu(z)= pqz^{q-1}\tan^{p-1} z^q \sec^2z^q=2 pq\frac{z^q}{\sin 2z^q}.
 \]

  Set $u=2z^q$, so that   $\mu(z)=h(u)=pq u/\sin u$. The locus  $|h(u)|=1$ in the $u=x+iy$ plane consists of two branches, one  in the upper half plane and one  in the lower half plane.  Call the unbounded regions defined by these curves $U^{\pm}$ respectively.  Each is symmetric with respect to both the real and imaginary axes.  Moreover  the regions intersect the imaginary axis in the intervals $u=\pm i r^q$,  $r>r_0$, respectively, where $h( \pm i r_0^q)=1$.  Inside these domains $|h(u)|<1$.    As $|y|\to \infty$, the branches are asymptotic to the curves
  \[
   \frac{\pm e^{|y|}}{2pq}+i y.
   \]
    For each $u$ in $U^{\pm}$ there are $q$ corresponding $z=z_\la$'s and these form $2q$ regions $V_k^{\pm}$, $k=0, \ldots, q-1$ in $\mathbb C$.
     The rays   in the directions $\eta_k^{\pm}$ are axes of symmetry for the $V_k^{\pm}$.  We give the argument for $\eta_k=\eta_k^+$; the argument for $\eta_k^-$ follows similarly.  That is, we want to show that if $ z_1 \in V_k$, then $ z_2=\eta_k^2 \bar{z}_1$ is also in $V_k$. Now if $  z_1 \in V_k$, then $z_1^q \in U^+$ and so $z_2^q=-\bar{z}_1^q$ is also in $U^+$.  Taking appropriate $q^{th}$-roots, the symmetry follows.

     From equation~(\ref{fp}) the images $\Omega_k^{\pm}$ of the $V_k^{\pm}$ are in $\cals$.
      First, the symmetry of each of the  $V_k^{\pm}$'s translates into a symmetry of the corresponding $\Omega_k^{\pm}$. Suppose $z_1, z_2=(\eta_k^+)^2 \overline{z}_1 \in V_k^+$. If $p$ is even we have
    \[
    \la(z_2)=\frac{{\eta_k^+}^2  \bar{z}_1}{\tan^p(- \bar{z}_1^q)} = {\eta_k^+}^2  \overline{\la(z_1)} \]
    which implies that the image of $V_k^+$ is symmetric about the lines in direction $\eta_k^+$.  Similarly for $V_k^-$ about the lines in direction $\eta_k^-$.
    If, however, $p$ is odd,
     \[
        \la(z_2)=\frac{{\eta_k^\pm}^2  \bar{z}_1}{\tan^p( -\bar{z}_1^q)} = -{\eta_k^\pm}^2  \overline{\la(z_1)} \]
    so that the images of $V_k^{\pm}$ are symmetric about the rays  in directions $ i\eta_k^{\pm}$.

 To see that there are $2q$ distinct domains $\Omega_k^{\pm}$, assume that $z$ is on a line of symmetry for $V_k^{\pm}$; that is, $z= s\eta_k^{\pm}$, $s>r_0$.   By equation~(\ref{fp}) we have

 \[\lambda( i\eta_k^{\pm})=\frac{s\eta_k^\pm}{\tan^p(\pm i s^q)}=\frac{s\eta_k^\pm}{(\pm i)^p\tanh^p(s^q)}.\]

     If $pq$ is even,  on the rays where  $z=s\eta_k^+$ and $z=s\eta_k^-$,  the arguments of the corresponding $\lambda$'s under the relation (\ref{fp}) are different, so that the images of $V_k^+$ and $V_k^-$ are different. Therefore there are $2q$ components in $\mathcal{S}_1$  corresponding to $V_k^{\pm}$; these are denoted by $\Omega_k^\pm$.  Moreover,  if $z=s\eta_k^+$ and  $p$ is even,  then $\arg v_\lambda=\arg z$.    If however  $p$ is odd and $q$ is even,  then  if $z=s\eta_k^+$, $\arg v_\lambda=\arg z$ but  if $z=s\eta_k^-$, then $\arg v_\lambda=\arg z +\pi$. In either case,  both $z$ and $v_\la$ are perpendicular to $\lambda$.

   If $pq$ is odd, then  $\lambda(z)$ is an even function,
\[ \frac{z}{\tan^pz^q}=\frac{-z}{\tan^p(-z)^q}, \]
so that both $V_k^{\pm}$ have the same image $\Omega_k^+$.  This gives us $q$ components in $\cals$.  For $\la$ in $\Omega_k^+$,  $f_{\la}$ has two fixed points $z_1$ and  $z_2$ and they both have the same multiplier, $\mu(\la)$.   Moreover on the symmetry lines, $z=  s \eta_k^\pm$, we have $ \arg  v_\lambda=  \arg s \eta_k^\pm$.
Consider the $q$ components $\Omega_k^-=\{\lambda \ | \ -\lambda\in \Omega_k^+\}$.    As we saw in the discussion of Proposition~\ref{omegasym}, when $\lambda\in \Omega_k^-$, $f_\lambda$ has one attracting cycle with period $2$ and multiplier $\mu^2(\la)$ so these components are also in $\cals_1$ and there are  $2q$ components in $\mathcal{S}_1$ in this case as well.
 \end{proof}

 The proof of Theorem~B and the statement of  Theorem~\ref{comps of cals1} tell us something about the structure of the parameter plane for the family $\calf_\la$ in a neighborhood of a virtual center.   We have

 \begin{cor}\label{pqrcomps}  In the parameter plane of the family $\calf_\la$,  there are $2pq$  shell components of period $k$ that meet at every virtual center parameter of order $k>1$.   That is, the virtual center parameter is a common boundary point of these $2pq$ components.  Infinity is a common boundary component of $2q$ components.
\end{cor}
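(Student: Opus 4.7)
The plan is to enumerate, at a virtual cycle parameter $\las$ of order $k$, the distinct shell components of period $k$ produced by the construction in Theorem~B, one for each admissible pair of inverse-branch choices. Recall that the Theorem~B construction depends on: (i) the choice of asymptotic tract $A_{\las}$ used for the inverse branch near the free asymptotic value, and (ii) the choice of triangular sector $\hat{S}_V$ with vertex at $\las$ in the parameter plane, whose number equals the order of the pole $p^{*}=a_{k-1}^{*}$. For $\calf$ this pole order is $p$. The remark preceding Theorem~B records that a pole of order $p$ in a function in $\calm_{p,q,r}$ (with $r=2$ here) carries $2pq$ pre-asymptotic tracts, arranged as $p$ sectors each containing $2q$ tract choices, which is the count we aim to match.

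First I would verify that the full $2pq$ choices are admissible at $\las$. When $p$ is even, the unique free asymptotic value $v_\la$ has multiplicity $2q$, so there are $2q$ tract choices at $v_{\las}$ directly. When $p$ is odd, $v_\la$ has multiplicity only $q$; however the functional relation $v'_\la=-v_\la$ together with the $z\mapsto -z$ symmetry of Proposition~\ref{omegasym} forces $-v_{\las}$ to be a prepole of the same order $k$, and applying Theorem~B once to the free asymptotic value $v_{\las}$ and once to $-v_{\las}$ (whose $q$ tracts feed, via the symmetry, attracting cycles that in turn attract $v_\la$) yields $q+q=2q$ tract choices. Combined with the $p$ sector choices, the two cases give a total of $2pq$ candidate shell components.

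Next I would show that distinct pairs of choices produce genuinely distinct shell components. Two different pole sectors $\hat{S}_V$ are disjoint triangles sharing only the vertex $\las$, and the attracting cycle from Theorem~B sits in the dynamical region corresponding to its sector, so the two shell components are disjoint near $\las$. For two tract choices within the same pole sector, the constructed attracting cycles lie in different asymptotic tracts at infinity (belonging either to different tracts of $v_\la$, or in the $p$ odd case to tracts of $v_\la$ versus $-v_\la$); since the Julia rays separating tracts depend holomorphically on $\la$ and remain disjoint over the neighborhood, the cycles cannot be analytically continued to one another, and the shell components must differ.

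For the infinity case, by Theorem~\ref{shell components}(b) the unique virtual center of any period-$1$ shell component is infinity, and Theorem~\ref{comps of cals1} identifies exactly $2q$ unbounded components $\Omega_k^{\pm}$ in $\cals_1$; hence infinity is a common boundary point of precisely $2q$ shell components. The main obstacle is the distinctness step for $p$ odd: one must carefully track how the choice between tracts of $v_\la$ and tracts of $-v_\la$ pins down the location of the attracting cycle, and invoke the holomorphic dependence of attracting cycles on $\la$ to rule out two different constructions producing the same component at a common parameter.
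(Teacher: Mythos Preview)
Your proposal follows essentially the same idea as the paper's proof: run the Theorem~B construction once for each admissible choice at the virtual cycle parameter, yielding $2pq$ shell components. The paper's three-sentence argument simply notes that there are $2pq$ pre-asymptotic tracts at the prepole $p_{\las}$ (one for each asymptotic value tied to $v_\la$ by the functional relation, pulled back through the order-$p$ pole) and that any of them could have been used in Theorem~B; it does not argue distinctness at all, nor does it separate the $p$ even and $p$ odd cases. Your decomposition of the count as $(2q$ tract choices$)\times(p$ parameter sectors$)$ and your distinctness discussion therefore go beyond what the paper supplies.

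One small framing issue: your phrase ``two tract choices within the same pole sector'' treats the parameter triangles $\hat S_V$ as independent of the tract choice, but in the Theorem~B construction $\hat S_V=g^{-1}(S\cap D_R)$ with $S\subset A_{\las}$, so each choice of tract determines its own set of $p$ triangles. Since $g(\la)=f_\la^{k-1}(v_\la)$ has a pole of order $p$ at $\las$, the $2q$ disjoint sectors at infinity pull back to $2pq$ disjoint triangles that together tile a punctured neighborhood of $\las$; distinctness of the shell components then follows directly from disjointness of these parameter triangles near $\las$, and your separate tract-level argument is not needed. This does not affect your count or conclusion.
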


\begin{proof}   In the proof  of Theorem~B we used that fact that the virtual center parameter $\las$ corresponds to a particular prepole $p_{\las}$ of $f_{\las}$ and used one of the asymptotic tracts at $p_{\las}$ in our construction to find the shell component at $\las$.  There are $2pq$ tracts at each prepole; these correspond to the asymptotic values tied to $v_\la$ by the functional relation.  We could have used any one of these in  the argument above
  to obtain a shell component at the virtual center.
  \end{proof}

\subsection{Separating Lines.}\label{sec:seplines}
In this section we discuss the rays in the $\lambda$ plane spanned by the roots of $v_{\lambda}^q=1$ and their negatives.

Let $\omega^\pm_k, k=0, \ldots, q-1$ be the roots of $\omega_k^q=1$ and $(\omega_k^{-})^q=-1$. Since $v_{\lambda}=i^p \lambda$ is the free asymptotic value,  the rays in the $\lambda$ plane we are interested in are those spanned by $i^{-p} \omega^\pm_k$. Denote them by $\tilde{\ell}_k = i^{-p}  r \omega_k^\pm$, $r>0$.

\begin{lemma}\label{seplines}
For the family $\mathcal{F}_\lambda$,
\begin{itemize}
\item[(i)] If $pq$ is even,  none  of the rays in the set
\[\overline{\calr}'= \{  i^{-p} r \omega_k^\pm,  r>0\}_{k=1}^q  \] intersects any component of $\cals$;  that is
$\cals \cap \overline{\calr}'=\emptyset$ for all $k$.
\item[(ii)] if $pq$ is odd, at each virtual cycle parameter $\lambda$ on the ray $\tilde{\ell}_k$, such that $v_{\lambda}$ is a pre-pole of $f_{\lambda}$ of order $n$, $n$ is odd and  there are two components of $\cals_{n+1}$ intersecting the ray.  In one of these components there are two  periodic cycles of  order $n+1$ and in the other there is a single cycle of order $2(n+1)$ attracting both asymptotic values.
\end{itemize}
\end{lemma}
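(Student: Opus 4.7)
The key observation is algebraic: on $\tilde{\ell}_k$, writing $\lambda = i^{-p} r \omega_k^{\pm}$ gives $v_\lambda = r\omega_k^{\pm}$ and $v_\lambda^q = \pm r^q \in \mathbb{R}$. Therefore $\tan^p(v_\lambda^q)$ is real and $f_\lambda(v_\lambda)$ lies on $\lambda\cdot \mathbb{R}$. A straightforward induction then shows that when $pq$ is even, so $\lambda^q = i^{-pq}(\pm r^q) \in \mathbb{R}$, the forward orbit $\{f_\lambda^n(v_\lambda)\}_{n\ge 1}$ stays entirely on $\lambda\cdot\mathbb{R}$; when $pq$ is odd, so $\lambda^q \in i\mathbb{R}$, the iterates alternate between $\lambda\cdot\mathbb{R}$ and $i\lambda\cdot\mathbb{R}$. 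This reality of the orbit is the tool I would use for both parts.

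For part (i) I would proceed by contradiction. The crucial structural fact is that when $pq$ is even, the reflection $R$ across $\tilde{\ell}_k$ is itself a parameter-plane symmetry. Writing $\tilde{\ell}_k$ in direction $e^{i\theta}$, we have $e^{2i\theta} = (-1)^p (\omega_k^{\pm})^2$, and $(e^{2i\theta})^q = (-1)^{pq} = 1$, so $R: \lambda \mapsto e^{2i\theta}\bar\lambda$ decomposes as a rotation by a $q$-th root of unity composed with conjugation, both symmetries given by Proposition~\ref{omegasym}. Suppose $\lambda_0 \in \tilde{\ell}_k \cap \Omega$ for some $\Omega \in \cals$. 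Since $R(\lambda_0) = \lambda_0$ and $R(\Omega) = \Omega'$ is also a shell component, either $\Omega' \neq \Omega$, contradicting the disjointness of distinct components via $\lambda_0 \in \Omega \cap \Omega'$, or $\Omega = \Omega'$. In the second case $R$ acts on the attracting cycle $\mathbf{a}$, which by the reality observation lies on the axis $\lambda_0\cdot\mathbb{R}$; this forces $\mathbf{a}$ to be either the super-attracting critical cycle $\{0\}$ (placing $\lambda_0$ in a capture component, not a shell one) or a cycle paired with a second cycle of the same multiplier, violating the defining uniqueness of a shell component. For the period-$1$ case the conclusion is immediate from the explicit axes $\ell_k^{\pm}$ in Theorem~\ref{comps of cals1}, which are angularly offset from $\tilde{\ell}_k$ by $\pi/(2q)$; for higher periods it follows by combining the $R$-invariant cycle structure with the multiplier covering in Theorem~\ref{shell components}(a).

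For part (ii), with $pq$ odd, $(e^{2i\theta})^q = -1$, so $e^{2i\theta}$ is not a $q$-th root of unity and $R$ is no longer a parameter-plane symmetry, which is exactly why the obstruction in part (i) disappears and the ray now meets shell components. Let $\lambda_0 \in \tilde{\ell}_k$ be a virtual cycle parameter with $v_{\lambda_0}$ a pre-pole of order $n$. The alternation between $\lambda_0\cdot\mathbb{R}$ and $i\lambda_0\cdot\mathbb{R}$ places the relevant poles of $f_{\lambda_0}$ on specific lines; checking the parity of which line contains which iterate forces $n$ to be odd. Theorem~B then gives a shell component $\Omega_1$ of period $n+1$ with $\lambda_0 \in \partial\Omega_1$. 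In the $pq$-odd regime the map symmetry $f_\lambda(-z) = -f_\lambda(z)$ pairs $v_\lambda$ with $-v_\lambda$ dynamically, and the $\lambda \mapsto -\lambda$ correspondence of Proposition~\ref{omegasym}, which doubles odd-period cycles, produces a second shell component with $\lambda_0$ on its boundary: one carries two period-$(n+1)$ cycles (one per asymptotic value $\pm v_\lambda$), the other a single period-$2(n+1)$ cycle attracting both, as claimed.

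The main obstacle is the rigidity step in part (i) for higher-period shell components. The clean reflection argument reduces everything to excluding the case $\Omega = \Omega'$, but ruling this out in full generality requires exploiting either the uniqueness of the virtual center on $\partial\Omega$ from Theorem~\ref{shell components}(b), or the universal-covering property of the multiplier map, to show that an $R$-invariant shell component cannot carry a unique non-critical attracting cycle on its axis of symmetry.
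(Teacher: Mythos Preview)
Your reflection-symmetry approach to part (i) is a genuinely different strategy from the paper's, but the gap you yourself flag is fatal rather than a loose end. When $\Omega = R(\Omega)$ and the attracting cycle $\mathbf{a}$ lies on the axis $\lambda_0\cdot\mathbb{R}$, the dynamical reflection fixes $\mathbf{a}$ pointwise, so there is no ``second cycle of the same multiplier'' to contradict anything; your dichotomy simply does not apply. Nothing in Theorem~\ref{shell components}(a) or (b) rules out an $R$-invariant shell component whose cycle sits on its symmetry axis --- the multiplier map being a universal cover is entirely compatible with that. The paper closes exactly this gap by abandoning symmetry and analyzing the real one-dimensional map $f_r(t)=r\tan^p t^q$ directly: convexity on $(0,\sqrt[q]{\pi/2})$ gives a unique repelling fixed point $z_0$ with $|f_r'|<1$ only on the interval $(-z_0,z_0)$ and its $\pi$-translates, all of which lie in the immediate basin of the superattracting fixed point $0$. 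Hence any other periodic orbit on the line has every factor of its multiplier exceeding $1$ in modulus, so no attracting cycle other than $\{0\}$ can exist. This real-dynamics estimate is the missing idea; the reflection argument alone cannot substitute for it.

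Your part (ii) is closer in spirit to the paper but still misses the geometric point. Invoking Theorem~B gives a shell component with $\lambda_0$ on its boundary, but not that the component \emph{intersects the ray} $\tilde\ell_k$; and the $\lambda\mapsto -\lambda$ symmetry sends $\lambda_0$ to a point on a different ray, so it does not produce a second component meeting $\tilde\ell_k$. The paper instead argues locally at $\lambda_0$: by Corollary~\ref{pqrcomps} exactly $2pq$ shell components meet at the virtual center, and since $pq$ is odd they alternate between the two-cycle and single-$2(n+1)$-cycle types as one goes around $\lambda_0$; an odd number of alternating sectors forces one of each type to straddle the ray. Your parity argument for $n$ odd via the line alternation is correct and is what the paper uses implicitly.
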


\begin{proof}
Set $\lambda= r i^{-p} \omega_k^\pm$ for some real $r$ so that $v_{\lambda}^q$ is real.   Notice that this implies $f_{\lambda}(v_{\lambda})$ lies in the same   line  as $\lambda$.   We have to look at the parities of $p$ and $q$.

 To prove (i),
suppose first that $p$ is even.  Then $\lambda$ and $v_{\lambda}$ are in the same line.  Moreover,  since $\lambda^q$ is real, it follows that $\lambda$, $v_{\lambda}$, and all its images under $f_{\lambda}$, lie on the same line, although perhaps on opposite sides of the origin.  Therefore this line is invariant under $f_{\lambda}$ and
   the restriction of $f_\lambda$  to this line is conjugate to a real-valued function $f_r(t)=r\tan^pt^q$.

Next suppose $p$ is odd and $q$ is even.  Then $\lambda$ and $v_{\lambda}$ are in perpendicular lines.  Since $q$ is even, $\lambda^q$ is real so
  $(f_{\lambda}(v_{\lambda}))^q$ is real, and
  $f^2_\lambda(v_\lambda)$ is also on the line through $\lambda$, as is the rest of its orbit.  That is, $f_\lambda$ is invariant on the line through $\lambda$ and the orbit of $v_\lambda$ eventually lands on the line.  Again  the restriction of $f_\lambda$  to this line is conjugate to a real-valued function $f_r(t)=r\tan^pt^q$.

We claim that in these cases, $f_{\lambda}$ cannot have an attracting cycle other than the super-attracting fixed point $0$.  We consider the family of real valued functions
 $f_r(t)=r\tan^pt^q$.     On the interval $(0, \sqrt[q]{\pi/2})$, $f_r, f_r'$ and $f_r''$ are all positive and $f_r$ goes from $0$ to infinity.  Therefore there is one fixed point $z_0$ inside this interval.  Because $0$ is a superattracting fixed point, its basin contains the interval $(0, z_0)$.
 By symmetry, about the imaginary axis if $p$ is even and about the origin if $p$ is odd, the basin of $0$  contains the interval $I_0=(-z_0,z_0)$.  Since $f_r''(t)>0$,  $f_r'(t)>1$ for $t$ in $(z_0, \sqrt[q]{\pi/2})$, and $\tan t$ has period $\pi$, $|f'_r(t)|<1$ only in $\mathcal I$, the union of the translates of the interval $I_0$.

 We claim there are no other attracting periodic cycles in $\overline{\calr}'$.   If there were such a cycle, $\{z_1, \ldots, z_n\}$, its points would have to be outside $\mathcal I$ and  its multiplier  would have to satisfy
 $\Pi_1^n |f_r(z_n)|<1$.  This cannot happen since none of these factors can be less than $1$.

 To prove (ii), we have
    both $p$ and $q$ odd.  Then  $\lambda^q $  and $(f_{\lambda}(v_{\lambda}))^q$ are pure imaginary.  Therefore
 \[  f^2_\lambda(v_\lambda) = \lambda \tan^p (f_{\lambda}(v_{\lambda}))^q = \pm i \tanh^p(f_{\lambda}(v_{\lambda})^q \]
 is in the line containing $v_{\lambda}$.   The orbit of $v_{\lambda}$ thus alternates between two perpendicular lines, so if it approaches an attracting cycle, that cycle must have even period and hence $\lambda$ does not belong to $\cals_1$.

Suppose  $\lambda$ is on the ray  spanned by   $r i^{-p} \omega_k$, and is inside one of the components $\Omega_k$ intersecting it.   The multiplier of the cycle is real and monotonic in $\Omega_k$.   The virtual center $\lambda^*$ of $\Omega_k$ is therefore on the ray.  By Corollary~\ref{pqrcomps}, there are $2pq$ components at $\lambda^*$ and since $pq$ is odd, as we move around $\lambda^*$, they alternate between those with two cycles of period $n+1$ and one cycle of period $2(n+1)$ attracting both asymptotic values.  Again since $pq$ is odd, there is one of each type intersecting the ray.
 \end{proof}

\subsection{Components of $\mathcal{S}_2$}\label{per2}
 By Theorem~\ref{shell components},  each component in $\mathcal{S}$ is simply connected and the multiplier map is a universal covering.  Moreover, each component has a virtual center on its boundary where the limit of the multiplier is zero, and if this is finite, it is a virtual center parameter so that the asymptotic value is a pre-pole.   For completeness, we include a proof of the last statement for components of $\mathcal{S}_n$ so that we can use  it to characterize the virtual centers of $\cals_2$.

\begin{lemma}\label{virtcenter} If the  shell component $\Omega\in \mathcal{S}_n$, is bounded, then the virtual center $\lambda^*$ satisfies $f^{n-1}_{\lambda^*}(v_{\lambda^*})=\infty$.  That is,  the asymptotic value is a pre-pole of order $n$ and  the virtual center is a virtual cycle parameter.
\end{lemma}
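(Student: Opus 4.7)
The plan is to apply part~(c) of Theorem~\ref{shell components} (Theorem~A) to identify the virtual center $\lambda^*$ as a virtual cycle parameter, and then match the order of the resulting virtual cycle with the period $n$ of $\Omega$ via a continuity argument on the attracting cycle.

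Since $\Omega$ is bounded, $\partial\Omega$ is compact, so the unique virtual center $\lambda^* \in \partial\Omega$ guaranteed by part~(b) of Theorem~\ref{shell components} is a finite parameter. Part~(c) of the same theorem then asserts that $\lambda^*$ is a virtual cycle parameter of some order $k \geq 2$, so $f_{\lambda^*}^{k-1}(v_{\lambda^*}) = \infty$ while $f_{\lambda^*}^{j}(v_{\lambda^*})$ is finite for $0 \leq j \leq k-2$.

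To identify $k$ with $n$, choose a sequence $\lambda_m \in \Omega$ with $\lambda_m \to \lambda^*$ and let $\bfa_m = \{a_1^{(m)}, \dots, a_n^{(m)}\}$ be the attracting $n$-cycle of $f_{\lambda_m}$, labeled so that $v_{\lambda_m}$ and $a_1^{(m)}$ lie in a common immediate basin component. The multiplier $\mu_m = \prod_j f'_{\lambda_m}(a_j^{(m)}) \to 0$ by definition of virtual center; since for $f_\lambda \in \calf$ the derivative $|f'_\lambda|$ is small only deep inside an asymptotic tract (where $\sec^2 z^q \to 0$), at least one cycle point must escape to infinity along such a tract. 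The cycle equation together with the labeling forces this escaping point to be $a_n^{(m)}$, so $a_1^{(m)} = f_{\lambda_m}(a_n^{(m)}) \to v_{\lambda^*}$ by the definition of asymptotic value, and continuity of $f_\lambda$ off the polar set propagates this to $a_j^{(m)} \to f_{\lambda^*}^{j-1}(v_{\lambda^*})$ for $j = 1, \dots, n-1$. The requirement $a_n^{(m)} \to \infty$ then forces $f_{\lambda^*}^{n-2}(v_{\lambda^*})$ to be a pole of $f_{\lambda^*}$, giving $f_{\lambda^*}^{n-1}(v_{\lambda^*}) = \infty$ and hence $k \leq n$.

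The main obstacle is the reverse inequality $k \geq n$, that is, ruling out an intermediate iterate $f_{\lambda^*}^j(v_{\lambda^*}) = \infty$ with $j < n-1$ (which would make the "labeling forces $a_n^{(m)}$" step ambiguous since multiple cycle points would limit to $\infty$). This is handled by Corollary~\ref{pqrcomps}: the $2pq$ shell components whose boundary meets a virtual center parameter of order $k$ all have period exactly $k$, so the period $n$ of $\Omega$ must coincide with the virtual cycle order $k$ at $\lambda^*$, yielding $f_{\lambda^*}^{n-1}(v_{\lambda^*}) = \infty$ as required.
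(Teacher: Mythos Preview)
Your overall route differs from the paper's: rather than invoking Theorem~A and then trying to match the virtual-cycle order $k$ to the period $n$, the paper writes the multiplier explicitly as
\[
\prod_{i=0}^{n-1} f_\lambda'(z_i)=(2pq)^{n}\prod_{i=0}^{n-1}\frac{z_i^q}{\sin(2z_i^q)},
\]
observes that $\mu\to 0$ forces $|\Im z_0^q|\to\infty$ for the cycle point $z_0$ lying in an asymptotic tract, and then reads off directly that $z_1\to v_{\lambda^*}$ while $z_{n-1}^q\to k\pi+\pi/2$ (since $z_0=\lambda\tan^p z_{n-1}^q\to\infty$ with $\lambda$ bounded). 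Your second paragraph is essentially this computation in outline.

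The genuine gap is your final step. Corollary~\ref{pqrcomps} says only that at a virtual cycle parameter of order $k$ there \emph{exist} $2pq$ shell components of period $k$; it nowhere asserts that these are the \emph{only} shell components with virtual center $\lambda^*$, nor that your particular $\Omega$ is among them. So you cannot extract $n=k$ from it. The implication you actually need---that the period of a shell component equals the order of the virtual cycle at its virtual center---is precisely what this lemma is establishing (Theorem~B being the converse direction), and nothing proved earlier in the paper supplies it. The fix is to carry through the direct tracking argument of your second paragraph, as the paper does, rather than to reach for the corollary.
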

\begin{proof}
Without loss of generality, we assume $f_\lambda$ has an attracting periodic cycle of period $n$;   the proof is similar if $f_\lambda$ has one attracting cycle of period $2n$. Let $\{z_0, z_1, \cdots, z_{n-1}\}$ be the attracting cycle of $f_{\lambda}$, and suppose $z_0$ lies in an asymptotic tract. The multiplier of the cycle is \begin{equation*}
 \prod_{i=0}^{n-1}f_\lambda'(z_i)=\prod_{i=0}^{n-1}\frac{\lambda pqz_i^{q-1}\tan^{p-1}z_i^q}{\cos^2(z_i^q)}
=(2pq)^{n
}\prod_{i=0}^{n-1}\frac{z^q_i}{\sin (2z_i^{q})}
\end{equation*}
If the multiplier tends to $0$ as $\lambda\to \lambda^*$, then at least one of the  factors $z^q/\sin(2z^q)$ tends to $0$, which in turn implies that the imaginary part $z_0^q$ is unbounded. Since $z_0$ is in the asymptotic tract, it follows that as $\lambda\to \lambda^*$, $|\Im z_0^q|\to \infty$ and $z_1=\lambda \tan^p z_0^q\to v_{\lambda^*}$.   Because $z_0=\lambda \tan^p z_{n-1}^q$ and $\lambda$ is bounded, it follows that $z_{n-1}^q \to k\pi+\pi/2$  as $\lambda \to \lambda^*$
  and  $f^{n-1}_{\lambda^*}(v_{\lambda^*})=\infty$.
\end{proof}

\begin{lemma}\label{centers of cals2}
If $\Omega$ is a component of $\mathcal{S}_2$, then its virtual center is a solution of $v_\lambda^q=k\pi+\pi/2$ for some integer $k$.
\end{lemma}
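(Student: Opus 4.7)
The plan is to reduce the question to finding the poles of $f_\lambda$ by invoking the previously established characterization of virtual centers as virtual cycle parameters.

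First, I would apply Theorem~\ref{shell components}(c) (equivalently Theorem~A): the virtual center $\lambda^*$ of a shell component $\Omega \in \mathcal{S}_2$, if finite, is a virtual cycle parameter of order $2$. Unwinding the definition of a virtual cycle parameter of order $k$ with $k=2$, this means that $v_{\lambda^*}$ is a prepole of order $1$, i.e., $f_{\lambda^*}(v_{\lambda^*}) = \infty$. Equivalently, one can cite Lemma~\ref{virtcenter} directly with $n=2$, which gives $f^{n-1}_{\lambda^*}(v_{\lambda^*}) = f_{\lambda^*}(v_{\lambda^*}) = \infty$.

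Second, I would identify the poles of $f_\lambda$. Since $f_\lambda(z) = \lambda \tan^p z^q$ and $\lambda^* \neq 0$, the equation $f_{\lambda^*}(v_{\lambda^*}) = \infty$ is equivalent to $\tan(v_{\lambda^*}^q) = \infty$, which holds precisely when
\[ v_{\lambda^*}^q = k\pi + \pi/2 \]
for some integer $k$. This gives the desired characterization.

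There is really no main obstacle here beyond bookkeeping: the heavy lifting was done in Theorem~B and Lemma~\ref{virtcenter}, and specializing to period $2$ simply forces the asymptotic value itself (rather than some later iterate) to be a pole, which is elementary to solve. The only small subtlety is that Lemma~\ref{virtcenter} is stated under the hypothesis that the component is bounded; one should note that the conclusion we want is vacuous (or interpreted at infinity) for unbounded components, and substantive only when $\lambda^* \in \mathbb{C}$, in which case the hypothesis is satisfied by the definition of a finite virtual center.
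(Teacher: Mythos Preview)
Your argument has a genuine gap: you never show that the virtual center of a period-$2$ shell component is finite. The lemma's conclusion is that the virtual center $\lambda^*$ satisfies $v_{\lambda^*}^q = k\pi + \pi/2$; this is a statement about a finite point, and it is precisely what is invoked later in the proof of Theorem~C, where one reads ``By Lemma~\ref{centers of cals2}, $\Omega_{2,k,m}$ is bounded and has a virtual center \ldots''. So the real content of the lemma is the boundedness of $\mathcal{S}_2$ components, not the algebraic identification of the poles. Your final paragraph tries to declare the conclusion ``vacuous (or interpreted at infinity)'' for unbounded components, but that is not how the statement reads and not how it is used; if some $\Omega\in\mathcal{S}_2$ had its virtual center at infinity, the lemma would be false, not vacuously true.

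The paper's proof is almost entirely devoted to ruling out this case. It assumes $\Omega\in\mathcal{S}_2$ is unbounded, takes a path $\lambda(t)\to\infty$ along which the multiplier tends to $0$, and argues (via the explicit multiplier formula) that one periodic point $z_0$ lies deep in an asymptotic tract; then it shows the other point $z_1\asymp i^p\lambda$ is also forced into an asymptotic tract, which in turn forces $pq$ odd and the cycle to be a single period-$2$ orbit attracting both asymptotic values. By the definition of $\mathcal{S}_n$ when $pq$ is odd, such a component belongs to $\mathcal{S}_1$, a contradiction. Once boundedness is established, your steps (Lemma~\ref{virtcenter} with $n=2$, then solving $\tan(v_{\lambda^*}^q)=\infty$) finish the proof; but the boundedness argument is the substance, and it is missing from your proposal.
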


\begin{proof}

By Lemma~\ref{virtcenter}, we only need to show that each component of $\mathcal{S}_2$ is bounded.

Suppose there is an unbounded component $\Omega\in \mathcal{S}_2$ and $f_{\lambda}$ has a  periodic cycle  $\{z_1,z_0\}$.  We will show that if  $\Omega$ is unbounded then   both points  lie in asymptotic tracts, and therefore that $p$ and $q$ are both odd.   This in turn  implies that  $f_\lambda$ has only one attracting cycle of period $2$ so that  $\Omega\in \mathcal{S}_1$ and not in  $\Omega\in \mathcal{S}_2$.

Let $\lambda(t) \to \lambda^*$, $t \to \infty$ be a path in $\Omega$ such that the multiplier of the periodic cycle $\mu(\lambda(t)) \to 0$.  If $\Omega$ is unbounded,   the multiplier of the periodic cycle has absolute value $1$ for any finite point on the boundary.  Therefore,  ``virtual center'' $\lambda^*$ of $\Omega$ is at infinity. From the proof of  Lemma \ref{virtcenter},  the point $z_0(\lambda(t))$ of the cycle  must be in the asymptotic tract and $|\Im z_0(\lambda(t))^q|\to \infty$ and thus $z_1(\lambda(t))=\lambda \tan^pz_0(\lambda(t))^q \asymp i^p \lambda$ is also goes to infinity.

Suppose $z_1(\lambda(t))^q_1=x_1(t)+iy_1(t)\asymp i^{pq}\lambda(t)^q$.  We claim that if $y_1(t)$  goes to infinity, then $z_1(\lambda(t))$ is also in an asymptotic tract.  If it is not, then $x_1(t)\to \infty $ but $y_1(t)$ stays bounded. Take a sequence  $z_1(t_k)$ such that $x_1(t_k)=k\pi$. Then
\[
\tan (x_1(t_k)+iy_1(t_k))=\tan(iy_1(t_k))=i\tanh (y_1(t_k))=B_ki,
\]  where $B_k$ is real and bounded.

Since $z_0(t_k) =\lambda \tan^p (z_1(t_k))^q=\lambda (B_ki)^p$,   $z_0(t_k)^q/z_1(t_k)^q=B_k^{pq}\in\mathbb{R}$;  that is, $z_0(t_k)$ is asymptotically parallel to $z_1(t_k)$ with a bounded ratio so that the imaginary part of $z_0^q(t)$ remains bounded.
This contradicts our assumption that $z^q_0(t)$ is unbounded, so it follows that $\Omega$ is bounded as claimed.
\end{proof}
\section{Theorem~C}\label{thmC}

We now have all the ingredients to prove Theorem~C.

\begin{thmC} All the components of $\cals_n$, $n>1$ are bounded.
\end{thmC}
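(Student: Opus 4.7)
The plan is to extend the proof of Lemma~\ref{centers of cals2} from period two to arbitrary period $n\geq 2$. Suppose, for contradiction, that some $\Omega\in\cals_n$ with $n\geq 2$ is unbounded. By Theorem~\ref{shell components}, the multiplier map $\mu:\Omega\to\D^*$ is a universal covering and the unique virtual center of $\Omega$ lies on $\partial\Omega$; unboundedness forces this virtual center to sit at infinity. Pick a path $\lambda(t)\to\infty$ in $\Omega$ along which $\mu(\lambda(t))\to 0$, and let $\{z_0(t),\dots,z_{n-1}(t)\}$ be the attracting cycle. The multiplier formula
\[
\mu(\lambda)=(2pq)^n\prod_{i=0}^{n-1}\frac{z_i^q}{\sin(2z_i^q)}
\]
forces some $|\Im z_i^q(t)|\to\infty$; after relabeling take $i=0$, so that $z_0$ lies in an asymptotic tract and $z_1=\lambda\tan^p z_0^q\asymp \pm i^p\lambda=\pm v_\lambda\to\infty$.

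The crux is to show inductively that \emph{every} $z_j(t)$ eventually lies in an asymptotic tract. Assume $z_j$ does, so $z_{j+1}\asymp \pm i^p\lambda$ and $z_{j+1}^q\asymp \pm i^{pq}\lambda^q$, and suppose for contradiction that $\Im z_{j+1}^q$ remains bounded along some subsequence. This forces $\lambda(t)$ to accumulate on one of the separating rays of Lemma~\ref{seplines}. Passing, as in Lemma~\ref{centers of cals2}, to a further subsequence $t_k$ with $\Re z_{j+1}^q(t_k)\in\pi\Z$, we have $\tan z_{j+1}^q(t_k)=B_k i$ with $B_k$ real and bounded, so $z_{j+2}(t_k)=\lambda(t_k)(B_k i)^p$ and $z_{j+2}^q(t_k)$ is a real multiple of $\lambda^q(t_k)$. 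Iterating this observation around the remaining cycle relations $z_{j+3}=f(z_{j+2}),\dots,z_0=f(z_{n-1})$, and at each step extracting a common refinement of subsequences so as to keep the intermediate tangent values bounded, each subsequent $z_m^q(t_k)$ must likewise be a real multiple of $\lambda^q(t_k)$. But then $\Im z_0^q(t_k)$ is bounded, contradicting $|\Im z_0^q(t)|\to\infty$.

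Once every $z_j$ lies in an asymptotic tract we have $z_j\asymp \pm v_\lambda$ for every $j$, so the cycle collapses at leading order to a self-map of the finite set $\{+v_\lambda,-v_\lambda\}$. This leading-order map has period $1$ when $p$ is even and period at most $2$ when $p$ is odd---exactly the configurations that, by the conventions of Section~\ref{tanpq}, define components of $\cals_1$. Thus $\Omega\in\cals_1$, contradicting $n\geq 2$. The main technical obstacle is the inductive step in the middle paragraph: the single-step ratio estimate of Lemma~\ref{centers of cals2} must be carried all the way around the cycle, which requires nesting subsequences to keep every intermediate $\tan^p z_m^q$ simultaneously controlled, a bookkeeping task complicated by the need to distinguish the various parities of $p$ and $q$.
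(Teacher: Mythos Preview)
Your route is quite different from the paper's. Rather than extending Lemma~\ref{centers of cals2} analytically to all $n$, the paper uses the period-$1$ and period-$2$ results already in hand to build a topological fence: it takes parabolic boundary points (multiplier $-1$) on each of the $2q$ unbounded $\cals_1$ components, passes through the bounded period-$2$ bud components tangent there to their virtual centers on the rays of Lemma~\ref{seplines}, and continues into the adjacent $\cals_1$ component, producing a nested family of simple closed curves $\gamma_m$ about the origin. Every shell component of period $>1$ other than these buds is disjoint from all the $\gamma_m$ and hence trapped inside one of them. So only the $n=2$ case of the analytic argument is ever needed.

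The inductive step you outline has a genuine gap, not just bookkeeping. In Lemma~\ref{centers of cals2} the single free real parameter $t$ is spent forcing $\Re z_1^q(t_k)=k\pi$; this is precisely what makes $\tan z_1^q$ \emph{pure imaginary}, which in turn makes $z_0^q/z_1^q$ real and yields the bounded-imaginary-part contradiction. For $n>2$ you would need the analogous purity at every intermediate step $m=j+1,\dots,n-1$, i.e.\ $\Re z_m^q\in\pi\Z$ simultaneously for all $m$, but with only one parameter you cannot impose $n-1$ independent congruences. If some $\tan z_m^q$ is merely a bounded generic complex number, then $z_{m+1}^q=\lambda^q(\tan z_m^q)^{pq}$ is a generic complex multiple of $\lambda^q$ and its imaginary part need not stay bounded; and if $\Re z_m^q$ drifts near a pole of $\tan$ along your already-fixed subsequence, the chain breaks entirely. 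The closing claim is also not airtight: knowing each $z_j\asymp\pm v_\lambda$ at leading order does not by itself force the period of the actual cycle to drop for finite $\lambda$, so the conclusion $\Omega\in\cals_1$ needs more than the leading-order self-map of $\{\pm v_\lambda\}$.
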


\begin{proof}
We proved in Theorem~\ref{comps of cals1}  and Lemma~\ref{seplines} that for the
  family $\mathcal{F}_\lambda$,   $\mathcal{S}_0$, consists of   $2q$ unbounded components, $\{ \pm \Omega_1, \cdots, \pm \Omega_{q} \}$.  These are symmetric about the rays $\overline{\R}=\{ \pm r \eta_k,  r> r_0\}_{k=1}^q$ and separated by the rays $\overline\R'=\{\pm r \omega_k, r> 0 \}_{k=1}^{q}.$

  The boundary of each $\Omega_k$ is an analytic curve defined by the relation $|f_{\lambda}(z_0)|=1$ where $z_0$ is the fixed point or, if $pq$ is odd and there is a single period $2$ cycle, $\{z_0,z_1\}$, the relation $|f_{\lambda}(z_0)f_{\lambda}'(z_1)|=1$.   We assume now that there are two fixed points.   The discussion for the period $2$ cycle is essentially the same.   Along  $\partial \Omega_k$ there is a sequence of points $\nu_{k,m}$ where $f_{\lambda}(z_0)=-1$ and the fixed points have become parabolic.   There is a standard bifurcation at each $\nu_{k,m}$ where a new cycle of period two appears and so there is a bud component $\Omega_{2,k,m}$ of $\cals_2$ tangent to $\Omega_k$ there.  By Lemma~\ref{centers of cals2},  $\Omega_{2,k,m}$ is bounded and has a virtual center, say $s_{m,k}$ on the ray in $\overline{\R}$ separating $\Omega_{k}$ from the next one in order around the origin; for argument's sake assume it is  $-\Omega_{k+1}$.

If we now look at the points on the boundary of $-\Omega_{k+1}$, there is a sequence $\nu_{k+1,m'}$ where the multiplier of the cycle is $-1$ and there is a bud component $\Omega_{2,k+1,m'}$ with virtual center $s_{m',k+1}$ on the same ray of  $\overline{\R}$  separating $\Omega_k$ and $-\Omega_{k+1}$.   Choose $\nu_{k+1,m'}$ so that $s_{m',k+1}=s_{m,k}$.    We can do this since the boundaries of $\Omega_k$ and $-\Omega_{k+1}$ are both asymptotic to the ray containing the centers.

At $s_{m,k}$ there are $2pq$ shell components and $\Omega_{2,k,m}$ and $W_{2,k+1,m'}$ are two of them.  Now we draw a curve in $\Omega_{2,k,m}$ from $\nu_{k,m}$ to $s_{m,k}$ and another from $s_{m,k}$
to $\nu_{k+1,m'}$ in $\Omega_{2,k+1,m'}$.   We then choose another parabolic point $\nu_{k+1,m''}$ on the boundary of $-\Omega_{k+1}$ whose bud component $\Omega_{2,k+1,m''}$ has a center $s_{m'',k+1}$ on the next ray in order around the origin.  We draw a simple curve in $-\Omega_{k+1}$ from $\nu_{k+1,m'}$ to $\nu_{k+1,m''}$.  Continuing around,  in the next component $\Omega_{k+2}$,  we find the bud component that shares $s_{m'',k+1}$ as its center and draw a simple curve through the bud components joining $-\Omega_{k+1}$ and $\Omega_{k+2}$.   We continue in this manner until we get back around to the original $\Omega_k$.   We join all the curves in the $\pm \Omega_k$ and the bud components between them.   The result, $\gamma_m$ is a simple closed around the origin.

In this way, choosing the $\nu_{k,m}$ carefully and systematically, we can create a nested sequence of curves $\gamma_m$ around the origin.   Any other component of $\cals$ is disjoint from the components $\pm \Omega_k$ and the bud components $\Omega_{2,k,m}$ and so must lie inside one of the $\gamma_m$ and is therefore bounded.

\end{proof}
An immediate corollary of the above proof is
\begin{cor}
All the capture components in the dynamically natural slice are bounded.
\end{cor}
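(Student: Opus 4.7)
The plan is to reuse the nested sequence of Jordan curves $\gamma_m$ constructed in the proof of Theorem~C, and simply observe that capture components must sit inside them for the same reason as the higher-period shell components do. First I would recall the key structural fact from that proof: for each $m$, the curve $\gamma_m$ is built as a concatenation of arcs lying in the closures of the unbounded period-one shell components $\pm\Omega_k$ and of the (bounded) period-two bud components $\Omega_{2,k,m}$ tangent to them at parabolic bifurcation points. Thus $\gamma_m$ is contained entirely in the union of finitely many shell components from $\cals_1 \cup \cals_2$, and the $\gamma_m$ nest around the origin with the bounded complementary disks exhausting $\CC$.

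Next, I would invoke the definitions of the components of $\Lambda \setminus \calb_\Lambda$. A capture component $\Omega$ is a region where $v_\lambda$ is attracted to the super-attracting cycle $\{0\}$, which simultaneously attracts the fixed critical value. By definition of a shell component, this precludes $\Omega$ from being a shell component, and distinct components of the stable locus are disjoint. In particular $\Omega$ is disjoint from every $\pm\Omega_k \in \cals_1$ and from every bud $\Omega_{2,k,m} \in \cals_2$. Consequently $\Omega \cap \gamma_m = \emptyset$ for every $m$.

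Since $\Omega$ is open and connected and avoids each $\gamma_m$, it must lie in a single connected component of $\CC \setminus \gamma_m$. If $\Omega$ were unbounded, it would intersect the unbounded complementary region of every $\gamma_m$, hence could not be contained in any fixed bounded disk; but by nesting, the bounded complementary disks of $\gamma_m$ exhaust $\CC$, so the only way $\Omega$ can avoid all $\gamma_m$ is to be contained in the bounded disk bounded by $\gamma_{m_0}$ for some $m_0$. Therefore $\Omega$ is bounded, which is the desired conclusion.

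The only real point that requires care is checking that a capture component cannot somehow escape to infinity through the ``channels'' between successive $\pm\Omega_k$, i.e.\ that the $\gamma_m$ indeed seal off unbounded escape routes. This is exactly the content of the construction in Theorem~C, where consecutive arcs are joined through the bud components straddling the separating rays of $\overline{\calr}$, so no additional work is needed once Theorem~C has been established; the corollary is a formal consequence.
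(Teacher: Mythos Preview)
Your argument is correct and is precisely the approach the paper has in mind: the paper states the corollary as ``an immediate corollary of the above proof'' without further elaboration, and your write-up simply spells out that a capture component, being a stable component distinct from every shell component in $\cals_1\cup\cals_2$ (and from the bifurcation points joining them), avoids each $\gamma_m$ and is therefore trapped inside one of them. The only cosmetic point is that $\gamma_m$ lies in the \emph{closures} of those shell components (it passes through parabolic points and virtual centers), which you note at first but drop in the next sentence; this does not affect the argument since capture components are open and disjoint from the bifurcation locus as well.
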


\vspace*{20pt}
\noindent Tao Chen, Department of Mathematics, Engineering and Computer Science,
Laguardia Community College, CUNY,
31-10 Thomson Ave. Long Island City, NY 11101.\\
\noindent Email: tchen@lagcc.cuny.edu

\vspace*{5pt}
\noindent Linda Keen, Department of Mathematics, and  CUNY Graduate
School, New York, NY 10016, and Department of Mathematics, Lehman College of CUNY,
Bronx, NY 10468 \\
\noindent Email: LINDA.KEEN@lehman.cuny.edu, linda.keenbrezin@gmail.com.

\end{document}